\pgfplotsset{compat=1.18}
\newtheorem{theorem}{Theorem}[section]
\newaliascnt{lemma}{theorem}
\newtheorem{lemma}[lemma]{Lemma}
\newaliascnt{proposition}{theorem}
\newtheorem{proposition}[proposition]{Proposition}
\newaliascnt{corollary}{theorem}
\newtheorem{corollary}[corollary]{Corollary}
\theoremstyle{definition}
\newaliascnt{definition}{theorem}
\newtheorem{definition}[definition]{Definition}
\newaliascnt{assumption}{theorem}
\newtheorem{assumption}[assumption]{Assumption}
\newaliascnt{example}{theorem}
\newtheorem{example}[example]{Example}
\theoremstyle{remark}
\newaliascnt{remark}{theorem}
\newtheorem{remark}[remark]{Remark}
\DeclareMathOperator*{\argmin}{arg\,min}
\newcommand{\R}{\mathbb{R}}
\newcommand{\N}{\mathbb{N}}
\newcommand{\Pa}{P_a}
\newcommand{\Po}{P_{\mathrm{surf}}}
\newcommand{\Pinit}{P^{0}}
\newcommand{\wt}{\widetilde}
\newcommand{\calG}{\mathcal{G}}
\newcommand{\calZ}{\mathcal{Z}}
\newcommand{\tO}{\widetilde{O}}
\newcommand{\ppOtwo}{\mathrm{ppO_2}}
\newcommand{\END}{\mathrm{END}}
\newcommand{\Ntwo}{\mathrm{N_2}}
\newcommand{\He}{\mathrm{He}}
\newcommand{\Otwo}{\mathrm{O_2}}
\crefname{assumption}{Assumption}{Assumptions}
\Crefname{assumption}{Assumption}{Assumptions}
\crefname{definition}{Definition}{Definitions}
\Crefname{definition}{Definition}{Definitions}
\crefname{theorem}{Theorem}{Theorems}
\Crefname{theorem}{Theorem}{Theorems}
\crefname{lemma}{Lemma}{Lemmas}
\Crefname{lemma}{Lemma}{Lemmas}
\crefname{proposition}{Proposition}{Propositions}
\Crefname{proposition}{Proposition}{Propositions}
\crefname{corollary}{Corollary}{Corollaries}
\Crefname{corollary}{Corollary}{Corollaries}
\crefname{example}{Example}{Examples}
\Crefname{example}{Example}{Examples}
\crefname{remark}{Remark}{Remarks}
\Crefname{remark}{Remark}{Remarks}
\title{The Algorithmic Geometry of Decompression Schedules\thanks{This paper studies mathematical properties of a stylised model; it is not a clinical or diving safety guideline.}}
\author{
  Benjamin Marsh\\
  \small Macrops\\
  \small ben@macrops.org
}
\date{July 2026}
\begin{document}
\maketitle

\begin{abstract}
We study decompression planning as a hybrid optimal control problem with ppO$_2$ and equivalent narcotic depth feasibility, affine tissue ceilings, convex oversaturation penalties, and an optional post-surface functional. A continuous rearrangement theorem derives monotone ascent for terminal state objectives, and a safe off-gassing exchange orders constant dwell blocks in a
local running stress regime.  In the unrestricted bidirectional extension, where excursions deeper than the starting depth are admitted, an explicit one compartment instance with integrated oversaturation makes a re-descent strictly better than every monotone profile.  The full problem is therefore optimal control under an operational monotone ascent constraint. In the aggregate inert gas model without cumulative oxygen exposure, the minimum inert feasible gas dominates every pure or relaxed gas policy along each fixed depth path, giving exact pure attainment with finitely many switches.  A CNS/OTU type oxygen budget is precisely the path coupling state that restores nontrivial gas choice and can prevent zero inert hold consolidation.  We derive its oxygen shadow price and gas switching inequality. Maximal ascents and holds are dense, and an explicit risk price threshold makes the no-stop ascent globally optimal.  For fixed stops, an exact dwell switching identity permits one forward state sweep and one backward adjoint sweep, purely on-gassing holds are dominated, and the one compartment problem has closed form scalarised and capped solutions. For saturation decompression we reduce rectangular uncertainty to one deterministic worst case endpoint, finite bounce exposures exhibit the complementary phenomenon that the worst rate can be unique and strictly interior, with a closed form two level location and finitely many critical rates for an arbitrary segmented terminal input.  Algorithmically, clipped oversaturation is not a Markov state, whereas full tissue vectors support safe dominance and one sided enclosures certify an exact hard cap.  The compartment generated finite menu problem is NP-hard with one tissue, and a bounded horizon instance has $2^m$ exact nondominated labels.  Nevertheless, for fixed tissue dimension and polynomial conditioning, upper state compression gives an FPTAS for scalarisation, risk repair gives the exact cap counterpart.  A frontier conjugacy theorem finally characterises exactly which capped points scalarisation recovers.  Worked examples reproduce the dwell law, saturation endpoint equality, and realised time/risk frontier.
\end{abstract}

\section{Introduction}

A decompression plan specifies how a diver should ascend from depth, possibly pausing at discrete stops and switching gases, so as to control risk while minimising unnecessary total decompression time. Classical models, from Haldane's staged decompression and Workman's linear ``$M$ values''\cite{Baker1998} to B\"uhlmann's ZH-series and Thalmann's VVal-18, idealise tissues as first order perfusion compartments that approach an inspired inert partial pressure and impose affine depth dependent ceilings on tolerated supersaturation \cite{Boycott1908,Workman1965,Buehlmann1984,USNManualRev7}. Practical implementation overlays additional feasibility constraints such as limits on oxygen partial pressure (ppO$_2$) to avoid toxicity and bounds on the equivalent narcotic depth (END). END conventions differ in whether oxygen is treated as narcotic, both interpretations appear in agency guidance \cite{USNManualRev6,USNManualRev7,NOAADivingManual6}.

We formalise this planning problem on continuous depth trajectories $z(\cdot)$ and measurable gas choices $g(\cdot)$. The model includes ppO$_2$ and END feasibility, affine ceilings $M_i(z)=a_i+b_i\Pa(z)$, and convex penalties in normalised oversaturation. A terminal functional closes the objective at surfacing. Its canonical form is the risk accumulated during a fixed post-surface observation period. Without this term, a shallow stop can be rejected merely because most of its benefit occurs after the optimisation horizon. Three distinctions are essential. First, a relaxed gas control is not a source of extra power in the unbudgeted aggregate inert gas model. The minimum inert feasible gas dominates pointwise, so the apparent gas combinatorics collapse. This degeneracy identifies cumulative oxygen exposure as the operational resource missing from the base model. Second, monotone ascent is derived by rearrangement for terminal loading under a no deeper than start condition, but fails in an unrestricted bidirectional integrated stress extension and the operational constraint and the mathematical conclusion are therefore kept distinct. Third, the exact finite menu combinatorics do survive compartment generated increments, even though fixed dimensional state rounding admits a fully polynomial approximation. Maximal rate ascent plus holds supplies the dense normal form linking the continuous and finite problems.

\paragraph{Contributions.}
The paper establishes the following results.
\begin{enumerate}[leftmargin=2em,label=(\roman*)]
\item The gas envelope theorem gives exact pure gas attainment, finite switching, and a one gas per depth reduction in the unbudgeted aggregate model. It shows exactly why gas allocation is trivial without a cumulative oxygen resource. Adding a CNS/OTU type dose state restores an intertemporal tissue loading/exposure trade off, while the zero inert consolidation theorem identifies where the omission becomes visible. A normal maximum principle calculation gives the oxygen shadow price and a sharp two gas switching inequality.
\item A safe off gassing exchange theorem orders adjacent dwell blocks in the fixed block permutation problem.  A continuous decreasing rearrangement theorem derives monotone ascent globally for terminal state objectives when the initial depth remains maximal, while an explicit value gap construction proves failure in an unrestricted bidirectional integrated stress extension.  Within the operationally constrained model, a constructive pulse width argument proves density of maximal ascents and holds, and $\lambda\le1/B$ gives a global no-stop phase.
\item For fixed stops, an exact dwell switching identity separates instantaneous dwell risk, future risk reduction, and the time price. One forward and one backward sweep compute all dwell derivatives. Pure on gassing holds are dominated, and a one compartment problem admits closed forms for both scalarised and capped dwell times.
\item For saturation decompression, rectangular uncertainty in initial tissue state, rate constants, inspired pressure, and ceilings reduces to one deterministic worst case endpoint instance, a multigas equilibrium calculation verifies the continuum equality numerically. Finite bounce exposures generally fall outside the required off gassing ordering. For a two level bounce we locate the unique interior worst case rate in closed form and identify the exact transition back to endpoint monotonicity. More generally, an $m$ segment terminal pressure calculation has at most $m-1$ critical rates.
\item Clipped oversaturation is not a sufficient Markov state, even for two labels whose clipped load is identically zero; exact propagation requires the full tissue vector or full normalised tension. Componentwise tissue dominance is safe, and one sided tissue and risk enclosures give an exact hard cap certificate at every grid resolution.  With actual one compartment flows, the capped finite menu problem is NP-hard and an exact layer can contain $2^m$ nondominated labels even on a bounded horizon.  For fixed $n$ and polynomial conditioning, upper state rounding nevertheless yields an FPTAS for scalarisation, and risk repair yields an exact cap FPTAS.  The product grid's $K^n$ dependence keeps the variable dimension boundary explicit.
\item A frontier conjugacy theorem characterises exactly when a capped solution is recovered by $T+\lambda R$ and identifies the deterministic duality gap. The analytic example and a multi-compartment worked instance make the switching law, terminal closure, and realised time/risk frontier explicit.
\end{enumerate}

The analysis exposes optimisation structure shared by table based and algorithmic decompression models, including B\"uhlmann style implementations with gradient factors \cite{Pollock2015GF,Baker1998}.

\subsection{Notation, units, and abbreviations}\label{subsec:notation}
We measure depth $z$ in meters (m), time $t$ in minutes (min), and pressures in bar (absolute). Ambient pressure is $\Pa(z)=\Po+\gamma z$ and inspired partial pressures use $\Pa(z)-w$ (alveolar water vapour offset). Any consistent absolute pressure unit can be used; only ratios such as partial pressure fractions and affine ceilings matter in the analysis.

\begin{table}[H]
\centering
\caption{Main symbols and units.}\label{tab:notation}
\begin{tabularx}{\linewidth}{@{}l l X@{}}
\toprule
Symbol & Unit & Meaning \\
\midrule
$z$ & m & depth, $z=0$ at surface \\
$z_{\mathrm{exit}}$ & m & terminal depth below which ascent is uninterrupted \\
$\dot z_{\max}$ & m/min & ascent rate cap \\
$\Pa(z)$ & bar & ambient pressure at depth $z$ \\
$\Po$ & bar & surface ambient pressure \\
$\gamma$ & bar/m & pressure gradient in water \\
$w$ & bar & alveolar water vapour offset \\
$g\in\mathcal G$ & -- & breathing gas (finite alphabet) \\
$F_{O_2}(g),F_{N_2}(g),F_{He}(g)$ & -- & gas fractions (sum to $1$) \\
$F_I(g)$ & -- & inert fraction $F_{N_2}(g)+F_{He}(g)$ \\
$\ppOtwo(g,z)$ & bar & alveolar oxygen partial pressure \\
$C(t),\overline C$ & dose & cumulative oxygen exposure and its optional cap \\
$\chi(\ppOtwo)$ & dose/min & depth sensitive oxygen dose rate in the extension \\
$\mu^\star$ & risk/dose & oxygen shadow price on a fixed depth path \\
$\END(g,z)$ & m & equivalent narcotic depth (END) \\
$n$ & -- & number of compartments \\
$i\in\{1,\dots,n\}$ & -- & compartment index \\
$\tau_i$ & min & half time of compartment $i$ \\
$k_i$ & min$^{-1}$ & rate $k_i=\ln 2/\tau_i$ \\
$P_i(t)$ & bar & tissue inert gas pressure \\
$P_\infty(g,z)$ & bar & inspired inert gas driving pressure \\
$M_i(z)$ & bar & tolerated ceiling (affine ``$M$ value'') \\
$S_i(t)$ & -- & normalised oversaturation $(P_i-M_i(z))_+/M_i(z)$ \\
$\phi_i(\cdot)$ & risk/min & convex penalty (instantaneous risk rate) \\
$R(\pi)$ & risk & running risk plus terminal functional $\Psi(P(T))$ \\
$\Psi(P(T))$ & risk & optional terminal or post-surface risk functional \\
$T(\pi)$ & min & total decompression time \\
\bottomrule
\end{tabularx}
\end{table}

In this work, END denotes equivalent narcotic depth, DPP denotes dynamic programming principle, DAG denotes directed acyclic graph, and FPTAS denotes fully polynomial time approximation scheme. Throughout, $\varepsilon>0$ denotes an arbitrary accuracy tolerance used in approximation statements.

\section{Background and related work}\label{sec:related}
Staged decompression originates with Haldane's dissolved gas hypothesis and early table work \cite{Boycott1908}. Workman's $M$ values formalised tolerated supersaturation as an affine function of ambient pressure \cite{Workman1965}, and B\"uhlmann's ZH models extended these compartmental dynamics and remain the dominant basis of modern recreational and technical decompression implementations \cite{Buehlmann1984}. For piecewise constant or ramped depth segments, implementations commonly use closed form solutions (for example Schreiner type ramp solutions) to update tissue tensions efficiently \cite{SchreinerKelley1971,HamiltonThalmann2003}, the structural arguments below use only stability of the underlying ODE flows, not any particular closed form update formula. Thalmann's VVal-18 and related approaches underlie U.S. Navy air and mixed gas tables \cite{Thalmann1986,GerthDoolette2007,USNManualRev7}. Operational planning overlays feasibility constraints that are not part of the original dissolved gas compartment model. Oxygen partial pressure limits to mitigate toxicity and inert/narcotic exposure limits, often implemented through the END proxy \cite{USNManualRev6,USNManualRev7,NOAADivingManual6}. Different communities treat $O_2$ as narcotic or non-narcotic for END calculations, our parameter $\eta\in[0,1]$ makes this convention explicit and keeps feasibility (ppO$_2$/END windows) separate from the optimisation objective. Today's decompression schedules are often produced by dive computers implementing dissolved gas models with additional heuristics such as gradient factors (GF). Empirical and operational questions then shift from ``does the model exist?'' to ``what optimisation problem is the implementation implicitly solving, and what are the consequences of discrete constraints and device settings?'' Recent work has examined algorithmic decompression implementations and their behaviour \cite{Fraedrich2018COTS,DeRidder2023GF,Angelini2022Ceiling}. Optimisation of ascent profiles is itself well established. Lewis derived an optimal ascent rate in an early compartment model \cite{Lewis1983OptimalDecompression}, Horn optimised stop times at fixed modelled illness risk \cite{Horn2003Optimization,Horn2006Isoprobabilistic}; and nonlinear optimal control, multiparametric programming, and receding horizon decompression were developed by Feng, Gutvik, Johansen, and coauthors \cite{Feng2009Barrier,Feng2010Multiparametric,Gutvik2011Optimal,Feng2012RecedingHorizon}. Probabilistic real time schedule updating also predates the present work \cite{Survanshi1996RealTime}, as do shortest path calculations at a target risk and recent adaptive ascent algorithms \cite{Murphy2017Dissertation,DiMuro2023Adaptive}. Our contribution is a rigorous structural and approximation boundary for the stated proxy model, exact unbudgeted gas menu reduction, derived and impossible monotonicity regimes, dwell sensitivities, model generated finite menu hardness, an exact label lower bound, and a fixed compartment FPTAS with conservative cap certificates. A recurring debate concerns how decompression time should be distributed between deeper and shallower stops (``deep stops'')\cite{UHMSDeepStop2008}. Evidence from controlled evaluations has shown that redistributing decompression time from shallow to deeper stops can increase decompression sickness incidence in certain air diving profiles \cite{Doolette2011NEDU}. Modern reviews emphasise that decompression illness risk is multifactorial and that no single scalar objective universally captures all operational trade offs \cite{Mitchell2024DCI}.

Finite operational menus, switching budgets, and other mutually exclusive resource choices can make the attainable $(T,R)$ set nonconvex, in which case scalarisations recover only supported efficient points. The aggregate unbudgeted base model is simpler, its gas envelope theorem removes gas choice as a source of nonconvexity. A cumulative oxygen exposure state restores that choice. We therefore distinguish the continuous compartment frontier from arbitrary menu geometry. Risk can be represented deterministically or probabilistically. Probabilistic models have been developed to predict decompression sickness probabilities, and even severity strata, from exposure features \cite{Horn2006Isoprobabilistic,Howle2017Trinomial,Schirato2026GradientOptimization}. Convex penalties in normalised oversaturation provide a transparent mathematical proxy compatible with optimisation and yield structural results such as dominance, exact dwell marginals, and certified discretisation error.

While this paper focuses on a stylised perfusion limited compartment model with affine ceilings, the same algorithmic and feasibility issues arise in saturation and mixed gas operations. Recent reviews of commercial saturation decompression procedures highlight both procedural commonalities and empirical adaptations in the absence of abundant modern controlled trials \cite{Imbert2024Saturation}. The endpoint uncertainty theorem in \cref{sec:robust} is targeted precisely at the saturated initial equilibrium, it is not asserted for a finite bounce exposure. We complement it with a bounce calculation, even one bottom step followed by one lower input step can have a unique interior worst case rate, while a general segmented terminal input admits a finite critical rate enumeration. This reinforces the value of separating feasibility and operational constraints from the optimisation and computational structure. Casting decompression planning as optimal control \cite{Pontryagin1962,Liberzon2012,Betts2010} is natural because the system is genuinely dynamic, controls are hybrid, and feasibility constraints couple tissue states, depth, and gas choices. The control viewpoint supplies a constructive staged density theorem, dwell adjoints, and dynamic programming principles that justify the finite grid algorithms analysed below.

\subsection{Physical background}\label{subsec:physics}
Decompression planning is driven by how inert gases dissolve into and are eliminated from tissues as ambient pressure changes during descent and ascent \cite{HamiltonThalmann2003}. At depth $z$, ambient pressure is well approximated by a linear relation
\[
\Pa(z)=\Po+\gamma z,
\]
with $\Po$ the surface pressure and $\gamma$ the hydrostatic pressure gradient \cite{USNManualRev7,NOAADivingManual6}. A breathing mix $g$ with fraction $F_s(g)$ of gas species $s$ produces an inspired partial pressure proportional to ambient pressure. In alveolar models a constant water vapour pressure offset $w$ is subtracted from ambient pressure before taking fractions, reflecting humidification of inspired gas \cite{StatPearlsAlveolar2024}. In this paper we therefore use
\[
P_{\infty}(g,z)=F_I(g)\bigl(\Pa(z)-w\bigr)
\]
as the inspired inert gas driving pressure for the total inert fraction $F_I(g)=F_{N_2}(g)+F_{He}(g)$.

A standard dissolved gas approximation models each tissue compartment as a first order system that exponentially approaches the inspired inert gas pressure \cite{Workman1965,Buehlmann1984,HamiltonThalmann2003}. For compartment $i$ with rate constant $k_i=\ln 2/\tau_i$ (half time $\tau_i$),
\[
\dot P_i(t)=k_i\bigl(P_\infty(g(t),z(t))-P_i(t)\bigr).
\]
During ascent, ambient pressure decreases; if a tissue inert pressure $P_i$ exceeds what is tolerated at the current depth, the tissue is supersaturated. Classical table algorithms express tolerance via an affine ceiling (an ``$M$-value'')
\[
M_i(z)=a_i+b_i\Pa(z),
\]
with supersaturation measured relative to this limit \cite{Workman1965,Buehlmann1984}. We encode decompression stress using the normalised oversaturation
\[
S_i(t)=\max\!\left\{0,\frac{P_i(t)-M_i(z(t))}{M_i(z(t))}\right\},
\]
and accumulate a convex penalty in $S_i$ as an abstract risk score. This is a mathematical proxy but it is intended to support optimisation structure and algorithms, not to claim a calibrated injury model. Because tissue state does not cease to matter at first surfacing, we also allow a componentwise nondecreasing terminal functional. A fixed post-surface observation window is represented exactly by
\cref{prop:surface-tail}.

Operationally, not every gas can be breathed at every depth. Oxygen partial pressure is constrained to avoid hypoxia at shallow depths and oxygen toxicity at depth \cite{USNManualRev7,NOAADivingManual6}. Narcotic exposure is often limited using END, defined as the depth in air whose narcotic partial pressure matches that of the chosen gas at depth $z$ \cite{USNManualRev6,USNManualRev7}. Different communities treat oxygen as narcotic or non-narcotic for END calculations; we explicitly parameterise this convention using $\eta\in[0,1]$ in the narcotic fraction $F_{\mathrm{nar}}(g)=F_{N_2}(g)+\eta F_{O_2}(g)$.

Because $P_i$ changes smoothly and, under a rate cap, depth changes are piecewise Lipschitz, it is natural that optimal policies often take the familiar staged form. Maximal rate ascents between a finite set of constant depth holds, with possible gas switches at holds. A major goal of the paper is to formalise the extent to which this staged structure is justified in the stated model.

\section{Model and well posedness}

Let the state be
\[
x(t)=(z(t),P(t))\in X:=[0,Z_{\max}]\times\R_+^n,
\]
where $z(t)$ is depth in meters and $P(t)=(P_1(t),\dots,P_n(t))$ are inert gas tissue pressures in $n$ perfusion limited compartments. Controls are the vertical rate and gas choice
\[
u(t)=(v(t),g(t)),
\]
with the monotone ascent convention
\[
v(t)=\dot z(t)\in[-\dot z_{\max},0],\qquad g(t)\in\mathcal G(z(t))\quad\text{a.e.}
\]
The feasible gas correspondence $\mathcal G(\cdot)$ enforces ppO$_2$ and END windows. Depth evolves as $\dot z(t)=v(t)$ and each compartment satisfies
\[
\dot P_i(t)=k_i\bigl(P_\infty(g(t),z(t))-P_i(t)\bigr),\qquad i=1,\dots,n.
\]
Ceilings are affine in ambient pressure,
\[
M_i(z)=a_i+b_i\Pa(z),
\]
and the normalised oversaturation is
\[
S_i(t)=\max\!\left\{0,\frac{P_i(t)-M_i(z(t))}{M_i(z(t))}\right\}.
\]
For $\lambda>0$, define the running cost
\[
\ell(z,P,g):=1+\lambda\sum_{i=1}^n\phi_i\bigl(S_i(z,P)\bigr),
\]
so the scalarised free terminal time optimal control problem is
\begin{equation}\label{eq:ocp-scalar}
\begin{aligned}
(\mathbf P_\lambda):\qquad
\min_{T\ge0,\,(v(\cdot),g(\cdot))}\quad
&\int_0^T \ell(z(t),P(t),g(t))\,dt+\lambda\Psi(P(T))\\
\text{s.t.}\quad
&\dot z(t)=v(t),\qquad \dot P_i(t)=k_i(P_\infty(g(t),z(t))-P_i(t)),\\
&v(t)\in[-\dot z_{\max},0],\qquad g(t)\in\mathcal G(z(t))\quad\text{a.e.},\\
&v(t)=-\dot z_{\max}\quad\text{whenever }0<z(t)<z_{\mathrm{exit}},\\
&z(0)=z_{\mathrm{start}},\qquad P(0)=\Pinit,\qquad
T=\inf\{t\ge0:z(t)=0\}.
\end{aligned}
\end{equation}

Given a risk budget $\rho\ge 0$, the constrained time minimisation is
\begin{equation}\label{eq:ocp-cap}
\begin{aligned}
(\mathbf P_{\mathrm{cap}}):\qquad
\min_{T\ge0,\,(v(\cdot),g(\cdot))}\quad &T\\
\text{s.t.}\quad
&\int_0^T\sum_{i=1}^n\phi_i(S_i(t))\,dt+\Psi(P(T))\le\rho,\\
&\text{and all constraints in \eqref{eq:ocp-scalar}.}
\end{aligned}
\end{equation}

\subsection{Assumptions, primitives, and initialisation}

\begin{assumption}[Environment and gases]\label{ass:env}
Depth $z\in[0,Z_{\max}]$, surface at $z=0$. Ambient pressure is
\[
\Pa(z)=\Po+\gamma z,
\qquad \Po>0,\quad \gamma>0.
\]
Fix an alveolar water vapour offset $w\in(0,\Po)$. Let $\calG$ be a finite alphabet of breathing gases. For each $g\in\calG$, fractions $F_{O_2}(g),F_{N_2}(g),F_{He}(g)\ge0$ satisfy
\[
F_{O_2}(g)+F_{N_2}(g)+F_{He}(g)=1,
\]
and the inert fraction is $F_I(g)=F_{N_2}(g)+F_{He}(g)$.
\end{assumption}

\begin{assumption}[Feasibility windows (ppO$_2$ and END)]\label{ass:feas}
Fix $0<\underline p\le\overline p<\infty$, and $\eta\in[0,1]$ to define the narcotic fraction
\[
F_{\mathrm{nar}}(g)=F_{N_2}(g)+\eta F_{O_2}(g).
\]
For $z\in[0,Z_{\max}]$, define alveolar ppO$_2$ and END by
\[
\ppOtwo(g,z)=F_{O_2}(g)\bigl(\Pa(z)-w\bigr),\qquad
0.79\bigl(\Pa(z')-w\bigr)=F_{\mathrm{nar}}(g)\bigl(\Pa(z)-w\bigr),
\]
and set $\END(g,z)=z'$ \cite{USNManualRev6}. Fix $\overline{\END}\in[0,Z_{\max}]$. The feasible gas correspondence is
\[
\calG(z):=\left\{g\in\calG:\ \underline p\le \ppOtwo(g,z)\le\overline p,
\quad \END(g,z)\le\overline{\END}\right\}.
\]
Assume $\calG(z)\neq\varnothing$ for all $z$. Since $z\mapsto \ppOtwo(g,z)$ and $z\mapsto\END(g,z)$ are continuous and $\calG$ is finite, the graph $\{(z,g):g\in\calG(z)\}$ is Borel and closed if $\calG$ is given the discrete topology.
\end{assumption}

\begin{assumption}[Compartments and ceilings]\label{ass:comp}
Compartments are indexed by $i\in\{1,\dots,n\}$ for fixed $n\in\N$. Each compartment has half time $\tau_i>0$ and rate constant $k_i=\ln 2/\tau_i$. Inert gas tissue pressure $P_i$ evolves under measurable $(z(\cdot),g(\cdot))$ via the Carath\'eodory ODE
\[
\dot P_i(t)=k_i\bigl(P_\infty(g(t),z(t))-P_i(t)\bigr),\qquad
P_\infty(g,z)=F_I(g)\bigl(\Pa(z)-w\bigr).
\]
Ceilings are affine in ambient pressure, $M_i(z)=a_i+b_i\Pa(z)$ with $a_i>0$ and $b_i\in(0,1]$. Initial tissue state $P(0)\in\R_+^n$ and initial depth $z(0)=z_{\mathrm{start}}\in[0,Z_{\max}]$ are given. Define
\[
S_i(t)=\max\!\left\{0,\frac{P_i(t)-M_i(z(t))}{M_i(z(t))}\right\}.
\]
\end{assumption}

\begin{assumption}[Penalty]\label{ass:phi}
For each compartment $i$, $\phi_i:\R_+\to\R_+$ is convex, nondecreasing, locally Lipschitz, and satisfies $\phi_i(0)=0$. We interpret $\phi_i(S_i(t))$ as an instantaneous penalty rate associated with compartment $i$ normalised oversaturation.
\end{assumption}

\begin{assumption}[Terminal risk]\label{ass:terminal}
The terminal functional $\Psi:\R_+^n\to\R_+$ is continuous, locally Lipschitz, and componentwise nondecreasing.  The choice $\Psi\equiv0$ gives the in water only objective.  The canonical closed horizon choice is the risk accrued during a fixed surface observation window, as formalised in \cref{prop:surface-tail} below.
\end{assumption}

\begin{assumption}[Kinematics and controls]\label{ass:kin}
Depth $z:[0,T]\to[0,Z_{\max}]$ is absolutely continuous with ascent rate bound
\[
\dot z(t)\in[-\dot z_{\max},0]\quad\text{a.e.},\qquad
T=\inf\{t\ge0:z(t)=0\}.
\]
Fix a terminal transit depth $z_{\mathrm{exit}}\in(0,Z_{\max}]$ and require $\dot z(t)=-\dot z_{\max}$ whenever $0<z(t)<z_{\mathrm{exit}}$. This closed terminal transit condition rules out arbitrarily shallow holds, operationally, $z_{\mathrm{exit}}$ lies below the shallowest allowed stop. Thus $z(t)>0$ for $t<T$ when $z_{\mathrm{start}}>0$, if $z_{\mathrm{start}}=0$, then $T=0$. Surface waiting is not an admissible in water control as the fixed post-surface window begins at this first hit and is charged through $\Psi$. We otherwise restrict to monotone ascent profiles (no re-descents), consistent with standard operational decompression practice. Gas control $g:[0,T]\to\calG$ is measurable with $g(t)\in\calG(z(t))$ a.e. A profile $\pi=(z(\cdot),g(\cdot),T)$ is feasible if \cref{ass:env,ass:feas,ass:comp,ass:phi,ass:terminal,ass:kin} hold.
\end{assumption}

The monotone restriction is not left as an implicit modelling convenience but it is derived for the terminal state relaxation in \cref{thm:endpoint-monotone}, is locally valid for safe off gassing blocks by \cref{prop:safe-offgas-exchange}, and is proved not to be without loss in an unrestricted bidirectional integrated stress extension in
\cref{prop:monotonicity-impossible}.

\begin{proposition}[Fixed surface tail as a terminal functional]\label{prop:surface-tail}
Fix an observation horizon $H_{\mathrm{surf}}\ge0$ and a surface gas $g_{\mathrm{surf}}\in\calG(0)$.  For a terminal tissue state $P$, let
\[
P_i^{\mathrm{surf}}(s;P)
=q_{\mathrm{surf}}+\bigl(P_i-q_{\mathrm{surf}}\bigr)e^{-k_i s},
\qquad
q_{\mathrm{surf}}
=F_I(g_{\mathrm{surf}})(\Po-w),
\]
and define
\begin{equation}\label{eq:surface-tail}
\Psi_{H_{\mathrm{surf}}}(P)
:=\sum_{i=1}^n\int_0^{H_{\mathrm{surf}}}
\phi_i\!\left(
\frac{(P_i^{\mathrm{surf}}(s;P)-M_i(0))_+}{M_i(0)}
\right)ds.
\end{equation}
Then $\Psi_{H_{\mathrm{surf}}}$ satisfies \cref{ass:terminal}.  It is convex when the $\phi_i$ are convex.  Hence post-surface risk over a fixed window can be included without augmenting the dynamic state or changing the admissible control set.
\end{proposition}

\begin{proof}
For every $i$ and $s$, the map $P_i\mapsto P_i^{\mathrm{surf}}(s;P)$ is affine and increasing.  Composition with the positive part, division by the positive constant $M_i(0)$, and the nondecreasing convex map $\phi_i$ preserves convexity and monotonicity. Integration over a compact interval preserves both properties.  Local Lipschitz continuity follows from the local Lipschitz constants of the $\phi_i$ on the reachable compact pressure range.
\end{proof}

The offset $w$ models alveolar water vapour and subtracts from all inspired partial pressures, it leaves $\Pa(\cdot)$ unchanged. END is defined by linearly matching narcotic partial pressures against air's inert fraction $0.79$. The narcotic $O_2$ weight $\eta\in[0,1]$ nests the conventions $\eta=0$ (oxygen non-narcotic) and $\eta=1$ (oxygen fully narcotic). Affine ceilings $M_i=a_i+b_i\Pa$ are standard in compartment models.

We switch between a scalarisation, $T+\lambda R$, and a hard risk cap with time minimisation. The scalarisation is convenient for proofs and sensitivity calculations, while the cap reflects how schedules are often used operationally. They need not be equivalent on a nonconvex attainable set, sweeping $\lambda$ traces only supported efficient points. The precise recovery criterion and deterministic duality gap are given in
\cref{thm:frontier-duality}.

\begin{definition}[Scalarised and capped problems]\label{prob:def}
For a feasible profile $\pi$, define
\[
R(\pi)=\sum_{i=1}^n\int_0^T \phi_i\bigl(S_i(t)\bigr)\,dt
+\Psi(P(T)),
\qquad T(\pi)=T,
\qquad J_\lambda(\pi)=T(\pi)+\lambda R(\pi).
\]
The two optimisation problems are
\begin{align*}
(\mathbf P_\lambda):\quad
&\min_{\pi\ \mathrm{feasible}} J_\lambda(\pi),\\[0.3em]
(\mathbf P_{\mathrm{cap}}):\quad
&\min_{\pi\ \mathrm{feasible}} T(\pi)\quad\text{s.t.}\quad R(\pi)\le\rho.
\end{align*}
\end{definition}

\subsection{Measurable selections}

\begin{lemma}[Measurable gas selection]\label{lem:measurable-selection}
Let $z:[0,T]\to[0,Z_{\max}]$ be measurable and let $\calG(\cdot)$ be as in \cref{ass:feas}. Then there exists a measurable $g:[0,T]\to\calG$ with $g(t)\in\calG(z(t))$ a.e. Moreover, if $h:[0,T]\times\calG\to\R$ is measurable and $\calG$ is finite, there exists a measurable $g^\star$ that attains the pointwise feasible minimum,
\[
h(t,g^\star(t))=\min\{h(t,g):g\in\calG(z(t))\}\quad\text{a.e.}
\]
\cite{KuratowskiRyll1965}
\end{lemma}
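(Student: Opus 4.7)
The plan is to exploit the finiteness of $\calG$ and reduce the selection problem to a finite Borel partition of $[0,T]$ indexed by the gases. Enumerate $\calG=\{g_1,\dots,g_K\}$. Assumption~\ref{ass:feas} already supplies that the graph $\{(z,g):g\in\calG(z)\}$ is closed, so for each $g$ the admissible-depth set $D_g:=\{z\in[0,Z_{\max}]:g\in\calG(z)\}$ is closed in $[0,Z_{\max}]$ (the ppO$_2$ condition is an affine inequality in $z$, and the END condition reduces, via the affine equation $0.79(\Pa(z')-w)=F_{\text{nar}}(g)(\Pa(z)-w)$, to an affine inequality in $z$ as well, with the $F_{\text{nar}}(g)=0$ case being vacuous). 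Since $z:[0,T]\to[0,Z_{\max}]$ is measurable, each preimage $A_k:=z^{-1}(D_{g_k})$ is Borel, and nonemptiness of $\calG(z(t))$ at every $t$ yields $\bigcup_k A_k=[0,T]$.

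For the existence of a measurable feasible selection, I would set $g(t):=g_k$ on $A_k\setminus\bigcup_{j<k}A_j$; these disjoint Borel sets partition $[0,T]$, so $g$ is a simple (hence measurable) $\calG$-valued function with $g(t)\in\calG(z(t))$ everywhere by construction. For the argmin claim, I would lift each $h(\cdot,g_k)$ to a measurable $\R\cup\{+\infty\}$-valued function $\tilde h_k(t):=h(t,g_k)$ on $A_k$ and $+\infty$ on $A_k^c$. Then $m(t):=\min_k\tilde h_k(t)$ is measurable as a pointwise minimum of finitely many measurable functions, and coincides with $\min\{h(t,g):g\in\calG(z(t))\}$ everywhere. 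Taking the first-index argmin,
\[
E_k:=\bigl\{t:\tilde h_k(t)=m(t)\bigr\}\setminus\bigcup_{j<k}E_j,\qquad g^\star(t):=g_k\ \text{on}\ E_k,
\]
partitions $[0,T]$ into measurable sets (the union is $[0,T]$ since $m(t)$ is attained by at least one index for every $t$), and yields a measurable $g^\star$ with $g^\star(t)\in\calG(z(t))$ and $h(t,g^\star(t))=m(t)$ everywhere.

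The main obstacle, such as it is, is really just the bookkeeping of the first-index argmin construction together with confirming that the implicit END definition produces a Borel $D_g$; once that is in hand no deep measurable-selection theorem is required, because finiteness of $\calG$ makes the correspondence $t\mapsto\calG(z(t))$ amenable to a direct Borel partition. The Kuratowski--Ryll-Nardzewski theorem cited in the statement furnishes a one-line alternative: view $t\mapsto\calG(z(t))$ as a measurable closed-valued correspondence in a Polish space with nonempty values, which simultaneously delivers both selections. I would keep the elementary enumerate-and-slice version in the main text, since it makes the measurability of $g$ and $g^\star$ transparent for subsequent use in Lemma~\ref{lem:existence} and Proposition~\ref{prop:segmented}.
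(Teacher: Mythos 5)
Your enumerate-and-slice construction is essentially the paper's own argument: both take the first-index argmin over a finite Borel partition of $[0,T]$ indexed by the gases, and both observe that finiteness of $\calG$ makes any appeal to a selection theorem unnecessary. Your lifting of $h$ to $+\infty$ off the admissible set $A_k$ is a small but welcome clarification, since the paper's definition of $A_k$ writes the comparison $h(t,g_k)\le h(t,g_\ell)$ for all $\ell$ without explicitly restricting $\ell$ to feasible indices; your version makes the intended restriction automatic, and your unpacking of why each $D_g$ is closed (affine ppO$_2$ window, affine END constraint) spells out what the paper takes directly from Assumption~\ref{ass:feas}.
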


\begin{proof}
Enumerate $\calG=\{g_1,\dots,g_m\}$. For each $k$ set
\[
A_k:=\{t:\ g_k\in\calG(z(t))\ \text{and}\ h(t,g_k)\le h(t,g_\ell)\ \text{for every feasible }g_\ell\in\calG(z(t))\}.
\]
Define disjoint sets by $B_1:=A_1$ and $B_k:=A_k\setminus\bigcup_{\ell<k}A_\ell$ for $k\ge2$. Then $g^\star(t):=g_k$ on $B_k$ is measurable and attains the pointwise minimum a.e. The feasibility only statement follows by taking $h\equiv0$.
\end{proof}

\subsection{The gas envelope reduction}

The aggregate model has a stronger property than relaxed control existence. Because every compartment is driven by the same total inspired inert pressure, gas choice is ordered pointwise by inert fraction.  Define
\begin{equation}\label{eq:gas-envelope}
F_I^{\min}(z):=\min_{g\in\calG(z)}F_I(g),
\qquad
\calG_{\min}(z):=\argmin_{g\in\calG(z)}F_I(g).
\end{equation}

\begin{theorem}[Gas-envelope dominance]\label{thm:gas-envelope}
Under \cref{ass:env,ass:feas,ass:comp,ass:phi,ass:terminal}, let $z(\cdot)$ be any feasible depth path and let $g(\cdot)$ be any feasible pure or relaxed gas control. There is a measurable pure selector
\(
g_{\min}(t)\in\calG_{\min}(z(t))
\)
such that, from the same initial tissue state,
\begin{equation}\label{eq:gas-envelope-order}
P_i^{\min}(t)\le P_i^{g}(t)
\quad\text{for every }i\text{ and every }t\in[0,T].
\end{equation}
Consequently
\[
R(z,g_{\min},T)\le R(z,g,T),
\qquad
J_\lambda(z,g_{\min},T)\le J_\lambda(z,g,T).
\]
Thus both $(\mathbf P_\lambda)$ and $(\mathbf P_{\mathrm{cap}})$ admit an optimal gas policy that uses a minimum inert feasible gas at almost every depth.  In particular, gas relaxation has zero value gap in the aggregate model. Moreover, the interval constraints in \cref{ass:feas} induce a finite partition of $[0,Z_{\max}]$ on whose cells $\calG_{\min}(z)$ is constant. Along a monotone depth path, a tie broken envelope selector therefore has only finitely many gas switches.
\end{theorem}

The operational use of oxygen to accelerate decompression is not new \cite{Parker1998Oxygen}. The theorem is the exact comparison principle created by the aggregate input assumption \emph{and the absence of a cumulative oxygen price}.

\begin{proof}
Measurability follows from \cref{lem:measurable-selection}, applied to $h(t,g)=F_I(g)$.  Write
\[
q_{\min}(t)=F_I(g_{\min}(t))(\Pa(z(t))-w)
\]
and let $q_g(t)$ be the inspired inert pressure under the competing pure gas or the convex combination of inspired pressures under a relaxed gas.  By definition, $q_{\min}(t)\le q_g(t)$ almost everywhere.  Variation of constants gives
\[
P_i^g(t)-P_i^{\min}(t)
=k_i\int_0^t e^{-k_i(t-s)}\bigl(q_g(s)-q_{\min}(s)\bigr)\,ds\ge0.
\]
The maps $P_i\mapsto S_i$ and $S_i\mapsto\phi_i(S_i)$ are nondecreasing, and $\Psi$ is componentwise nondecreasing, which proves the cost inequalities including the terminal term. For a fixed gas, each ppO$_2$ inequality and the END inequality cuts out an interval in depth, possibly empty after intersection with $[0,Z_{\max}]$.  The finitely many interval endpoints partition the depth range into finitely many cells with a constant feasible gas set.  Refining cells at inert fraction ties makes a deterministic selector constant on each open cell.  A monotone path crosses each boundary at most once, which proves the finite switch statement.
\end{proof}

\begin{corollary}[Gas choice without an oxygen exposure budget]
\label{cor:gas-pruning}
In the aggregate model with only pointwise ppO$_2$/END windows, and without a cumulative oxygen exposure budget or another path coupling gas resource, every finite grid algorithm may replace $\calG(z_j)$ by one tie broken element of $\calG_{\min}(z_j)$ without changing its optimal value. Thus the gas factor $G_j$ in the unbudgeted base model running times can be set to one. Since $F_I(g)=1-F_{O_2}(g)$, the aggregate model assigns no scarcity cost to oxygen and therefore always selects the feasible gas with greatest oxygen fraction. A cumulative oxygen exposure budget is exactly the augmentation that turns gas selection into an intertemporal allocation problem. Gas switch limits, gas supply constraints, and species specific kinetics are other couplings that can invalidate the pruning rule.
\end{corollary}

\paragraph{Oxygen exposure extension.}
Let $\chi:\R_+\to\R_+$ be a nondecreasing dose-rate function and write
\[
d_g(z):=\chi\!\left(\ppOtwo(g,z)\right).
\]\footnote{The monotonicity hypothesis includes the standard operational clocks. If $t_{\max}(p)$ is the NOAA single-exposure limit at oxygen pressure $p$, the associated CNS rate is $\chi_{\mathrm{CNS}}(p)=100/t_{\max}(p)$ percent per minute; the tabulated $t_{\max}$ is nonincreasing in $p$. A common oxygen toxicity unit (OTU), or unit pulmonary toxic dose (UPTD), rate is $\chi_{\mathrm{OTU}}(p)=\mathbf 1_{\{p>0.5\}} ((p-0.5)/0.5)^{0.83}$ per minute, which is increasing above $0.5$ bar \cite{NOAADivingManual6,USNManualRev7}.} For a pure gas, augment the state by
\begin{equation}\label{eq:oxygen-budget}
\dot C(t)=d_{g(t)}(z(t)),\qquad
C(0)=0,\qquad C(T)\le\overline C.
\end{equation}
For a relaxed gas $\alpha(t)$, the consistent dose rate is the time average $\dot C=\sum_g\alpha_gd_g(z)$ of the pure gas dose rates, rather than $\chi$ evaluated at an averaged oxygen fraction. If the minimum inert envelope satisfies this budget, it remains optimal by \cref{thm:gas-envelope}. If it violates the budget while another gas policy on the same depth path is feasible, every feasible policy must depart from the envelope on a set of positive measure and accept a weakly larger inert input there. Gas choice is then exactly a trade off between tissue loading and scarce oxygen exposure. In a finite state algorithm, $C$ becomes an additional resource coordinate, $G_j$ can no longer be set to one, and conservative upward dose rounding preserves the exposure cap \cite{USNManualRev7,NOAADivingManual6}.

\begin{theorem}[Zero inert hold consolidation]\label{thm:zero-inert}
Suppose a gas $g_0$ with $F_I(g_0)=0$ is feasible throughout a depth interval $[z_\ell,z_h]$, where $z_h>z_\ell$. Consider any staged subprofile that enters at $z_h$, exits at $z_\ell$, uses $g_0$ throughout, ascends at the rate cap between holds, and has total hold time $H$. Replace it by a hold of length $H$ at $z_h$ followed by one max rate ascent to $z_\ell$. The replacement has the same duration and exactly the same exit tissue state, and its accumulated risk is no larger. Consequently, in the aggregate model, every optimal profile may consolidate all zero inert holds within a connected feasibility interval at its deepest point.
\end{theorem}

\begin{proof}
Under $F_I(g_0)=0$, every compartment satisfies $\dot P_i=-k_iP_i$, independently of depth. Both subprofiles have the same total duration, so their tissue trajectories are identical as functions of elapsed time and their exit states coincide. The consolidated profile delays every part of the ascent until after the combined hold. At each elapsed time its depth is therefore weakly greater than that of the original subprofile. Since $M_i(z)=a_i+b_i\Pa(z)$ is increasing in depth, its normalised oversaturation and running penalty are pointwise no larger. The exit state, exit depth, and elapsed time are identical, so every prescribed suffix, including its terminal functional, is unchanged.
\end{proof}

\begin{remark}[Depth sensitive oxygen exposure]
\label{rem:oxygen-diagnostic}
On pure oxygen the aggregate tissue dynamics are independent of depth. Accordingly, a model containing only pointwise ppO$_2$ feasibility cannot optimally split a fixed amount of oxygen dwell between 6 m and 3 m, the same total dwell at 6 m has the same exit tissue state and no greater running risk. Operational use of both stops therefore localises the mathematical divergence to a missing depth sensitive constraint. The cumulative exposure in \eqref{eq:oxygen-budget} supplies exactly such a constraint. Moving a dwell of length $\tau$ from $z_\ell$ to $z_h>z_\ell$ changes dose by
\[
\tau\left[
\chi\bigl(\ppOtwo(g_0,z_h)\bigr)
-\chi\bigl(\ppOtwo(g_0,z_\ell)\bigr)
\right],
\]
which is positive for a strictly increasing dose rate. The consolidation theorem then applies only when the replacement respects the exposure cap, a binding budget can make shallower oxygen or a split between stops optimal. A budget on oxygen minutes alone would not break consolidation, the resource must depend on ppO$_2$ or another depth sensitive exposure state.
\end{remark}

\begin{theorem}[Oxygen budget duality and shadow price]
\label{thm:oxygen-shadow}
Fix a feasible depth path $z:[0,T]\to[0,Z_{\max}]$. For a relaxed gas control $\alpha$, set
\[
Q(t):=\Pa(z(t))-w,\qquad
q^\alpha(t):=Q(t)\sum_g\alpha_g(t)F_I(g),
\]
\[
d^\alpha(t):=\sum_g\alpha_g(t)d_g(z(t)),\qquad
C_z(\alpha):=\int_0^T d^\alpha(t)\,dt,
\]
where $\alpha_g(t)=0$ for $g\notin\calG(z(t))$ and
$\sum_g\alpha_g(t)=1$. Let $P^\alpha$ solve
$\dot P_i=k_i(q^\alpha-P_i)$ and define
\[
R_z(\alpha):=
\int_0^T r(z(t),P^\alpha(t))\,dt+\Psi(P^\alpha(T)),
\qquad
r(z,P):=\sum_i\phi_i(S_i(z,P)).
\]
For an available ascent dose $c$, write
\[
\mathcal R_z(c):=
\inf\{R_z(\alpha): C_z(\alpha)\le c\}.
\]
Assume $\Psi$ is convex and there is a strictly feasible relaxed control with $C_z(\alpha)<c$. Then the infimum is attained, $c\mapsto\mathcal R_z(c)$ is convex and nonincreasing, and there is an oxygen shadow price $\mu^\star\ge0$ such that
\begin{equation}\label{eq:oxygen-duality}
\mathcal R_z(c)
=\inf_\alpha\{R_z(\alpha)+\mu^\star C_z(\alpha)\}
-\mu^\star c,
\qquad
\mu^\star(C_z(\alpha^\star)-c)=0.
\end{equation}
Moreover,
\[
-\mu^\star\in\partial\mathcal R_z(c);
\]
at every differentiability point,
$\mathcal R_z'(c)=-\mu^\star$.

Suppose additionally that $r(z(t),\cdot)$ and $\Psi$ are differentiable along an optimum. Define tissue costates by
\begin{equation}\label{eq:oxygen-adjoint}
-\dot p_i(t)=\partial_{P_i}r(z(t),P^\star(t))-k_ip_i(t),
\qquad
p_i(T)=\partial_i\Psi(P^\star(T)).
\end{equation}
Then
\begin{equation}\label{eq:oxygen-costate-positive}
p_i(t)=e^{-k_i(T-t)}p_i(T)
+\int_t^T e^{-k_i(s-t)}
\partial_{P_i}r(z(s),P^\star(s))\,ds\ge0.
\end{equation}
With
\[
A(t):=Q(t)\sum_i k_ip_i(t)\ge0,
\]
the priced gas envelope is
\begin{equation}\label{eq:oxygen-priced-envelope}
\operatorname{supp}\alpha^\star(t)
\subseteq
\argmin_{g\in\calG(z(t))}
\left\{A(t)F_I(g)+\mu^\star d_g(z(t))\right\}
\quad\text{for a.e. }t.
\end{equation}
The same statement holds in the nonsmooth case with measurable Clarke subgradient selections.
\end{theorem}

\begin{proof}
For the fixed path, $\alpha\mapsto q^\alpha$ and $\alpha\mapsto P^\alpha$ are affine. The map $P\mapsto S_i(z,P)$ is convex, and composition with the convex nondecreasing $\phi_i$ preserves convexity. Thus $R_z$ is convex because $\Psi$ is convex, while $C_z$ is linear. The feasible relaxed controls form a weak star compact simplex valued set, the compartment convolution maps weak star convergent controls to convergent tissue paths. Lower semicontinuity therefore gives attainment. Slater's condition is exactly the strict dose feasibility assumed above. Strong duality for the resulting scalar convex constraint gives \eqref{eq:oxygen-duality}, complementarity, and $-\mu^\star\in\partial\mathcal R_z(c)$. The derivative formula is the one dimensional subgradient identity \cite{Rockafellar1970}. The adjoint equation is the optimality system for the Lagrangian $R_z+\mu^\star C_z$. Both the running risk and terminal risk are componentwise nondecreasing, so their tissue derivatives are nonnegative. Solving \eqref{eq:oxygen-adjoint} backward gives \eqref{eq:oxygen-costate-positive}.

The gas dependent part of the Hamiltonian is
\[
\sum_g\alpha_g(t)
\left[
Q(t)\left(\sum_i k_ip_i(t)\right)F_I(g)
+\mu^\star d_g(z(t))
\right].
\]
Pointwise minimisation over the feasible gas simplex gives \eqref{eq:oxygen-priced-envelope}. For locally Lipschitz penalties, the same argument uses the nonnegative Clarke subgradients of the componentwise nondecreasing risk functions \cite{Clarke1990}.
\end{proof}

\begin{corollary}[Marginal oxygen switching rule]
\label{cor:oxygen-switching}
At a time when two feasible gases $g_H,g_L$ satisfy
\[
F_I(g_H)<F_I(g_L),
\qquad
d_{g_H}(z(t))>d_{g_L}(z(t)),
\]
the higher oxygen gas $g_H$ minimises the two gas switching score precisely when
\begin{equation}\label{eq:oxygen-switch}
A(t)\bigl(F_I(g_L)-F_I(g_H)\bigr)
\ge
\mu^\star\bigl(d_{g_H}(z(t))-d_{g_L}(z(t))\bigr).
\end{equation}
Equality permits relaxed mixing. If $\chi(p)=p$, the aggregate identity $F_I=1-F_{O_2}$ reduces \eqref{eq:oxygen-switch} to
\[
\sum_i k_ip_i(t)\ge\mu^\star.
\]
\end{corollary}

\begin{proof}
Compare the two scores in \eqref{eq:oxygen-priced-envelope}. In the linear dose case, both sides of \eqref{eq:oxygen-switch} contain the common positive factor $Q(t)(F_{O_2}(g_H)-F_{O_2}(g_L))$, which cancels.
\end{proof}

\begin{remark}[Scope of the oxygen shadow price]
\label{rem:oxygen-shadow-scope}
The exact duality statement fixes the depth path and uses relaxed gas controls. A unique minimiser of \eqref{eq:oxygen-priced-envelope} is pure, ties can require time sharing or chattering. For the full hybrid problem, the same score is a necessary condition along normal extremals, but global zero duality gap is not asserted.

When $\mu^\star=0$, the score recovers the minimum inert envelope wherever $A(t)>0$. If $A(t)=0$, first order tissue risk is locally insensitive to gas, although the global comparison in \cref{thm:gas-envelope} still permits a minimum inert replacement in the unbudgeted model. An active dose cap can have $\mu^\star=0$ in a degenerate flat region, so complementarity is not an iff statement. Finally, $C(0)=0$ in \eqref{eq:oxygen-budget} denotes the dose available for the ascent, any pre-ascent exposure is subtracted from $\overline C$.
\end{remark}

\subsection{Well posedness}

\begin{lemma}[Existence and exact pure gas recovery]\label{lem:existence}
Under \cref{ass:env,ass:feas,ass:comp,ass:phi,ass:terminal,ass:kin}, each scalarised problem $(\mathbf P_\lambda)$ admits a minimiser in the relaxed class where the gas is a measurable mixture $\alpha(\cdot)=(\alpha_g(\cdot))_{g\in\calG}$ satisfying
\[
\alpha_g(t)\ge0,
\qquad \sum_{g\in\calG}\alpha_g(t)=1,
\qquad \alpha_g(t)=0\ \text{if }g\notin\calG(z(t)),
\]
and
\[
P_\infty^\alpha(z):=\sum_{g\in\calG}\alpha_g F_I(g)\bigl(\Pa(z)-w\bigr).
\]
The capped problem also admits a relaxed minimiser whenever it is feasible. In the aggregate inert gas model of \cref{ass:comp}, each relaxed minimiser can be replaced, without changing its depth path or terminal time, by a pure minimum inert selector with finitely many switches and no larger risk. Consequently both problems attain their optimum in the pure gas class, and the hard cap $R\le\rho$ is preserved exactly.
\end{lemma}

\begin{proof}
Fix $\lambda>0$ and take a minimising sequence $\pi^m=(z^m,g^m,T^m)$ with $J_\lambda(\pi^m)\downarrow\inf J_\lambda$. A no-hold ascent gives a finite comparison policy, so, after discarding finitely many terms, the horizons $T^m$ are uniformly bounded because $J_\lambda\ge T$. For the capped problem, feasibility supplies at least one finite horizon comparison policy, and the same bounded horizon reduction applies to any minimising sequence.  Pass to a subsequence with $T^m\to T^\star$.

For compactness only, extend all trajectories to a common interval $[0,\bar T]$ by holding at the surface after arrival and using any fixed feasible surface gas, this artificial extension is not part of the profile and contributes neither time nor risk. The depth paths are equi-Lipschitz and bounded, so Arzel\`a--Ascoli gives $z^m\to z^\star$ uniformly along a subsequence. Since $z^m(T^m)=0$, uniform convergence and $T^m\to T^\star$ imply $z^\star(T^\star)=0$. The terminal transit rule is closed under this convergence, after first entering $[0,z_{\mathrm{exit}})$, the remaining time is exactly the remaining depth divided by $\dot z_{\max}$. Hence $T^\star$ is the first surface hit of $z^\star$, rather than the end of an artificial surface hold. The inputs
\[
q^m(t):=P_\infty(g^m(t),z^m(t))
\]
are bounded in $L^\infty(0,\bar T)$ and therefore admit a weak $\ast$ convergent subsequence. Equivalently, since the gas alphabet is finite, the empirical controls generate a Young measure limit $\alpha^\star(t)$ supported on $\calG(z^\star(t))$ a.e., closedness of the feasible graph gives the support statement. The relaxed input is
\[
q^\star(t)=\sum_{g\in\calG}\alpha_g^\star(t)F_I(g)\bigl(\Pa(z^\star(t))-w\bigr).
\]
For each compartment,
\[
P_i^m(t)=e^{-k_it}P_i(0)+k_i\int_0^t e^{-k_i(t-s)}q^m(s)\,ds,
\]
so weak $\ast$ convergence of $q^m$ and compactness of convolution by the exponential kernel imply pointwise, hence uniform, convergence of $P^m$ to the solution driven by $q^\star$. Since $M_i(z)\ge a_i>0$, the map $(z,P)\mapsto S_i(z,P)$ is continuous on the reachable compact set. The running cost is nonnegative and continuous in $(z,P)$. Uniform state convergence and $T^m\to T^\star$ therefore give convergence of the running cost integrals on the active intervals, and in particular
\[
J_\lambda(z^\star,\alpha^\star)\le \liminf_{m\to\infty}J_\lambda(z^m,g^m).
\]
The terminal term converges as well because $P^m(T^m)\to P^\star(T^\star)$ and $\Psi$ is continuous.  Thus a relaxed scalar minimiser exists.  For the capped problem, take a time minimising sequence inside the closed constraint $R\le\rho$.  The same compactness argument applies, and uniform convergence of $(z^m,P^m)$ on the common horizon gives convergence of the risk integrals.  The limit is therefore feasible and time optimal. Finally apply \cref{thm:gas-envelope} to either relaxed minimiser.  The pure envelope selector leaves the depth path and time unchanged and weakly lowers every tissue pressure and the accumulated risk.  It is therefore optimal and, by the finite cell part of \cref{thm:gas-envelope}, has finitely many switches.
\end{proof}

\begin{proposition}[Finite segmented profiles suffice up to $\varepsilon$]\label{prop:segmented}
For every $\varepsilon>0$, there exists a feasible scalarised profile composed of finitely many constant depth holds, separated by max rate ascents and with the pure envelope gas of \cref{thm:gas-envelope}, whose cost $J_\lambda$ is within $\varepsilon$ of the optimal value of $(\mathbf P_\lambda)$. For $(\mathbf P_{\mathrm{cap}})$ the analogous staged approximation satisfies $T\le T^\star+\varepsilon$ and $R\le\rho+\varepsilon$; exact cap preservation follows if the optimum has risk slack.  Gas purity and a finite number of gas switches are exact in the aggregate model, while only the vertical rate staging is approximate.
\end{proposition}

\begin{proof}[Proof sketch]
Use the pure finite switch gas envelope from \cref{thm:gas-envelope}.  Approximate the vertical rate control by pulse width modulation between $0$ and $-\dot z_{\max}$ on a sufficiently fine time partition.  The resulting depth paths converge uniformly, and stability of the linear ODE gives uniform convergence of the tissue trajectories on bounded horizons.  Dominated convergence then applies to both the scalarised running cost and the risk integrand.  This gives the scalarised statement and the stated capped qualification.
\end{proof}

\section{Structural results}
We first identify exactly when monotone ascent follows from the objective and when it can fail.  A local fixed-block exchange is valid in a safe off gassing band, and a continuous rearrangement theorem derives monotonicity for terminal state objectives when the initial depth is maximal.  Outside that scope, an unrestricted bidirectional integrated stress instance admits a re-descent that is strictly better than every monotone profile.

\subsection{When monotone ascent is derived}

\begin{assumption}[Monotone inert fraction along ascent]\label{ass:FI-monotone}
Either (i) there is a single inert species with fixed inert fraction, or (ii) the gas policy satisfies
\[
z_b>z_a\quad\Longrightarrow\quad F_I(g_b)\ge F_I(g_a)
\]
whenever the profile uses segments $(z_a,g_a)$ and $(z_b,g_b)$ consecutively. Equivalently, the oxygen fraction is nondecreasing along ascent. This implies $P_\infty(g_a,z_a)\le P_\infty(g_b,z_b)$ whenever $z_b>z_a$ and the relevant gases satisfy the stated ordering.
\end{assumption}

This hypothesis orders inspired inert pressure with depth.  The ordering controls terminal tissues and, in a safe off gassing band, the running penalty as well.  Outside that band, changing ceilings can reverse the exchange.

\begin{lemma}[Terminal state ordering under swapped constant segments]\label{lem:swap-terminal}
Fix a compartment $i$ and two constant segments $A=(z_a,g_a,\tau_a)$ and $B=(z_b,g_b,\tau_b)$. Let $k:=k_i$ and define the affine segment flow
\[
C_s(x):=P_{\infty,s}+(x-P_{\infty,s})e^{-k\tau_s},\qquad s\in\{a,b\},
\]
where $P_{\infty,s}:=P_\infty(g_s,z_s)$. If $P_{\infty,a}\le P_{\infty,b}$, then for every incoming state $x\in\R_+$,
\[
C_b(C_a(x))-C_a(C_b(x))
=(1-e^{-k\tau_a})(1-e^{-k\tau_b})(P_{\infty,b}-P_{\infty,a})\ge0.
\]
In particular, the terminal tissue tension after executing $A$ then $B$ is at least as large as after executing $B$ then $A$.
\end{lemma}

\begin{proof}
Substitute the two affine maps and expand:
\[
C_b(C_a(x))=P_{\infty,b}+\bigl(C_a(x)-P_{\infty,b}\bigr)e^{-k\tau_b},
\quad
C_a(C_b(x))=P_{\infty,a}+\bigl(C_b(x)-P_{\infty,a}\bigr)e^{-k\tau_a}.
\]
After cancellation, the stated identity remains.
\end{proof}

\begin{proposition}[Safe off gassing exchange for dwell blocks]
\label{prop:safe-offgas-exchange}
Let $A=(z_a,g_a,\tau_a)$ and $B=(z_b,g_b,\tau_b)$ be constant blocks with $z_a<z_b$, $q_a:=P_\infty(g_a,z_a)\le q_b:=P_\infty(g_b,z_b)$, and $M_i(z_a)\le M_i(z_b)$.  Write $C_s$ for the tissue flow of block $s$ and
\[
 I_s(P):=\sum_i\int_0^{\tau_s}
 \phi_i\!\left(
 \frac{(P_{s,i}(t;P_i)-M_i(z_s))_+}{M_i(z_s)}
 \right)dt .
\]
If the incoming state lies in the safe off-gassing band
\[
 q_b\le P_i\le M_i(z_b),\qquad i=1,\ldots,n,
\]
then
\[
 I_B(P)+I_A(C_B(P))\le I_A(P)+I_B(C_A(P)),
 \qquad
 C_A(C_B(P))\le C_B(C_A(P))
\]
componentwise.  Therefore, within the fixed block permutation problem with transit arcs omitted or accounted for separately, replacing $A\to B$ by $B\to A$ cannot increase running stress or the cost of any fixed suffix with componentwise nondecreasing terminal cost.
\end{proposition}

\begin{proof}
Because $q_b\le P_i\le M_i(z_b)$, block $B$ is off gassing and remains below its ceiling, so $I_B(P)=0$ and $C_B(P)\le P$.  Also $q_a\le q_b\le P_i$, hence $C_A(P)\le P$.  When $B$ follows $A$, its tissue trajectory lies between $C_A(P)$ and $q_b$, both no larger than $M_i(z_b)$, thus $I_B(C_A(P))=0$.  Order preservation of the compartment flow and monotonicity of $\phi_i$ give $I_A(C_B(P))\le I_A(P)$.  The terminal state inequality is \cref{lem:swap-terminal}.  A fixed suffix has componentwise nondecreasing cost to go by the same flow ordering and the monotonicity of the running and terminal penalties.
\end{proof}

Within this fixed block abstraction, an undominated inverted pair must leave the band in at least one compartment, it begins on gassing, remains above the deeper ceiling, or both.  The comparison does not include the different transit arcs needed to realise the two orders as continuous depth paths.  The next theorem instead gives a global continuous path result when the pathwise tissue penalty is removed.

\begin{theorem}[Monotone rearrangement for endpoint objectives]
\label{thm:endpoint-monotone}
Temporarily enlarge \cref{ass:kin} by allowing $|\dot z|\le \dot z_{\max}$ above $z_{\mathrm{exit}}$, while retaining $\dot z=-\dot z_{\max}$ on $(0,z_{\mathrm{exit}})$.  Assume that $0\le z(t)\le z_{\mathrm{start}}$ and gas is selected by a fixed measurable feasible depth selector $g_\star(z)$ for which
\[
 q_\star(z):=P_\infty(g_\star(z),z)
\]
is nondecreasing.  Let any running cost be a nonnegative bounded Borel, state independent function $c=c(z,g_\star(z))$, and let $\Psi$ be componentwise nondecreasing.  Then for every admissible profile $z$ with first surface time $T$ there is an admissible nonincreasing profile $z^\downarrow$ with the same $T$ and the same depth occupation measure such that
\[
 P_i^\downarrow(T)\le P_i(T),\qquad i=1,\ldots,n.
\]
Consequently,
\[
 T+\int_0^T c(z^\downarrow(t),g_\star(z^\downarrow(t)))\,dt
   +\lambda\Psi(P^\downarrow(T))
\le
 T+\int_0^T c(z(t),g_\star(z(t)))\,dt+\lambda\Psi(P(T)).
\]
Thus monotone ascent is without loss for terminal state objectives, including a fixed surface tail functional, and likewise for a terminal risk cap.  Any state independent depth exposure, such as an oxygen clock under the fixed selector, is preserved exactly.
\end{theorem}

\begin{proof}
Let $\mu_z(A):=|\{t\in[0,T]:z(t)\in A\}|$ be the depth occupation measure, and let $z^\downarrow$ be the nonincreasing rearrangement of $z$.  By definition, $z^\downarrow$ is equimeasurable with $z$.  It has the same endpoints because $z$ is continuous, starts at its maximum $z_{\mathrm{start}}$, and first reaches its minimum $0$ at $T$. It remains to check the rate constraint.  For $0\le a<b\le z_{\mathrm{start}}$, continuity forces the original path to cross $[a,b]$ at least once.  Since $|\dot z|\le\dot z_{\max}$,
\[
 \mu_z((a,b))\ge \frac{b-a}{\dot z_{\max}}.
\]
In the decreasing rearrangement, the time used to pass from $b$ to $a$ is this occupation mass, up to endpoint atoms, which become holds.  Hence $|z^\downarrow(t)-z^\downarrow(s)|\le \dot z_{\max}|t-s|$.  Moreover the terminal transit rule gives $\mu_z((a,b))=(b-a)/\dot z_{\max}$ whenever $0<a<b<z_{\mathrm{exit}}$; therefore $\dot z^\downarrow=-\dot z_{\max}$ a.e. on that terminal interval. Thus $z^\downarrow$ is feasible and has the same first surface time.

Equimeasurability of depth implies equimeasurability of $q_\star(z(\cdot))$ and $q_\star(z^\downarrow(\cdot))$.  Because both $q_\star$ and $z^\downarrow$ are monotone in opposite directions, $q_\star(z^\downarrow(t))$ is the nonincreasing rearrangement of the input. For each compartment,
\[
 P_i(T)=e^{-k_iT}P_i(0)
 +\int_0^T k_i e^{-k_i(T-s)}q_\star(z(s))\,ds.
\]
The kernel $s\mapsto k_i e^{-k_i(T-s)}$ is increasing.  The Hardy--Littlewood rearrangement inequality \cite{HardyLittlewoodPolya1952}, equivalently repeated use of \cref{lem:swap-terminal} followed by simple function approximation, says that its integral against an equimeasurable input is minimised when the input is arranged in nonincreasing order.  Hence $P_i^\downarrow(T)\le P_i(T)$ simultaneously for all $i$.  The state independent integral is unchanged by equality of occupation measures, and componentwise monotonicity of $\Psi$ proves the claim.  The capped statement is identical.
\end{proof}

\begin{corollary}[Derived monotonicity for the aggregate terminal model]
\label{cor:aggregate-endpoint-monotone}
Consider the unbudgeted aggregate model with objective $T+\lambda\Psi(P(T))$ and the enlarged bidirectional kinematics of \cref{thm:endpoint-monotone}.  Suppose a tie broken minimum inert selector $g_{\min}(z)\in\calG_{\min}(z)$ has nondecreasing input
\[
q_{\min}(z)=F_I^{\min}(z)(\Pa(z)-w).
\]
Then allowing re-descents no deeper than $z_{\mathrm{start}}$ does not improve the optimal value: the problem has a monotone ascent minimiser whenever it has a minimiser at all.
\end{corollary}

\begin{proof}
The comparison argument in \cref{thm:gas-envelope} applies unchanged to a bidirectional depth path and first replaces any gas control by $g_{\min}(z(t))$, weakly lowering every terminal tissue pressure.  Apply \cref{thm:endpoint-monotone} to the resulting depth selected input.  The oxygen budget extension is excluded because it makes the first replacement change a path coupled resource.
\end{proof}

The hypotheses do real work, pathwise ceiling stress is not invariant under occupation measure rearrangement, and the construction below also permits an excursion deeper than the starting depth.

\begin{proposition}[Failure in the unrestricted bidirectional extension]
\label{prop:monotonicity-impossible}
If bidirectional motion and excursions deeper than $z_{\mathrm{start}}$ are admitted above $z_{\mathrm{exit}}$, the assumptions of the base model do not imply the existence of a monotone optimal profile.  Indeed, there are admissible one compartment data with integrated oversaturation for which a re-descent has strictly smaller scalarised cost than every monotone profile.
\end{proposition}

\begin{proof}
Take $k=1$, $\Pa(z)=1+z$, and $M(z)=1+z/2$, corresponding to $a=b=1/2$.  Let $w=0.05$ and use the pure oxygen alphabet $\calG=\{\Otwo\}$ with $\eta=0$, ppO$_2$ window $[0.9,3]$, and $\overline{\END}=2$.  Then $q(z)\equiv0$, ppO$_2$ ranges from $0.95$ to $2.95$, $\END=-0.95$, and the selector is feasible on $[0,2]$.  Set $\phi(s)=s$, $\Psi(P)=P$, $Z_{\max}=2$, $z_{\mathrm{start}}=1/5$, $z_{\mathrm{exit}}=1/10$, $\dot z_{\max}=100$, and $P(0)=2$.  Along every profile, $P(t)=2e^{-t}$. For a monotone profile, $z(t)\le1/5$ and hence $M(z(t))\le11/10$.  Put
\[
 t_0=\log(20/11),\qquad
 r_0:=\int_0^{t_0}\left(\frac{2e^{-t}}{11/10}-1\right)dt
 =\frac9{11}-\log\frac{20}{11}
 =0.220344817\ldots .
\]
If $T\ge t_0$, its integrated risk is at least $r_0$.  If $T<t_0$, its terminal risk is $\Psi(P(T))>11/10>r_0$.  Thus every monotone profile has $R\ge r_0$ and $J_\lambda\ge\lambda r_0$.

Now ascend at maximal rate from $1/5$ to $199/1000$, re-descend at maximal rate to $2$, hold there for four time units, and ascend at maximal rate to the surface.  Its duration is $T_{\mathrm{rd}}=4.03802$.  During the first $0.01802$ time units, $M(z)\ge M(199/1000)=2199/2000$ and $P(t)\le2$, so $S(t)\le1801/2199$.  The deep hold and final ascent have zero running risk, at depth $2$, $M(2)=2>P$, and at the start of the final ascent $P<M(0)=1$.  Hence
\[
 R_{\mathrm{rd}}
 \le 0.01802\frac{1801}{2199}+2e^{-4.03802}
 =0.050023236\ldots<r_0.
\]
At $\lambda=24$,
\[
 J_{24}^{\mathrm{rd}}<4.03802+24(0.050023236)=5.238578
 <24r_0=5.288276\le
 \inf_{\text{monotone }\pi}J_{24}(\pi).
\]
Thus convexity, ordered ceilings, and a componentwise nondecreasing terminal functional do not derive monotonicity in the unrestricted bidirectional extension once integrated oversaturation is retained.  The improving path uses a deeper than start excursion, so it is outside the no deeper hypothesis of \cref{thm:endpoint-monotone}.
\end{proof}

\subsection{Rate saturation and staged density}

\begin{lemma}[Rate saturation and staged density]\label{lem:bangbang}
Consider an open arc contained in the interior of one gas feasibility cell, so that the feasible gas set is locally constant.  Any Pontryagin extremal for which the maximum principle applies has, on every nonsingular portion of that arc,
\[
\dot z(t)\in\{0,-\dot z_{\max}\}
\]
a.e.  Independently of this necessary condition, the set of profiles using only the two rates $\{0,-\dot z_{\max}\}$ is dense in cost in the full admissible rate class.  Consequently, for every $\varepsilon>0$ there exists an $\varepsilon$ optimal scalarised profile consisting of finitely many constant depth holds separated by max rate ascents.
\end{lemma}

\begin{proof}
The vertical rate control is $u(t)=\dot z(t)\in[-\dot z_{\max},0]$. On the interior of a feasibility cell the control set is fixed.  The Hamiltonian for a Pontryagin extremal is
\[
H=\lambda_0\ell(z,P,g)+p_z u+\sum_{i=1}^n p_i k_i\bigl(P_\infty(g,z)-P_i\bigr).
\]
In the normal case $\lambda_0=1$, the endpoint conclusion is unchanged for an abnormal multiplier because the dependence of $H$ on $u$ remains affine. Pointwise minimisation over $[-\dot z_{\max},0]$ gives
\[
u^\star(t)=
\begin{cases}
-\dot z_{\max}, & p_z(t)>0,\\[0.3em]
0, & p_z(t)<0,\\[0.3em]
\text{any value in }[-\dot z_{\max},0], & p_z(t)=0.
\end{cases}
\]
Hence every nonsingular portion of such an extremal uses an endpoint rate. For the density statement, partition time into cells and replace an arbitrary $u(t)\in[-\dot z_{\max},0]$ on each cell by a hold followed by a max rate ascent whose duty cycle has the same average rate.  Refine the partition at every gas feasibility boundary reached by the original monotone path.  The resulting depth paths converge uniformly.  Choose the gas envelope of \cref{thm:gas-envelope} on every approximating path. Away from the finitely many cell boundaries the inspired pressure inputs converge in $L^1$, holds at a boundary can be copied exactly. Stability of the compartment ODE then gives uniform convergence of $P$, and dominated convergence gives convergence of the integrated cost. On the terminal layer $0<z<z_{\mathrm{exit}}$, copy the required max rate ascent exactly rather than pulse width modulating it. Applying this construction to an optimal profile proves the $\varepsilon$ optimal staged statement.
\end{proof}

If singular arcs occur ($p_z=0$ on a set of positive measure), they can be approximated arbitrarily well in cost by hold/ascent chattering, yielding the staged structure up to $\varepsilon$-optimality. Between holds, the vertical rate decision is then trivial, ascend at the rate cap. The computational takeaway is that, after choosing a gas policy, the main continuous decisions are hold depths and dwell times. This density statement should not be read as evidence that staged profiles are operationally superior, continuous ceiling following profiles can be shorter in comparative simulations \cite{Angelini2022Ceiling}. It complements, rather than supersedes, earlier mathematical analyses of admissible ascent strategies \cite{Lewis1983OptimalDecompression,Blasselle2019AdmissiblePressure}.

\subsection{A no stop phase for small risk prices}

Let
\[
r(z,P):=\sum_{i=1}^n\phi_i(S_i(z,P)),\qquad
\Theta:=\frac{z_{\mathrm{start}}}{\dot z_{\max}}.
\]
On the reachable compact pressure set, let $L_r$ and $L_\Psi$ be Lipschitz constants of $P\mapsto r(z,P)$, uniformly in $z$, and of $\Psi$, respectively. Set
\[
q_{\max}:=\max_{z,g}P_\infty(g,z),\qquad
\bar P:=\max\{\|P^0\|_\infty,q_{\max}\},
\]
\[
B_f:=\sum_{i=1}^n k_i(\bar P+q_{\max}),\qquad
B:=B_f(L_r\Theta+L_\Psi).
\]

\begin{theorem}[Explicit sufficient no stop threshold]\label{thm:no-stop}
Let $\pi_0$ be the no hold, max rate ascent using the envelope gas. For any staged profile $\pi$ using the envelope gas and having total dwell time $H$, one has
\begin{equation}\label{eq:no-stop-risk-bound}
R(\pi_0)-R(\pi)\le BH
\end{equation}
and therefore
\begin{equation}\label{eq:no-stop-cost-bound}
J_\lambda(\pi)-J_\lambda(\pi_0)\ge(1-\lambda B)H.
\end{equation}
Consequently, if $B=0$ or $\lambda\le1/B$, the no hold profile is globally optimal. If $\lambda<1/B$, every optimal profile has minimum possible terminal time $\Theta$ and therefore ascends at the rate cap almost everywhere.
\end{theorem}

\begin{proof}
Delete all holds from $\pi$ while retaining its max rate transit arcs and their envelope gas as a function of depth. The concatenated transit is exactly $\pi_0$. During any hold of duration $h$, the tissue state displacement has $\ell_1$ norm at most $B_fh$. Under a common subsequent transit input, the diagonal linear compartment flow is nonexpansive in $\ell_1$. Induction over the segments therefore gives
\[
\|P_{\pi}(s)-P_{\pi_0}(s)\|_1\le B_fH
\]
at every pair of depth matched transit points and at the surface. The held profile accrues nonnegative risk during its holds. Thus the amount by which it can improve on the no hold risk is bounded by the difference between their transit and terminal risks:
\[
R(\pi_0)-R(\pi)
\le L_r\Theta B_fH+L_\Psi B_fH=BH.
\]
Since $T(\pi)-T(\pi_0)=H$, \eqref{eq:no-stop-cost-bound} follows. Staged profiles are dense in the full admissible rate class by \cref{lem:bangbang}, so the global conclusion follows by passage to the limit. Strict inequality forces $H=0$ for any optimum.
\end{proof}

\subsection{Finite stop grid suffices}

\begin{lemma}[Finite stop grid suffices]\label{lem:grid}
For any $\varepsilon>0$, there exists a finite depth grid $\calZ_\varepsilon\subset[0,Z_{\max}]$ such that the scalarised problem admits an $\varepsilon$ optimal profile whose stops lie in $\calZ_\varepsilon$. For the capped problem the same statement holds with the risk qualification $R\le\rho+\varepsilon$, or exactly if an optimal profile has positive risk slack.
\end{lemma}

\begin{proof}
By \cref{lem:bangbang}, consider profiles composed of holds and max rate ascents. The ppO$_2$ and END inequalities are linear in $\Pa(z)$ for each fixed gas, so each gas is feasible on a finite union of depth intervals and the finite gas alphabet induces a finite partition of $[0,Z_{\max}]$ into feasibility cells. Include all cell boundaries and $z_{\mathrm{exit}}$ in the grid, and permit no positive dwell below $z_{\mathrm{exit}}$. On each feasibility cell the feasible gas set is constant.

On reachable compact sets, the map $(z,P)\mapsto\sum_i\phi_i(S_i)$ is locally Lipschitz because $M_i\ge a_i>0$ and each $\phi_i$ is locally Lipschitz. Linear ODE stability gives Lipschitz dependence of $P(\cdot)$ on stop depth and dwell time. Snapping a stop to the nearest grid point within the same feasibility cell therefore perturbs the integrand by $O(\Delta z)$ uniformly; over a bounded total dwell horizon this changes the running risk by $O(\Delta z)$, while local Lipschitzness of $\Psi$ gives the same order for terminal risk. The total max-rate transit time remains exactly $z_{\mathrm{start}}/\dot z_{\max}$. Choosing $\Delta z$ sufficiently small gives the stated approximation. Positive cap slack absorbs the risk perturbation.
\end{proof}

Formally, one may bound $\phi_i'$ on a compact oversaturation interval $[0,\bar S]$ and use Gr\"onwall's inequality to show $|S_i(z)-S_i(\tilde z)|\le C_i|z-\tilde z|$. Then $|\phi_i(S_i(z))-\phi_i(S_i(\tilde z))|\le L_{\phi}C_i|z-\tilde z|$, giving an $O(\Delta z)$ perturbation uniformly over bounded horizons. The inclusion of feasibility cell boundaries is essential, without it, snapping a stop could move a chosen gas across a ppO$_2$ or END boundary.

\subsection{Dwell time optimality and exact marginals}

Numerical stop time optimisation has important predecessors
\cite{Horn2003Optimization,Feng2010Multiparametric}. The point of this subsection is the analytic marginal identity and its nonsmooth optimality consequences, not the first use of optimised dwell times. Fix a stop sequence $\{(z_j,g_j)\}_{j=1}^m$, fix the gas rule on every transit arc, and let $\tau\in\R_+^m$ denote dwell times. In the aggregate model the complete gas rule may, without loss, be the envelope selector from \cref{thm:gas-envelope}. Between stops, ascend at $-\dot z_{\max}$.

\begin{proposition}[Existence and Clarke stationarity in dwell times]\label{prop:polish}
Fix a stop sequence $\{(z_j,g_j)\}_{j=1}^m$ with depths nonincreasing, including a complete feasible transit gas rule, and let $\tau\in\R_+^m$ be the dwell times. Between stops ascend at $-\dot z_{\max}$. Under \cref{ass:phi,ass:terminal}, the map
\[
\tau\mapsto J_\lambda(\tau)=\sum_{j=1}^m\tau_j+\lambda R(\tau)
\]
(up to the additive constant transit time) is locally Lipschitz and coercive, and therefore has a global minimiser. Any local minimiser $\tau^\star$ of $J_\lambda$ subject to $\tau\ge0$ satisfies the Clarke KKT condition
\begin{equation}\label{eq:kkt-dwell-vector}
0\in \boldsymbol 1+\lambda \partial_C R(\tau^\star)+N_{\R_+^m}(\tau^\star),
\end{equation}
where $\partial_C$ is the Clarke subdifferential and $N_{\R_+^m}$ is the Clarke normal cone. Equivalently, there exists a Clarke subgradient vector $\xi\in\partial_C R(\tau^\star)$ such that
\begin{equation}\label{eq:kkt-dwell-coord}
\begin{cases}
1+\lambda\xi_j=0, & \text{if }\tau_j^\star>0,\\[0.3em]
1+\lambda\xi_j\ge0, & \text{if }\tau_j^\star=0.
\end{cases}
\end{equation}

If, in addition, the $\phi_i$ are piecewise linear convex, then $R$ is directionally differentiable in the dwell variables and the inactive condition can also be read as the one sided necessary condition $D_j^+J_\lambda(\tau^\star)\ge0$ for adding dwell at an inactive stop.

In general $R(\tau)$ is not convex in $\tau$, for example, one compartment, one hold, and $\phi(s)=s$ yield a concave $R$ until the zero oversaturation hitting time. The statement above provides first order necessary conditions, not sufficient global optimality conditions.
\end{proposition}

\begin{proof}
Local Lipschitzness follows from Lipschitz dependence of the linear ODE flow on dwell durations, the local Lipschitz property of the running penalties, and local Lipschitzness of $\Psi$. Since $J_\lambda(\tau)\ge\|\tau\|_1$, it is coercive; continuity and the direct method on $\R_+^m$ give global attainment. The Clarke KKT condition \eqref{eq:kkt-dwell-vector} is the standard necessary condition for a local minimum of a locally Lipschitz function over a closed convex set \cite{Clarke1990}. Since the smooth time term contributes $\boldsymbol 1$, the remaining nonsmooth contribution is $\lambda\partial_C R(\tau^\star)$. For the orthant, the normal cone satisfies $(N_{\R_+^m}(\tau^\star))_j=\{0\}$ when $\tau_j^\star>0$ and $(-\infty,0]$ when $\tau_j^\star=0$, giving \eqref{eq:kkt-dwell-coord}. Under piecewise linearity, the compositions of the exponential dwell time flows with the finitely many linear pieces of $\phi_i$ are directionally differentiable, which gives the stated one sided interpretation at inactive stops.
\end{proof}

\begin{remark}[What Clarke equalisation does and does not say]\label{rem:kkt}
Let $\tau^\star$ be a local minimiser as in \cref{prop:polish}. There exists a Clarke subgradient vector $\xi\in\partial_C R(\tau^\star)$ such that, for every active stop $j$ with $\tau_j^\star>0$, $1+\lambda\xi_j=0$. Hence for any two active stops $j,k$, $-\xi_j=-\xi_k=1/\lambda$. Thus the coordinates of one selected generalised gradient equalise. At a nonsmooth point they need not be unique physical marginal derivatives. A genuine marginal identity follows from \cref{thm:dwell-switching} whenever the relevant suffix value is differentiable. Inactive stops have no positive one sided move with negative first order scalarised cost.
\end{remark}

\subsection{An exact dwell switching identity}

For a constant depth, constant gas hold $j$, write
\[
q_j:=P_\infty(g_j,z_j),\qquad
K:=\operatorname{diag}(k_1,\ldots,k_n),
\]
and define
\[
f_j(P):=K(q_j\boldsymbol 1-P),\qquad
r_j(P):=\sum_{i=1}^n
\phi_i\!\left(\frac{(P_i-M_i(z_j))_+}{M_i(z_j)}\right).
\]
The tissue flow during the hold is
\[
\Phi_j^s(P)=q_j\boldsymbol 1+e^{-Ks}(P-q_j\boldsymbol 1).
\]

\begin{theorem}[Exact marginal value of a dwell]\label{thm:dwell-switching}
Fix a segmented schedule and vary only its dwell times. Let $X_j$ and $Y_j=\Phi_j^{\tau_j}(X_j)$ be the entry and exit tissue states at stop $j$. Let $W_{j+1}(y)$ be the total risk of the prescribed suffix when it is entered with tissue state $y$. The suffix includes all later transits and holds and the terminal functional $\Psi$.

Suppose that $r_j$ and $W_{j+1}$ are differentiable at the relevant states. Then
\begin{equation}\label{eq:exact-dwell-derivative}
\frac{\partial R}{\partial\tau_j}
=r_j(Y_j)+\nabla W_{j+1}(Y_j)^\top f_j(Y_j).
\end{equation}
Consequently, at an active locally optimal dwell,
\begin{equation}\label{eq:dwell-benefit-balance}
\nabla W_{j+1}(Y_j)^\top K(Y_j-q_j\boldsymbol 1)
=r_j(Y_j)+\frac1\lambda.
\end{equation}
Thus the marginal reduction in future risk equals the instantaneous risk incurred during the added dwell plus the time price $1/\lambda$.

Moreover, $W_{j+1}$ is componentwise nondecreasing. Hence $\nabla W_{j+1}(y)\ge0$ componentwise wherever it is differentiable. All dwell derivatives can be computed in one forward state sweep and one backward adjoint sweep.
\end{theorem}

\begin{proof}
Up to terms independent of $\tau_j$,
\[
R(\tau)=
\int_0^{\tau_j}r_j\bigl(\Phi_j^s(X_j)\bigr)\,ds
+W_{j+1}\bigl(\Phi_j^{\tau_j}(X_j)\bigr).
\]
Leibniz' rule and $\partial_s\Phi_j^s(X_j)=f_j(\Phi_j^s(X_j))$ give \eqref{eq:exact-dwell-derivative}. At an active local minimum, $0=1+\lambda\,\partial R/\partial\tau_j$. Substituting $f_j(Y_j)=K(q_j\boldsymbol1-Y_j)$ gives \eqref{eq:dwell-benefit-balance}.

If $y\le\widetilde y$ componentwise, applying the same suffix to both states preserves this order under the linear compartment dynamics. The running penalties and $\Psi$ are componentwise nondecreasing, so $W_{j+1}(y)\le W_{j+1}(\widetilde y)$. This proves the monotonicity claim. For computation, propagate the state forward. Propagate the gradient of the suffix risk backward through each fixed segment. On a hold this recursion is
\[
\nabla W_j(x)
=\int_0^{\tau_j}e^{-Ks}
\nabla r_j\bigl(\Phi_j^s(x)\bigr)\,ds
+e^{-K\tau_j}
\nabla W_{j+1}\bigl(\Phi_j^{\tau_j}(x)\bigr),
\]
with terminal condition $\nabla W_{m+1}=\nabla\Psi$. Fixed ascent arcs use the corresponding linear adjoint flow. Each sweep is linear in the number of compartments times the number of segments.
\end{proof}

\begin{corollary}[Stop screening and on gassing dominance]
\label{cor:stop-screening}
At an inactive candidate stop, the sign of
\[
1+\lambda\left[
r_j(X_j)+\nabla W_{j+1}(X_j)^\top f_j(X_j)
\right]
\]
is the exact first order screening test whenever the derivative exists, a negative sign certifies an improving added dwell, while a positive sign is necessary for local inactivity. If $X_j\le q_j\boldsymbol1$ componentwise, every positive dwell at stop $j$ is strictly dominated by zero dwell. Therefore no optimal fixed sequence schedule contains a purely on gassing hold.
\end{corollary}

\begin{proof}
The screening claim is \eqref{eq:exact-dwell-derivative} at $\tau_j=0$. If $X_j\le q_j\boldsymbol1$, then $\Phi_j^s(X_j)\ge X_j$ componentwise for every $s>0$. Removing the hold removes nonnegative running risk, strictly reduces time, and presents the suffix with a componentwise no larger tissue state. Suffix monotonicity from \cref{thm:dwell-switching} completes the proof.
\end{proof}

\subsection{An exactly solvable one compartment stop problem}
\label{subsec:exact-stop}

The adjoint identity becomes closed form in a useful reduced model. Consider one compartment with rate $k>0$. A safe upstream dwell of length $\tau$ changes the state entering a prescribed shallow phase according to
\begin{equation}\label{eq:reduced-stop-state}
x(\tau)=x_\infty+(x_0-x_\infty)e^{-k\tau},
\qquad x_\infty<M<x_0.
\end{equation}
The dwell is called safe because its own ceiling lies above the trajectory, so it incurs no running penalty. During a subsequent shallow phase of fixed duration $L$, let the inspired pressure be $q<M$, the ceiling be $M$, and $\phi(s)=s$. Define the downstream risk kernel
\[
\Psi_L(x):=\int_0^L
\frac{\bigl(q+(x-q)e^{-kt}-M\bigr)_+}{M}\,dt.
\]

\begin{lemma}[Exact downstream risk kernel]\label{lem:exact-risk-kernel}
Set $U:=q+(M-q)e^{kL}$. Then
\begin{equation}\label{eq:exact-risk-kernel}
\Psi_L(x)=
\begin{cases}
0, & x\le M,\\[0.5em]
\displaystyle
\frac{x-M-(M-q)\log\!\left(\frac{x-q}{M-q}\right)}{kM},
& M<x<U,\\[1em]
\displaystyle
\frac{(q-M)L}{M}
+\frac{(x-q)(1-e^{-kL})}{kM},
& x\ge U.
\end{cases}
\end{equation}
The function is continuously differentiable, convex, and nondecreasing, with
\[
\Psi_L'(x)=
\begin{cases}
0,&x\le M,\\[0.3em]
\displaystyle\frac{x-M}{kM(x-q)},&M<x<U,\\[0.8em]
\displaystyle\frac{1-e^{-kL}}{kM},&x\ge U.
\end{cases}
\]
\end{lemma}

\begin{proof}
If $x\le M$, the trajectory never exceeds the ceiling. If $M<x<U$, it hits the ceiling at
\[
h(x)=\frac1k\log\!\left(\frac{x-q}{M-q}\right)<L.
\]
Integrating only over $[0,h(x)]$ gives the middle branch. If $x\ge U$, oversaturation persists throughout $[0,L]$ and direct integration gives the last branch. The values and first derivatives agree at $M$ and $U$, while
\[
\Psi_L''(x)=\frac{M-q}{kM(x-q)^2}>0
\]
on the middle branch.
\end{proof}

\begin{theorem}[Exact optimal dwell]\label{thm:exact-one-compartment-dwell}
For $\lambda>0$, let
\[
J_\lambda(\tau):=\tau+\lambda\Psi_L(x(\tau)),\qquad \tau\ge0.
\]
This objective is convex. Define
\[
H_\lambda(x):=\lambda k(x-x_\infty)\Psi_L'(x).
\]
If $H_\lambda(x_0)\le1$, the unique optimum is $\tau_\lambda^\star=0$. Otherwise there is a unique $x_\lambda^\star\in(M,x_0)$ satisfying
\begin{equation}\label{eq:one-compartment-stationarity}
H_\lambda(x_\lambda^\star)=1,
\end{equation}
and
\begin{equation}\label{eq:one-compartment-dwell}
\tau_\lambda^\star
=\frac1k\log\!\left(
\frac{x_0-x_\infty}{x_\lambda^\star-x_\infty}
\right).
\end{equation}
The state is explicit. Put
\[
x_F:=x_\infty+\frac{M}{\lambda(1-e^{-kL})}.
\]
If $x_0\ge U$ and $x_F\ge U$, then $x_\lambda^\star=x_F$. Otherwise
\[
x_\lambda^\star
=\frac{x_\infty+M+M/\lambda+
\sqrt{(x_\infty+M+M/\lambda)^2
-4(x_\infty M+(M/\lambda)q)}}{2}.
\]
\end{theorem}

\begin{proof}
The map $\tau\mapsto x(\tau)$ is convex, and $\Psi_L$ is convex and nondecreasing, so $J_\lambda$ is convex. Moreover,
\[
J_\lambda'(\tau)
=1-\lambda k(x(\tau)-x_\infty)\Psi_L'(x(\tau))
=1-H_\lambda(x(\tau)).
\]
The map $H_\lambda$ is nondecreasing in $x$, while $x(\tau)$ is strictly decreasing. The boundary and unique interior conclusions follow. On the full exposure branch, \eqref{eq:one-compartment-stationarity} gives $x=x_F$. On the partial hit branch it becomes
\[
(x-x_\infty)(x-M)=\frac{M}{\lambda}(x-q),
\]
whose unique root above $M$ is the displayed expression.
\end{proof}

\begin{corollary}[Exact dwell under a risk cap]\label{cor:exact-risk-cap-dwell}
If $0\le\rho<\Psi_L(x_0)$, the minimum dwell satisfying $\Psi_L(x(\tau))\le\rho$ is
\[
\tau_\rho=\frac1k\log\!\left(
\frac{x_0-x_\infty}{x_\rho-x_\infty}
\right),
\qquad
\Psi_L(x_\rho)=\rho.
\]
On the partial-hit branch,
\[
x_\rho
=q-(M-q)W_{-1}\!\left(
-\exp\!\left[-1-\frac{kM\rho}{M-q}\right]\right),
\]
where $W_{-1}$ is the lower real branch of the Lambert function. On the full exposure branch,
\[
x_\rho=q+\frac{kM\rho+k(M-q)L}{1-e^{-kL}}.
\]
The first branch applies for $0\le\rho\le\Psi_L(U)$ and the second for $\rho\ge\Psi_L(U)$, restricted in both cases to
$\rho<\Psi_L(x_0)$. The optimal scalarised dwell is nondecreasing in $\lambda$ and $L$, while the capped dwell is nonincreasing in $\rho$.
\end{corollary}

\begin{proof}
The cap binds in the stated range because $\Psi_L$ is strictly increasing above $M$ and $x(\tau)$ is strictly decreasing. On the partial hit branch, put $y=(x-q)/(M-q)$. The equation $\Psi_L(x)=\rho$ becomes
\[
y-1-\log y=\frac{kM\rho}{M-q},
\]
whose solution $y\ge1$ is given by $W_{-1}$. The full exposure inverse is affine. The comparative statics follow from \eqref{eq:one-compartment-stationarity} and monotonicity of $\Psi_L'(x)$ in the shallow-phase length $L$.
\end{proof}

\begin{example}[Closed form numerical instance]\label{ex:exact-numerical}
Take the dimensionless parameters
\[
k=\frac{\log2}{20},\quad L=10,\quad q=0.4,\quad M=1,\quad
x_\infty=0.6,\quad x_0=1.6.
\]
Then
\[
U=1.24853,\qquad
\Psi_L(U)=1.17101,\qquad
\Psi_L(x_0)=4.14133.
\]
For $\lambda=20$,
\[
x_{20}^\star=1.07122,\qquad
\tau_{20}^\star=21.71046.
\]
For the cap $\rho=1$,
\[
x_\rho=1.22766,\qquad
\tau_\rho=13.43872.
\]
\Cref{fig:exact-stop} plots the exact risk kernel and three scalarised objectives.
\end{example}

\begin{figure}[t]
\centering
\pgfmathdeclarefunction{riskexample}{1}{%
  \pgfmathparse{ifthenelse(#1<=1,0,
  ifthenelse(#1<1.248528137,
  (#1-1-0.6*ln((#1-0.4)/0.6))/0.034657359,
  -6+(#1-0.4)*(1-exp(-0.3465735903))/0.034657359))}%
}
\pgfmathdeclarefunction{xexample}{1}{%
  \pgfmathparse{0.6+exp(-0.034657359*#1)}%
}
\begin{subfigure}[t]{0.48\linewidth}
\centering
\begin{tikzpicture}
\begin{axis}[
  width=\linewidth,
  height=5.2cm,
  xlabel={entry state $x$},
  ylabel={downstream risk $\Psi_L(x)$},
  xmin=0.6,xmax=1.65,
  ymin=0,ymax=4.6,
  grid=major,
  tick label style={font=\small},
  label style={font=\small}]
\addplot[very thick,blue,domain=0.6:1.65,samples=180]
  {riskexample(x)};
\addplot[dashed,gray] coordinates {(1,0) (1,4.6)};
\addplot[dashed,gray] coordinates {(1.248528137,0) (1.248528137,4.6)};
\addplot[only marks,mark=*,red] coordinates
  {(1.071221445,0.113100805) (1.227663749,1)};
\node[anchor=west,font=\scriptsize] at (axis cs:1.005,4.2) {$M$};
\node[anchor=west,font=\scriptsize] at (axis cs:1.255,4.2) {$U$};
\end{axis}
\end{tikzpicture}
\caption{Exact downstream risk kernel.}
\end{subfigure}
\hfill
\begin{subfigure}[t]{0.48\linewidth}
\centering
\begin{tikzpicture}
\begin{axis}[
  width=\linewidth,
  height=5.2cm,
  xlabel={dwell $\tau$},
  ylabel={$J_\lambda(\tau)$},
  xmin=0,xmax=32,
  ymin=15,ymax=90,
  grid=major,
  legend style={font=\scriptsize,at={(0.98,0.98)},anchor=north east},
  tick label style={font=\small},
  label style={font=\small}]
\addplot[very thick,teal!70!black,domain=0:32,samples=180]
  {x+5*riskexample(xexample(x))};
\addlegendentry{$\lambda=5$}
\addplot[very thick,orange!85!black,domain=0:32,samples=180]
  {x+10*riskexample(xexample(x))};
\addlegendentry{$\lambda=10$}
\addplot[very thick,purple,domain=0:32,samples=180]
  {x+20*riskexample(xexample(x))};
\addlegendentry{$\lambda=20$}
\addplot[only marks,mark=*,teal!70!black] coordinates
  {(11.00750,18.31251)};
\addplot[only marks,mark=*,orange!85!black] coordinates
  {(17.92786,21.86570)};
\addplot[only marks,mark=*,purple] coordinates
  {(21.71046,23.97247)};
\end{axis}
\end{tikzpicture}
\caption{Convex objectives and exact minimisers.}
\end{subfigure}
\caption{The closed form one compartment instance of
\cref{ex:exact-numerical}. Dashed lines mark the two analytic branches; filled points mark exact scalar or capped solutions.}
\label{fig:exact-stop}
\end{figure}

\begin{example}[Finite menu nonconvexity]\label{ex:nonconvex-menu}
The nonconvexity claim is already visible in the finite menu problem solved by the algorithms. Consider a single layer with three mutually exclusive feasible choices whose evaluated increments are
\[
A=(T,R)=(1,6),\qquad B=(4,4),\qquad C=(6,1).
\]
All three points are Pareto efficient and none has both smaller time and smaller risk than another. However, $B$ is unsupported. Indeed,
\[
B\preceq_\lambda A\iff 4+4\lambda\le1+6\lambda\iff \lambda\ge3/2,
\]
and
\[
B\preceq_\lambda C\iff 4+4\lambda\le6+\lambda\iff \lambda\le2/3.
\]
No $\lambda>0$ makes $B$ optimal for the scalarisation $T+\lambda R$. A capped formulation with, for example, $R\le4$ can nevertheless select $B$. This proves nonconvexity for an arbitrary finite menu. It does not, by itself, prove that all three increments are realisable by the continuous compartment model.
\end{example}

Whenever an operational extension or discretisation produces such mutually exclusive menu choices, scalarisations trace only supported pieces and capped problems can land on unsupported efficient points. The worked instance in \cref{sec:worked-example} reports the realised frontier of the stated compartment model rather than inferring it from this abstract menu.

\section{State representation and certified dynamic programming}
\label{sec:algorithms}

Dynamic programming on a finite layered graph is standard. The model specific questions are which tissue statistic makes the recursion valid and which rounding directions preserve a hard cap. We answer those questions before counting states, then resolve the exact label/FPTAS boundary with compartment generated rather than arbitrary arc increments. Resource coordinates, such as accumulated risk and the oxygen dose in \eqref{eq:oxygen-budget}, are appended separately. Real time probabilistic updating, receding horizon control, and shortest path schedule calculations have appeared previously \cite{Survanshi1996RealTime,Feng2012RecedingHorizon,Murphy2017Dissertation,DiMuro2023Adaptive}. Here, \cref{lem:grid,lem:bangbang} reduce the search to a finite depth grid $\calZ$ and maximum rate ascent arcs. A path is
\[
z_0\to z_1\to\cdots\to z_m=0,
\qquad z_{j+1}\le z_j,
\]
with a finite dwell menu $\mathcal T_j$ and gas $g_j\in\calG(z_j)$ at each node, $\mathcal T_j=\{0\}$ strictly below $z_{\mathrm{exit}}$.

\subsection{The state that the recursion must retain}

\begin{theorem}[State obstruction and safe tissue dominance]
\label{thm:state-obstruction}
At a fixed depth $z$, the full tissue vector $P$, or equivalently the full normalised tension $\theta_i=P_i/M_i(z)$, is a sufficient physiological coordinate for state propagation. The clipped vector
\[
\sigma_i=(\theta_i-1)_+
\]
is not a sufficient Markov state in general. In particular, no exact Bellman recursion valid over the full admissible model class may identify labels only through $\sigma$, even when both labels have $\sigma=0$. More precisely, suppose that for some compartment $i$ and feasible hold gas $g$ at depth $z$,
\[
q:=P_\infty(g,z)>M:=M_i(z).
\]
Then there are two incoming pressures $P_i^-<P_i^+<M$ and a common feasible dwell whose two outgoing clipped oversaturations differ.

Conversely, componentwise tissue dominance is safe. If two labels at the same depth satisfy $P\le P'$, then under every common feasible continuation their tissue trajectories satisfy $P(t)\le P'(t)$. Their future running and terminal risks are ordered in the same direction. Thus, after also comparing the accumulated objective or resource coordinates, a label with no larger full tissue vector safely dominates the other.
\end{theorem}

\begin{proof}
For the obstruction, fix the compartment above and choose
\[
0<\tau<\frac1{k_i}\log\!\left(\frac{q}{q-M}\right),
\qquad
L_\tau:=q+(M-q)e^{k_i\tau}.
\]
Then $0<L_\tau<M$. Take $P_i^-=0$ and any $P_i^+\in(L_\tau,M)$, with any remaining coordinates equal and below their ceilings. Both incoming labels have $\sigma=0$. The common dwell transition is
\[
F_\tau(P)=q+(P-q)e^{-k_i\tau}.
\]
By construction,
\[
F_\tau(P_i^-)=q(1-e^{-k_i\tau})<M,
\qquad
F_\tau(P_i^+)>M.
\]
Thus the same clipped state and action produce different outgoing clipped states. If $\phi_i(s)>0$ for $s>0$, as for $\phi_i(s)=c_is^2$, the second trajectory also incurs positive risk on a terminal subinterval of the dwell while the first incurs none. Clipping therefore loses information needed both for propagation and, for nondegenerate penalties, for future cost.

For dominance, run any common continuation from $P\le P'$. The compartment difference satisfies
\[
\frac{d}{dt}(P_i'-P_i)=-k_i(P_i'-P_i),
\qquad
P_i'(t)-P_i(t)=e^{-k_it}(P_i'-P_i)\ge0.
\]
Normalised oversaturation is nondecreasing in tissue pressure. The monotonicity of $\phi_i$ and $\Psi$ therefore orders every future running increment and terminal risk. Finally, $M_i(z)>0$, so $P$ and $\theta$ are in one to one correspondence at a fixed layer.
\end{proof}

\subsection{One sided enclosure certification}

For an arc $e$ in the layered graph, let $F_e(P)$ be its exact tissue transition and let $r_e(P)$ be its exact integrated running risk increment. Both maps are componentwise nondecreasing in $P$ when the arc action is held fixed.

\begin{theorem}[Monotone enclosure certificate]
\label{thm:cap-enclosure}
At layer $j$, let $Q_j^\uparrow$ be a tissue grid projection satisfying $P\le Q_j^\uparrow(P)$ componentwise on the reachable set. For $\Delta r>0$, let
\[
U_{\Delta r}(x):=\Delta r\left\lceil\frac{x}{\Delta r}\right\rceil.
\]
Fix any finite action sequence $e_0,\ldots,e_{m-1}$. Starting from $\widehat P_0=Q_0^\uparrow(P^0)$, define
\[
\widehat P_{j+1}
=Q_{j+1}^\uparrow\!\bigl(F_{e_j}(\widehat P_j)\bigr)
\]
and
\[
\widehat R
=\sum_{j=0}^{m-1}U_{\Delta r}\!\bigl(r_{e_j}(\widehat P_j)\bigr)
+U_{\Delta r}\!\bigl(\Psi(\widehat P_m)\bigr).
\]
Let $P_j$ and $R$ be the exact tissue states and exact total risk obtained by replaying the same action sequence from $P^0$, without tissue or risk rounding. Then
\[
P_j\le\widehat P_j\quad\text{for every }j,
\qquad R\le\widehat R.
\]
Consequently, $\widehat R\le\rho$ certifies the exact unrounded cap $R\le\rho$, with no safety slack.
\end{theorem}

\begin{proof}
The exact compartment flow is componentwise order preserving. If $P_j\le\widehat P_j$, then
\[
P_{j+1}=F_{e_j}(P_j)
\le F_{e_j}(\widehat P_j)
\le Q_{j+1}^\uparrow\!\bigl(F_{e_j}(\widehat P_j)\bigr)
=\widehat P_{j+1}.
\]
The state enclosure follows by induction. Along a fixed arc, the same order holds at every intermediate time, so monotonicity of the penalties gives $r_e(P_j)\le r_e(\widehat P_j)$. Terminal monotonicity gives $\Psi(P_m)\le\Psi(\widehat P_m)$, and $x\le U_{\Delta r}(x)$. Summing proves $R\le\widehat R$.
\end{proof}

This is an exact feasibility certificate for the returned action sequence, not exact optimality for the original continuous problem. Its safety statement does not deteriorate as the grid is coarsened. An oxygen dose coordinate is certified by the same one sided construction.

\begin{table}[H]
\centering
\caption{Information retained by candidate dynamic programming states.}
\label{tab:algorithmic-state}
\small
\begin{tabularx}{\linewidth}{@{}l >{\raggedright\arraybackslash}X
>{\raggedright\arraybackslash}X@{}}
\toprule
Stored physiological coordinate & Exact continuation? & Safe use \\
\midrule
Accumulated risk alone & No; tissue memory is lost. & Resource accounting only. \\
Clipped load $\sigma$ & No; \cref{thm:state-obstruction} gives two zero load labels with different successors. & Current penalty evaluation only. \\
Full $P$ or full $\theta$ & Yes, at a fixed depth layer. & Exact propagation and componentwise dominance. \\
Upper tissue cell plus upper risk bin & Conservative enclosure. & Exact certificate for the unrounded hard cap by \cref{thm:cap-enclosure}. \\
\bottomrule
\end{tabularx}
\end{table}

\subsection{Finite grid implementations and accounting}

\begin{corollary}[Certified finite product grid dynamic program]
\label{cor:product-grid-dp}
Fix $\calZ$, finite dwell menus $\mathcal T_j$, risk bins of width $\Delta r$, and a finite rectangular product grid with $N_P$ tissue cells per depth layer. Assume its top node bounds the reachable set componentwise. The fully discretised capped problem is solved exactly by dynamic programming on a layered DAG in time
\[
\tO\!\left(\left(\sum_j(G_jM_j+1)\right)
(R_{\max}/\Delta r)N_P\,c_{\mathrm{upd}}(n)\right),
\]
where $G_j=|\calG(z_j)|$, $M_j=|\mathcal T_j|$, $R_{\max}$ bounds attainable risk, and $c_{\mathrm{upd}}(n)$ is the tissue-update cost. With $K$ bins per compartment, $N_P=K^n$, so the method is exponential in $n$. It is polynomial only in the explicitly enumerated product grid; ``pseudo polynomial'' applies to the scalar risk range only after $n$ and that grid have been fixed.

With componentwise upward tissue projection and upward risk rounding, every plan accepted by the rounded cap is feasible for the exact unrounded dynamics by \cref{thm:cap-enclosure}. In the unbudgeted aggregate model, \cref{cor:gas-pruning} gives $G_j=1$. With the oxygen budget \eqref{eq:oxygen-budget}, gas branching returns and a dose grid with $N_C$ levels multiplies the bound by $N_C$.
\end{corollary}

\begin{proof}
A label at layer $j$ stores a risk bin, a tissue cell, and the least time known for that pair. Each dwell/gas choice updates all three quantities, the ascent arc moves the label to the next layer, and a final arc applies $\Psi$. The graph is acyclic, so Bellman optimality gives exactness for the fully discretised instance. Counting layer/risk/tissue states, transition fanout, and tissue update work gives the displayed bound. The cap statement follows from \cref{thm:cap-enclosure}.
\end{proof}

Without tissue quantisation, the recursion is a label setting algorithm.

\begin{theorem}[Lagrangian label setting for $(\mathbf P_\lambda)$]
\label{thm:labels}
For fixed $\lambda>0$, the scalarised problem on fixed $\calZ$ and finite dwell menus can be solved by label setting with componentwise full tissue dominance. Its runtime is
\[
\tO\!\left(\left(\sum_jG_jM_j\right)L_{\max}
\,c_{\mathrm{upd}}(n)\right),
\]
where $L_{\max}$ bounds the number of undominated exact full tissue labels per layer for the selected finite dwell menus. The result is exact for that finite menu instance; optional tissue state quantisation and continuous discretisation errors are treated in \cref{thm:apriori}.
\end{theorem}

\begin{proof}
Replace each stop by arcs indexed by $\tau\in\mathcal T_j$, with cost $\Delta T+\lambda\Delta R$ and the exact tissue update. Prune a label only when another at the same depth has no greater accumulated cost and no greater full tissue vector. \Cref{thm:state-obstruction} proves that this dominance is safe and that replacing the vector by clipped load would not be. Counting transitions over surviving labels gives the bound.
\end{proof}

For online or receding horizon implementations, the same finite action menus enter the one step recursion of \cref{thm:dpp}, which also supplies measurable near minimising selectors after discretisation. For an $\varepsilon$ net of dwell menus, one common exponential menu gives $M_j=O(\varepsilon^{-1}\log(T_{\max}/\tau_{\min}))$ over a bounded range. The next two results show that componentwise dominance need not compress the exact Pareto label set for increments generated by the tissue equations themselves, the obstruction is not an artefact of arbitrary arc data.

\subsection{Hardness, exact label growth, and approximation}

\begin{theorem}[One compartment hardness from \textsc{Subset Sum}]
\label{thm:compartment-hardness}
The finite menu decision problem
\[
 \text{does there exist a schedule with }T\le D\text{ and }R\le\rho?
\]
is NP-hard even with one compartment, one zero inert gas, zero running risk, a fixed monotone ascent, and two dwell choices at each stop. Thus the combinatorial obstruction can be generated by the compartment flow itself, arbitrary arc risk data are not required.
\end{theorem}

\begin{proof}
Reduce from \textsc{Subset Sum} \cite{GareyJohnson1979, KellererPferschyPisinger2004}.  Given positive integers $w_1,\ldots,w_m$ and target $B$, discard any $w_j>B$ and place stop $j$ at
\[
z_j=\frac12+\frac{m-j+1}{2(m+1)}\in(1/2,1),
\qquad z_{\mathrm{exit}}=1/4.
\]
At stop $j$ use the dwell menu
\[
             \mathcal T_j=\{0,w_j/B\}.
\]
Choose one pure oxygen gas, so $P_\infty\equiv0$, one compartment with $P^0=1$ and half-time $1$, hence $k=\log2$, and ceilings above $1$. The running oversaturation risk is then identically zero.  Set $z_{\mathrm{start}}=1$ and $\dot z_{\max}=1$, so every schedule has one unit of fixed transit time, and take the monotone terminal functional $\Psi(P)=P$.  The ppO$_2$/END windows and remaining physical data can be fixed once so that pure oxygen is feasible on this shallow interval. For a selected subset $A$, write $W_A=\sum_{j\in A}w_j$.  Since every hold and transit segment has zero inspired inert pressure,
\[
 T_A=1+\frac{W_A}{B},\qquad
 P_A(T_A)=\exp\!\left[-(\log2)\left(1+\frac{W_A}{B}\right)\right]
       =2^{-1-W_A/B}.
\]
With the fixed bounds $D=2$ and $\rho=1/4$,
\[
 T_A\le D\iff W_A\le B,
 \qquad
 R_A=\Psi(P_A(T_A))\le\rho\iff W_A\ge B.
\]
Both inequalities hold exactly when $W_A=B$.  The rational dwell values $w_j/B$ have polynomial binary encoding length, while all other data and the decision thresholds are fixed, so the reduction is polynomial.  This proves NP-hardness, because the source is \textsc{Subset Sum}, the construction does not claim strong NP-hardness of the scheduling problem.
\end{proof}

The discrete dwell menus are essential to this statement, the theorem is a hardness result for the compartment generated finite menu scheduling problem, not a claim that the unrestricted continuous dwell problem encodes the same subset choices.

\begin{proposition}[Exponential exact tissue label frontier]
\label{prop:exponential-labels}
There are bounded horizon, one compartment, zero inert finite menu instances with $m$ stops for which a common layer contains $2^m$ pairwise nondominated exact labels $(C,P)$.  With the dwell menus written explicitly in binary, the canonical instance length is $N=\Theta(m^2)$, so the exact label count is $2^{\Omega(\sqrt N)}$ and in particular is not polynomial in the input length.
\end{proposition}

\begin{proof}
Use the same zero inert construction, zero running risk, and menus
\[
                 \mathcal T_j=\{0,2^{-j}\},\qquad j=1,\ldots,m.
\]
The total optional dwell is below $1$, so the horizon is uniformly bounded. After the last decision, a subset $A\subseteq\{1,\ldots,m\}$ produces
\[
 s_A=\sum_{j\in A}2^{-j},\qquad
 C_A=C_0+s_A,\qquad P_A=\bar P_0e^{-ks_A},
\]
where $C_0$ and $\bar P_0>0$ contain the common transit contributions. The subset sums are all distinct: multiplication by $2^m$ gives the distinct binary integers $0,\ldots,2^m-1$.  If $s_A<s_{A'}$, then $C_A<C_{A'}$ but $P_A>P_{A'}$.  Hence neither label dominates the other under the safe cost/full/tissue order, and all $2^m$ labels survive. Encoding $2^{-j}$ requires $\Theta(j)$ bits, giving $N=\Theta(\sum_{j=1}^m j)=\Theta(m^2)$.

Attach the downstream terminal penalty $\Psi(P)=P$.  For any mesh value $s_A$, choose $\lambda_A=e^{ks_A}/(k\bar P_0)$.  The function
\[
s\longmapsto s+\lambda_A\bar P_0e^{-ks}
\]
is strictly convex and has its unique minimiser on the bounded dwell interval at $s=s_A$.  Thus every one of the $2^m$ labels is optimal for some scalarisation parameter.
\end{proof}

No polynomial bound exists for the number of exact labels surviving componentwise cost/full/tissue dominance, even for one tissue. Approximate state compression has a different answer, in fixed compartment dimension it is fully polynomial under explicit conditioning, as shown in
\cref{thm:fixed-n-fptas}.

\section{Frontier geometry and monotonicity}\label{sec:frontier}

\begin{proposition}[Basic shape]\label{prop:frontier}
Let
\[
\mathcal F_H^{\mathrm{rel}}
:=\{(T(\pi),R(\pi)):\ \pi\ \text{relaxed feasible},\ T(\pi)\le H\}
\]
for a fixed horizon bound $H<\infty$. In the aggregate model, $\mathcal F_H^{\mathrm{rel}}$ is compact, and every point on its efficient frontier is attained with pure finite switch envelope gas. The Pareto frontier is antitone in the usual sense: no efficient point can have both larger time and larger risk than another attainable point. No general convexity claim for the continuous compartment frontier follows from these facts. Finite menu restrictions and model extensions with mutually exclusive switching or resource choices can be nonconvex, as shown by \cref{ex:nonconvex-menu}, in that case scalarisations need not recover every efficient point.
\end{proposition}

\begin{proof}
The bounded horizon compactness argument of \cref{lem:existence} gives a convergent subsequence of depth paths and relaxed inputs. Uniform state convergence and continuity of the running and terminal risk give convergence, not merely lower semicontinuity, of the outcome pair $(T,R)$. Hence the relaxed outcome set is compact. The gas envelope theorem purifies every relaxed outcome to a pure outcome with the same time and weakly lower risk; on the efficient frontier purification must preserve risk, and the selector has finitely many switches. Antitonicity is immediate. The final statement is only about finite menu restrictions or augmented models and follows from \cref{ex:nonconvex-menu}.
\end{proof}

\subsection{Dual recovery of the time/risk frontier}

On the compact set $\mathcal F_H^{\mathrm{rel}}$, define
\[
C_H(\rho)
:=\min\{T:(T,R)\in\mathcal F_H^{\mathrm{rel}},\ R\le\rho\},
\qquad
J_H(\lambda)
:=\min_{(T,R)\in\mathcal F_H^{\mathrm{rel}}}(T+\lambda R),
\]
with $C_H(\rho)=+\infty$ when the cap is infeasible.

\begin{theorem}[Frontier duality and exact recovery criterion]
\label{thm:frontier-duality}
For every $\lambda\ge0$,
\begin{equation}\label{eq:scalar-cap-conjugacy}
J_H(\lambda)=\inf_{\rho\ge0}\{C_H(\rho)+\lambda\rho\}.
\end{equation}
Define
\[
D_H(\rho):=\sup_{\lambda\ge0}\{J_H(\lambda)-\lambda\rho\}.
\]
Then:
\begin{enumerate}[label=(\alph*),leftmargin=2em]
\item $D_H(\rho)\le C_H(\rho)$ for every $\rho$.
\item $D_H$ is the greatest lower semicontinuous, convex, nonincreasing function dominated by $C_H$. It is the closed convexified frontier.
\item If the dual supremum is attained at $\lambda^\star$, then $D_H(\rho)=C_H(\rho)$ if and only if a capped optimum $(T^\star,R^\star)$ also minimises $T+\lambda^\star R$ and satisfies
\[
R^\star\le\rho,\qquad
\lambda^\star(R^\star-\rho)=0.
\]
\item If $\lambda^\star>0$ is dual optimal and has scalar optimal outcomes $(T_-,R_-)$ and $(T_+,R_+)$ with $R_-\le\rho\le R_+$, then a lottery over the two complete schedules attains expected risk $\rho$ and expected time $D_H(\rho)$. Thus $C_H(\rho)-D_H(\rho)$ is exactly the deterministic schedule penalty at that cap.
\end{enumerate}
\end{theorem}

\begin{proof}
For any attainable $(T,R)$, capped optimality gives $C_H(R)\le T$, hence
\[
\inf_{\rho\ge0}\{C_H(\rho)+\lambda\rho\}\le T+\lambda R.
\]
Minimising over outcomes gives one direction of \eqref{eq:scalar-cap-conjugacy}. Conversely, if $(T_\rho,R_\rho)$ attains $C_H(\rho)$, then
\[
J_H(\lambda)\le T_\rho+\lambda R_\rho
\le C_H(\rho)+\lambda\rho.
\]
Taking the infimum over $\rho$ proves equality.

For each $\lambda$, the affine function $\rho\mapsto J_H(\lambda)-\lambda\rho$ lies below $C_H$. Their supremum is therefore a lower semicontinuous, convex, nonincreasing minorant. If $a-\lambda\rho$ is any affine minorant of $C_H$ with $\lambda\ge0$, then
\[
a\le\inf_\rho\{C_H(\rho)+\lambda\rho\}=J_H(\lambda).
\]
Thus every such affine minorant lies below $D_H$, proving maximality.

If a scalar optimum satisfies cap feasibility and complementary slackness, then scalar optimality implies, for every capped feasible $(T,R)$,
\[
T+\lambda^\star R\ge T^\star+\lambda^\star R^\star,
\]
and complementary slackness gives $T\ge T^\star$. It also gives $J_H(\lambda^\star)-\lambda^\star\rho=T^\star$, proving strong duality. Conversely, equality throughout the weak duality chain forces scalar optimality and complementary slackness.

For part (d), set
\[
\theta:=\frac{R_+-\rho}{R_+-R_-}.
\]
The two outcomes lie on the same supporting line, so $T_-+\lambda^\star R_-=T_++\lambda^\star R_+=J_H(\lambda^\star)$. Taking the convex combination with weights $\theta$ and $1-\theta$ gives expected risk $\rho$ and expected time $J_H(\lambda^\star)-\lambda^\star\rho=D_H(\rho)$.
\end{proof}

\begin{lemma}[Monotonicity with respect to inert fraction]\label{lem:inert}
Fix a depth $z$ and incoming tissue state $P$. If two feasible gases satisfy $F_I(g_1)\le F_I(g_2)$, then the hold trajectory under $g_1$ has componentwise no larger tissue pressure than the hold trajectory under $g_2$ at every dwell time. Consequently, its instantaneous oversaturation penalty is no larger at every dwell time.
\end{lemma}

\begin{proof}
For each compartment,
\[
P_i^{g}(t)=P_\infty(g,z)+\bigl(P_i(0)-P_\infty(g,z)\bigr)e^{-k_it}.
\]
Since $P_\infty(g,z)=F_I(g)(\Pa(z)-w)$ is nondecreasing in $F_I(g)$,
\[
P_i^{g_1}(t)-P_i^{g_2}(t)=\bigl(P_\infty(g_1,z)-P_\infty(g_2,z)\bigr)(1-e^{-k_it})\le0.
\]
The maps $P_i\mapsto S_i$ and $S_i\mapsto\phi_i(S_i)$ are nondecreasing, so the penalty ordering follows.
\end{proof}

\begin{proposition}[Monotonicity in $\eta$ and $w$]\label{prop:eta-w}
\begin{enumerate}[label=(\alph*),leftmargin=2em]
\item Increasing $\eta$ weakly increases $F_{\mathrm{nar}}(g)$ for every gas, hence weakly shrinks $\calG(z)$ through the END constraint. The optimal value $J_\lambda^\star$ is therefore nondecreasing in $\eta$ whenever all other primitives are fixed.
\item If the feasible gas sets $\calG(z)$ are held fixed, increasing $w$ decreases $P_\infty(g,z)$ for all $g,z$. By \cref{lem:inert}, this weakly reduces tissue pressures during holds and cannot increase the oversaturation penalty along a fixed profile. Under frozen feasibility sets, the optimal scalarised value is therefore nonincreasing in $w$.
\item If feasibility windows are recomputed when $w$ changes, the net effect is not monotone in general. The END equation gives
\[
z'=\frac{F_{\mathrm{nar}}(g)}{0.79}z+\frac{F_{\mathrm{nar}}(g)-0.79}{0.79\gamma}(\Po-w),
\]
so
\[
\frac{\partial z'}{\partial w}=-\frac{F_{\mathrm{nar}}(g)-0.79}{0.79\gamma}.
\]
Thus increasing $w$ lowers computed END for $F_{\mathrm{nar}}>0.79$ and raises it for $F_{\mathrm{nar}}<0.79$. For ppO$_2$, increasing $w$ lowers $\ppOtwo$, which relaxes an upper ppO$_2$ bound but tightens a lower ppO$_2$ bound. Hence changing $w$ can either expand or shrink feasible gas sets depending on mix and depth.
\end{enumerate}
\end{proposition}

\begin{lemma}[Transit risk bounds]\label{lem:transit}
With adjacent grid spacing $\Delta z$, the risk accrued during one max rate ascent edge between adjacent depths is $O(\Delta z)$. Over a fixed total ascent interval, however, the sum of such edge risks is generally $O(1)$ and should not be discarded unless separately justified. More precisely, if
\[
\bar r:=\sup\Bigl\{\sum_i\phi_i(S_i(z,P)):\ (z,P)\text{ reachable on the chosen compact set}\Bigr\}<\infty,
\]
then total max- ate transit risk is at most $\bar r Z_{\max}/\dot z_{\max}$. Equivalently, with $\bar\ell_{\mathrm{tr}}:=1+\lambda\bar r$, the corresponding scalarised transit cost is at most $\bar\ell_{\mathrm{tr}}Z_{\max}/\dot z_{\max}$.
\end{lemma}

\begin{proof}
The transit time over one adjacent edge is $\Delta z/\dot z_{\max}$ while $\sum_i\phi_i(S_i)$ is bounded by $\bar r$ on reachable compact state sets; multiplying gives $\bar r\Delta z/\dot z_{\max}=O(\Delta z)$ for one edge. Summing over the monotone ascent gives total transit time at most $Z_{\max}/\dot z_{\max}$ and hence total transit risk at most $\bar r Z_{\max}/\dot z_{\max}$. The scalarised bound follows by adding the unit time cost and multiplying the risk contribution by $\lambda$.
\end{proof}

\section{Numerical methods and a-priori error bounds}\label{sec:numerics}

Fix a depth grid $\calZ$ with spacing $\Delta z$ and include all ppO$_2$/END feasibility cell boundaries together with $z_{\mathrm{exit}}$. Use max rate arcs between adjacent depths. At each stop $j$ choose a finite dwell menu $\mathcal T_j\subset\R_+$, with $\mathcal T_j=\{0\}$ below $z_{\mathrm{exit}}$; define the dwell quantisation radius
\[
\delta=\max_j\sup_{\tau\in[0,T_{\max}]}\inf_{\tau'\in\mathcal T_j}|\tau-\tau'|.
\]
For the capped problem, discretise risk to bins of width $\Delta r$ and, when using \cref{cor:product-grid-dp}, discretise tissue state as well.

By \cref{thm:state-obstruction}, store either tissue pressures $P\in\R_+^n$ or full normalised tensions $\theta_i=P_i/M_i(z)$. The clipped vector $\sigma_i=(P_i-M_i(z))_+/M_i(z)$ is an auxiliary score for evaluating the penalty, not a state for propagation or dominance.

\begin{theorem}[A-priori discretisation and cap error]\label{thm:apriori}
Fix an upper bound $T_{\max}$ on optimal time, a uniform bound $m$ on the number of stops, and a bound $E$ on risk bearing arcs. Work on a reachable compact set. Let $\delta_P$ be the maximum $\ell_1$ error introduced by one tissue state rounding step. There are constants $L_{\mathrm{stop}},L_{\mathrm{dwell}},C_P>0$, depending only on model parameters over this compact set, such that:
\begin{enumerate}[leftmargin=2em,label=(\alph*)]
\item \emph{Depth snapping.} Snapping all stop depths to $\calZ$ within their original gas feasibility cells perturbs total risk, including $\Psi$, by at most $L_{\mathrm{stop}}\Delta z\,T_{\max}$. If the initial and terminal depths are preserved, total max rate transit time remains exactly $z_{\mathrm{start}}/\dot z_{\max}$.
\item \emph{Dwell quantisation.} Replacing each optimal dwell $\tau_j^\star$ by the nearest menu value in $\mathcal T_j$ changes $R$ by at most
\[
L_{\mathrm{dwell}}\sum_j|\tau_j-\tau_j^\star|\le L_{\mathrm{dwell}}m\delta,
\]
and changes $T$ by at most $\sum_j|\tau_j-\tau_j^\star|\le m\delta$.
\item \emph{Tissue quantisation.} Nearest cell rounding can understate risk. Its contribution to the a-priori risk error is at most $C_PE\delta_P$. Componentwise upward enclosure rounding is instead certified by
\cref{thm:cap-enclosure}.
\item \emph{Risk discretisation.} Downward per arc rounding gives only
\[
R<\rho+E\Delta r.
\]
Choosing $\Delta r=\varepsilon/E$ limits this contribution to
$\varepsilon$; upward rounding is covered by
\cref{thm:cap-enclosure}.
\end{enumerate}
Consequently, for the scalarised problem,
\[
J_\lambda(\widehat\pi)-J_\lambda(\pi^\star)
\le C_1\Delta z+C_2m\delta+\lambda C_PE\delta_P
\]
for constants $C_1,C_2>0$. Under a cap, a nonconservative
implementation has the bicriteria guarantee
\[
R(\widehat\pi)
\le\rho+
O(\Delta z+m\delta+E\delta_P+E\Delta r).
\]
Exact cap feasibility requires the certificate of \cref{thm:cap-enclosure}, sufficient cap slack, or a separate risk repair step.
\end{theorem}

\begin{proof}
The depth and dwell estimates follow from the finite feasibility-cell partition, local Lipschitzness of the running and terminal risk, and Lipschitz dependence of the linear ODE flow on stop depth and duration. A monotone path made entirely of max rate transits traverses total vertical distance $z_{\mathrm{start}}$, independent of intermediate stop locations.

For the tissue grid, the exact compartment transition is nonexpansive in its incoming pressure under the $\ell_1$ norm. An error of at most $\delta_P$ introduced on each of at most $E$ transitions therefore accumulates to at most $E\delta_P$. Lipschitzness of the risk functional on the reachable compact set gives the term $C_PE\delta_P$. If rounding is componentwise upward, order preservation instead gives an upper enclosure at every stage.

Finally, if $\widehat r_e^-$ and $\widehat r_e^+$ are downward and upward roundings of one arc risk, then
\[
\widehat r_e^-\le r_e<\widehat r_e^-+\Delta r,\qquad
r_e\le\widehat r_e^+<r_e+\Delta r.
\]
Summing over at most $E$ arcs proves the risk rounding claims. Combining the four perturbation bounds gives the displayed scalarised and capped estimates.
\end{proof}

\begin{theorem}[Fixed compartment FPTAS for finite menu scalarisation]
\label{thm:fixed-n-fptas}
Consider the finite menu scalarised problem on a layered graph with at most $E$ arcs per path and $A_{\mathrm{tot}}$ explicitly listed actions.  Let $N$ be its binary input length.  Assume:
\begin{enumerate}[label=(\roman*),leftmargin=2em]
\item $n$ is fixed and $[0,\bar P]^n$ is a known common invariant enclosure for the exact and upper rounded transitions, with $\bar P$ included as the grid's top node;
\item each tissue transition $F_e$ is componentwise order preserving and nonexpansive in $\ell_1$.  The arc risk maps and terminal risk $\Psi$ are componentwise nondecreasing on the enclosure, with known bounds $L_j:=\max_{e\text{ at layer }j}\operatorname{Lip}(r_e)$ and $L_\Psi:=\operatorname{Lip}(\Psi)$;
\item the compulsory max rate transit time $T_0=z_{\mathrm{start}}/\dot z_{\max}$ is positive;
\item rational upper bounds for $\bar P,L_j,L_\Psi,T_0$, and $\lambda$ are included in the encoding $N$, and validated $p$-bit enclosures of transitions and costs are computable in time polynomial in $N+p$.
\end{enumerate}
Define
\[
 \Gamma:=n\left(\sum_{j=0}^{E-1}(j+1)L_j+(E+1)L_\Psi\right),
 \qquad
 \kappa:=\frac{\bar P\lambda\Gamma}{T_0}.
\]
For every $\varepsilon\in(0,1)$, an upper tissue grid dynamic program returns an exact action sequence $\widehat\pi$ satisfying
\[
             J_\lambda(\widehat\pi)
             \le(1+\varepsilon)J_\lambda(\pi^\star)
\]
in time
\[
 O\!\left(
 A_{\mathrm{tot}}
 \left(1+\left\lceil\frac{4\kappa}{\varepsilon}\right\rceil\right)^n
 \operatorname{poly}\!\left(N,\log\frac1\varepsilon\right)
 \right).
\]
Consequently this is an FPTAS on every fixed $n$ family for which $\kappa\le\operatorname{poly}(N)$.  In particular, the condition holds for a fixed physiological model and bounded physical horizon with only the depth and dwell menus growing.  If $n$ is part of the input, the displayed scheme is exponential in $n$ and is not an FPTAS.
\end{theorem}

\begin{proof}
Use a rectangular tissue grid of coordinate spacing
\[
                 \delta=\frac{\varepsilon T_0}{4\lambda\Gamma}
\]
and project every successor component upward; if $\lambda\Gamma=0$, state rounding is unnecessary.  Compute each transition coordinate with a validated upper enclosure error at most $\delta$ before upward grid projection, and include the initial state as a grid node.  For a fixed action sequence, let $P_j$ and $\widehat P_j$ be its exact and rounded states.  Order preservation and nonexpansiveness give inductively
\[
 P_j\le\widehat P_j,
 \qquad
 \|\widehat P_j-P_j\|_1\le 2(j+1)n\delta.
\]
Therefore the rounded risk of any path is an upper bound on its exact risk, and the Lipschitz estimates give
\[
 0\le \widehat R(\pi)-R(\pi)\le2\Gamma\delta
 =\frac{\varepsilon T_0}{2\lambda}.
\]
Use validated upper enclosures for direct arc risk and terminal evaluations, with total risk evaluation error at most $\varepsilon T_0/(2\lambda)$ along a path.  Writing $\widehat J$ for the resulting rounded objective and letting the dynamic program minimise it,
\[
 \begin{aligned}
 J_\lambda(\widehat\pi)
 &\le \widehat J_\lambda(\widehat\pi)
 \le \widehat J_\lambda(\pi^\star)\\
 &\le J_\lambda(\pi^\star)+\varepsilon T_0
 \le (1+\varepsilon)J_\lambda(\pi^\star),
 \end{aligned}
\]
because every path contains the compulsory transit and hence $J_\lambda(\pi^\star)\ge T_0$.  There are at most $1+\lceil\bar P/\delta\rceil$ grid values per tissue coordinate, multiplying by the listed actions and polynomial precision evaluation cost proves the runtime bound.  Under the encoded bounds and $\kappa\le\operatorname{poly}(N)$, the required precision is $p=O(N+\log E+\log(1/\varepsilon))$.
\end{proof}

\begin{corollary}[Exact cap FPTAS under risk repair]
\label{cor:cap-fptas}
Under the hypotheses of \cref{thm:fixed-n-fptas}, suppose additionally that the capped instance is feasible and there are encoded constants $K_{\mathrm{rep}},\eta_0>0$ and a polynomial time repair operator returning a schedule in the same finite menu class such that every schedule with $R\le\rho+\eta$, $0\le\eta\le\eta_0$, can be made cap feasible at an added time of at most $K_{\mathrm{rep}}\eta$.  Define
\[
\kappa_{\mathrm{cap}}
:=\bigl(\bar P\Gamma+\rho(E+1)\bigr)
\max\!\left\{\frac1{\eta_0},\frac{K_{\mathrm{rep}}}{T_0}\right\}.
\]
If $n$ is fixed and $\kappa_{\mathrm{cap}}\le\operatorname{poly}(N)$, then for every $\varepsilon\in(0,1)$ the finite menu capped problem has an FPTAS with runtime
\[
O\!\left(
A_{\mathrm{tot}}
\left(1+\left\lceil\frac{6\kappa_{\mathrm{cap}}}{\varepsilon}\right\rceil\right)^n
\left(1+\left\lceil\frac{3\kappa_{\mathrm{cap}}}{\varepsilon}\right\rceil\right)
\operatorname{poly}\!\left(N,\log\frac1\varepsilon\right)
\right).
\]
\end{corollary}

\begin{proof}
Let
\[
 \eta_\varepsilon=\min\{\eta_0,\varepsilon T_0/K_{\mathrm{rep}}\}.
\]
Project tissues downward and round each of the at most $E+1$ risk increments downward, using validated lower transition enclosures of coordinate error at most $\delta$ and validated lower arc risk and terminal evaluations, clamped at zero if necessary.  Choose tissue and risk meshes so that
\[
 2\Gamma\delta+(E+1)\Delta r+\text{evaluation error}
 \le\eta_\varepsilon.
\]
The rounded risk never exceeds true risk, so an exact optimum is admitted by the rounded dynamic program.  Its selected path therefore has $T(\widehat\pi)\le T^\star$ and $R(\widehat\pi)\le\rho+\eta_\varepsilon$.  Repair gives
\[
 R(\operatorname{Rep}\widehat\pi)\le\rho,\qquad
 T(\operatorname{Rep}\widehat\pi)
 \le T^\star+K_{\mathrm{rep}}\eta_\varepsilon
 \le(1+\varepsilon)T^\star.
\]
Taking, for example, $\delta=\eta_\varepsilon/(6\Gamma)$, $\Delta r=\eta_\varepsilon/[3(E+1)]$, and direct evaluation error at most $\eta_\varepsilon/3$ gives the displayed runtime; when $\Gamma=0$, the tissue grid is unnecessary.  The repair assumption is a polynomial time, same class strengthening of the qualitative error bound in \cref{rem:cap-repair}.
\end{proof}

Without repair, the same construction gives the honest bicriteria guarantee $T(\widehat\pi)\le T^\star$ and $R(\widehat\pi)\le\rho+\eta$, conversely, the upper enclosure of \cref{thm:cap-enclosure} gives exact cap safety but cannot guarantee relative time at a boundary optimum.  A checkable sufficient repair condition is an admissibly lengthenable dwell whose downstream risk derivative is bounded above by $-\beta<0$ near the cap, in which case $K_{\mathrm{rep}}=1/\beta$.  This supplies a continuous dwell repair, or a finite menu repair only when the same action is explicitly available in that menu, it is not automatic for an arbitrary finite menu.

\section{Worked multi-compartment instance}\label{sec:worked-example}

We now work through a fully reproducible stylised instance designed to test the mathematics. Let
\[
\Pa(z)=1+0.1z\ \text{bar},\qquad
w=0.0627\ \text{bar},\qquad
\dot z_{\max}=9\ \text{m/min},\qquad
z_{\mathrm{exit}}=1\ \text{m}.
\]
The initial state is generated by a 25 minute square exposure on air at 30 m, starting from surface air equilibrium:
\[
P_i^0=q_{30}+(q_0-q_{30})e^{-25k_i},\qquad
q_0=0.79(1-w),\qquad q_{30}=0.79(4-w).
\]
This is a bounce exposure, not a saturation exposure. In particular,
\[
q_{30}=3.110467>P_i^0,\qquad i=1,2,3,
\]
so the saturation decompression endpoint theorem in \cref{sec:robust} does not apply to this instance. The profile contains genuine on gassing phases, on entry to EAN50 at 22.627 m, for example, the inspired inert pressure is 1.60 bar and the slow compartment remains near 1.21 bar. A robust treatment of this example, if rate uncertainty were imposed, would have to recompute the 25 minute bottom loading under the same candidate rate used during ascent rather than freezing the nominal vector $P^0$. The numerical study below instead treats the three half times $(5,20,80)$ min, and hence their rate constants, as fixed. No rate uncertainty intervals are imposed in the worked
instance. We use $\phi_i(s)=c_is^2$ and the parameters in \cref{tab:worked-compartments}.

\begin{table}[t]
\centering
\caption{Compartment parameters for the worked instance. Pressures are in bar and half times are in minutes.}
\label{tab:worked-compartments}
\begin{tabular}{@{}rrrrrr@{}}
\toprule
$i$ & half time & $a_i$ & $b_i$ & $c_i$ & $P_i^0$ \\
\midrule
1 & 5  & 0.95 & 0.60 & 1 & 3.03640450 \\
2 & 20 & 0.80 & 0.55 & 2 & 2.11400475 \\
3 & 80 & 0.65 & 0.50 & 4 & 1.20203596 \\
\bottomrule
\end{tabular}
\end{table}

The gas alphabet is air, EAN50, and oxygen. We impose $0.16\le\ppOtwo\le1.60$ bar, $\overline{\END}=30$ m, and $\eta=0$. \Cref{thm:gas-envelope} selects
\[
g_{\min}(z)=
\begin{cases}
\text{air}, & 22.627<z\le30,\\
\text{EAN50}, & 6.627<z\le22.627,\\
\text{oxygen}, & 0\le z\le6.627.
\end{cases}
\]
The switch depths are the exact upper ppO$_2$ boundaries. Allowed hold depths are $(18,15,12,9,6,3)$ m. We compare the literal $\Psi\equiv0$ objective with the fixed 30 minute surface-air functional from \cref{prop:surface-tail}, in both cases at $\lambda=100$.

\paragraph{Scale of the sufficient no stop certificate.}
For this instance,
\[
q_{\max}=\bar P=3.110467,\qquad
B_f=2q_{\max}\log 2\left(\frac15+\frac1{20}+\frac1{80}\right)
=1.131906,\qquad \Theta=\frac{30}{9}=3.333333.
\]
With the $\ell_1$ tissue norm used in \cref{thm:no-stop}, a valid reachable box constant is
\[
L_r=\max_i\frac{2c_i(\bar P-M_i(0))_+}{M_i(0)^2}
=11.859158,
\]
attained by compartment 3. For $\Psi\equiv0$, this gives $B=44.744839$ and $B^{-1}=0.022349$. For the 30 minute surface tail,
\[
L_\Psi=\max_i\int_0^{30}
\frac{2c_i(P_i^{\mathrm{surf}}(s;\bar P\boldsymbol1)-M_i(0))_+}{M_i(0)^2}
e^{-k_i s}\,ds
=269.949388,
\]
so $B=350.302172$ and $B^{-1}=0.002855$. At $\lambda=100$, the two values of $\lambda B$ are respectively $4.47\times10^3$ and $3.50\times10^4$. Thus \cref{thm:no-stop} is a qualitative small-price phase result with deliberately global constants, not a predictor of the operating regime used below. A seeded differential evolution search, bounded local refinement, and adaptive quadrature give the computed stationary solutions in \cref{tab:worked-solutions}. For $H_{\mathrm{surf}}=30$, the best consolidated solution found has
\[
R_{\mathrm{dive}}=0.016095206,\qquad
\Psi_{30}=0.004263871,\qquad
R=0.020359077.
\]
Its surface tissue state is
\[
P(T)=(0.558141,\ 1.414281,\ 1.133049),
\]
with surface normalised tension $(0.360091,\ 1.047616,\ 0.985260)$. For every positive dwell, centred differences with step $10^{-4}$ give
\[
-\frac{\partial R}{\partial\tau_j}
=0.0100000+O(5\times10^{-8})
=\frac1\lambda,
\]
as predicted by \cref{thm:dwell-switching}.

\begin{table}[t]
\centering
\caption{Numerically optimised continuous dwells at $\lambda=100$. Dwell vectors follow the order $(18,15,12,9,6,3)$ m.}
\label{tab:worked-solutions}
\small
\begin{tabularx}{\linewidth}{@{}c X r r r@{}}
\toprule
$H_{\mathrm{surf}}$ & optimal dwell vector (min) & $T$ & $R$ & $J_{100}$ \\
\midrule
0 &
$(1.631435,\,0.818492,\,4.107394,\,2.825062,\,2.033898,\,0)$
& 14.749614 & 0.033041003 & 18.053714 \\
30 &
$(1.632153,\,0.818505,\,4.108968,\,2.826935,\,6.195454,\,0)$
& 18.915349 & 0.020359077 & 20.951257 \\
\bottomrule
\end{tabularx}
\end{table}

Neither computed solution uses a 3 m dwell. Because oxygen has $F_I=0$ throughout $[0,6.627]$ m, \cref{thm:zero-inert} moves any 3m oxygen dwell to the deepest allowed stop, 6m, with no worse risk, irrespective of $\Psi$. What terminal closure changes is the \emph{amount} of oxygen dwell, the 6m hold increases from 2.033898 to 6.195454 minutes, while the 3 m dwell remains zero. For $\Psi\equiv0$, $\partial R/\partial\tau_{3\mathrm m}=0.00857839>0$ diagnoses endpoint truncation, but adding $\Psi$ cannot create a 3m dwell in this unbudgeted zero inert model. The oxygen budget extension in \eqref{eq:oxygen-budget} is precisely what can invalidate that consolidation.

\paragraph{A realised unsupported schedule.}
Restrict every dwell to $\{0,1,\ldots,8\}$ minutes and use $H_{\mathrm{surf}}=0$. Exhaustive evaluation of all $9^6=531{,}441$ schedules gives 22 Pareto points, of which 21 are supported. The unique unsupported efficient schedule is
\[
\tau^B=(2,1,3,3,1,0),\qquad
(T_B,R_B)=(13.333333,\ 0.051048219).
\]
Its adjacent supported outcomes have
\[
(T_-,R_-)=(12.333333,\ 0.062112108),\qquad
(T_+,R_+)=(14.333333,\ 0.038816495).
\]
At $T_B$, their chord has risk $\tfrac12(R_-+R_+)=0.050464302<R_B$, so no scalarisation selects $B$. Under the hard cap $R\le0.0555$, no nine minute dwell vector is feasible, while $B$ is a time optimal Pareto representative among the ten minute vectors. This is a nonconvexity witness generated by the compartment dynamics and feasibility windows, rather than by arbitrary arc increments.

\begin{figure}[t]
\centering
\begin{subfigure}[t]{0.32\linewidth}
\centering
\begin{tikzpicture}
\begin{axis}[
 width=0.96\linewidth,height=5.4cm,
 xlabel={time (min)},ylabel={depth (m)},
 xmin=0,xmax=19,ymin=0,ymax=30,y dir=reverse,
 grid=major,
 legend style={font=\tiny,at={(0.98,0.98)},anchor=north east},
 tick label style={font=\scriptsize},
 label style={font=\small}]
\addplot[very thick,blue] coordinates
 {(0,30) (0.819222,22.627)};
\addlegendentry{air}
\addplot[very thick,orange!85!black] coordinates
 {(0.819222,22.627) (1.333333,18) (2.965486,18)
  (3.298820,15) (4.117325,15) (4.450659,12) (8.559626,12)
  (8.892960,9) (11.719895,9) (11.983561,6.627)};
\addlegendentry{EAN50}
\addplot[very thick,teal!70!black] coordinates
 {(11.983561,6.627) (12.053228,6) (18.248682,6)
  (18.582016,3) (18.915349,0)};
\addlegendentry{oxygen}
\end{axis}
\end{tikzpicture}
\caption{Optimised profile.}
\end{subfigure}
\hfill
\begin{subfigure}[t]{0.32\linewidth}
\centering
\begin{tikzpicture}
\begin{axis}[
 width=0.96\linewidth,height=5.4cm,
 ybar,bar width=3.3pt,
 symbolic x coords={18,15,12,9,6,3},xtick=data,
 xlabel={stop depth (m)},ylabel={dwell (min)},
 ymin=0,ymax=7,
 grid=major,
 legend style={font=\tiny,at={(0.03,0.97)},anchor=north west},
 tick label style={font=\scriptsize},
 label style={font=\small}]
\addplot[fill=gray!45,draw=gray!70!black] coordinates
 {(18,1.631435) (15,0.818492) (12,4.107394)
  (9,2.825062) (6,2.033898) (3,0)};
\addlegendentry{$H_{\mathrm{surf}}=0$}
\addplot[fill=purple!60,draw=purple!85!black] coordinates
 {(18,1.632153) (15,0.818505) (12,4.108968)
  (9,2.826935) (6,6.195454) (3,0)};
\addlegendentry{$H_{\mathrm{surf}}=30$}
\end{axis}
\end{tikzpicture}
\caption{Terminal closure.}
\end{subfigure}
\hfill
\begin{subfigure}[t]{0.32\linewidth}
\centering
\begin{tikzpicture}
\begin{axis}[
 width=0.96\linewidth,height=5.4cm,
 xlabel={time $T$},ylabel={risk proxy $R$},
 xmin=3,xmax=25,ymode=log,ymin=0.007,ymax=0.45,
 grid=major,
 tick label style={font=\scriptsize},
 label style={font=\small}]
\addplot[very thick,blue,mark=*,mark size=1.2pt] coordinates
 {(3.333333,0.407285322) (4.333333,0.305673397)
  (5.333333,0.230128212) (6.333333,0.175693904)
  (7.333333,0.140948379) (8.333333,0.118283941)
  (9.333333,0.101721255) (10.333333,0.087499720)
  (11.333333,0.074163070) (12.333333,0.062112108)
  (14.333333,0.038816495) (15.333333,0.028434976)
  (16.333333,0.020956832) (17.333333,0.015578244)
  (18.333333,0.012199549) (19.333333,0.009739506)
  (20.333333,0.008650906) (21.333333,0.008032619)
  (22.333333,0.007890888) (23.333333,0.007859375)
  (24.333333,0.007859024)};
\addplot[dashed,black] coordinates
 {(12.333333,0.062112108) (14.333333,0.038816495)};
\addplot[only marks,mark=diamond*,mark size=3pt,red] coordinates
 {(13.333333,0.051048219)};
\end{axis}
\end{tikzpicture}
\caption{Realised frontier.}
\end{subfigure}
\caption{Worked three compartment instance. Panel (a) uses the exact minimum inert envelope of the unbudgeted aggregate model. In panel (b), the 30 minute surface functional lengthens the consolidated 6 m oxygen hold from 2.033898 to 6.195454 minutes; both 3 m dwells are zero by \cref{thm:zero-inert}. In panel (c), the blue curve is the supported hull and the red diamond is the unique unsupported efficient point.}
\label{fig:worked-instance}
\end{figure}

\section{Sensitivity}\label{sec:sensitivity}

Let $(T_\lambda,R_\lambda)$ be an optimal outcome of \cref{prob:def} and let $J_\lambda^\star$ denote the scalarised value.

\begin{proposition}[Envelope and frontier slope]\label{prop:envelope}
$J_\lambda^\star$ is locally absolutely continuous in $\lambda$ with
\[
\frac{d}{d\lambda}J_\lambda^\star=R_\lambda\quad\text{a.e.}
\]
Moreover, $T_\lambda$ is nondecreasing and $R_\lambda$ nonincreasing in $\lambda$, and along differentiable portions of a supported efficient frontier,
\[
\frac{dT_\lambda}{dR_\lambda}=-\lambda.
\]
\end{proposition}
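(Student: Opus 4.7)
The plan is to exploit that $J_\lambda^\star$ is the pointwise infimum of the affine-in-$\lambda$ family $\lambda\mapsto T(\pi)+\lambda R(\pi)$ indexed by feasible profiles. As such, $\lambda\mapsto J_\lambda^\star$ is concave on $(0,\infty)$. On any open interval where $J_\lambda^\star$ is finite (which is all of $(0,\infty)$ since the no-hold ascent profile has bounded $T$ and $R$), a concave function is locally Lipschitz, hence locally absolutely continuous and differentiable outside a countable set. This gives the regularity claim for free and reduces the envelope identity to computing the a.e.\ derivative.

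For the derivative, I would invoke a Danskin/envelope argument. Fix $\lambda_0$ and a minimiser $\pi_0$ (existing by Lemma~\ref{lem:existence} in the relaxed class, with the same optimal value as the closure of the pure class). For any $\lambda$,
\[
J_\lambda^\star\ \le\ T(\pi_0)+\lambda R(\pi_0)\ =\ J_{\lambda_0}^\star+(\lambda-\lambda_0)\,R(\pi_0),
\]
so $R(\pi_0)$ is a supergradient of the concave map at $\lambda_0$. Conversely, every supergradient is of this form by taking a sequence of near-optima and using lower semicontinuity of $R$ along the relaxation. At points of differentiability, the superdifferential is a singleton, so $\frac{d}{d\lambda}J_\lambda^\star=R_{\lambda}$ with $R_\lambda$ unambiguously defined among optimisers.

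Monotonicity follows from the usual two-point exchange: for $\lambda_1<\lambda_2$ with respective optima $\pi_1,\pi_2$, the inequalities $T_1+\lambda_1 R_1\le T_2+\lambda_1 R_2$ and $T_2+\lambda_2 R_2\le T_1+\lambda_2 R_1$ add to $(\lambda_2-\lambda_1)(R_2-R_1)\le 0$, giving $R_\lambda$ nonincreasing; substituting back yields $T_\lambda$ nondecreasing. For the frontier slope, differentiate the identity $J_\lambda^\star=T_\lambda+\lambda R_\lambda$ on a differentiable portion and subtract the envelope equation to obtain $\frac{dT_\lambda}{d\lambda}+\lambda\frac{dR_\lambda}{d\lambda}=0$. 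On arcs where $\frac{dR_\lambda}{d\lambda}\neq 0$, dividing gives $\frac{dT_\lambda}{dR_\lambda}=-\lambda$ as stated.

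The main obstacle is the selection issue at kinks of $J_\lambda^\star$, where multiple optima produce a nondegenerate superdifferential $[R_\lambda^+,R_\lambda^-]$ and both $T_\lambda$ and $R_\lambda$ jump. I would address this by stating the identities only at points of differentiability (which form a set of full measure by concavity) and interpreting $T_\lambda,R_\lambda$ as the common value across optimisers there. A subsidiary technical point is that the infimum is taken in the relaxed class, but since pure profiles are dense in cost by Lemma~\ref{lem:existence}, the value function and its superdifferentials are the same, so the envelope identity transfers without modification.
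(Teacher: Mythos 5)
Your proposal is correct and follows essentially the same route as the paper: infimum of affine functions gives concavity and local Lipschitzness of $J_\lambda^\star$, Danskin/envelope identifies the a.e.\ derivative with $R_\lambda$, and the frontier slope comes from differentiating $J_\lambda^\star=T_\lambda+\lambda R_\lambda$. The one small variation is that you derive the monotonicity of $R_\lambda$ (and hence $T_\lambda$) by the direct two-point exchange inequality, whereas the paper reads it off the nonincreasing derivative of the concave function $J_\lambda^\star$; both are standard, and your exchange argument has the minor advantage of applying pointwise to any choice of optimisers rather than only at differentiability points.
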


\begin{proof}
For any feasible $\pi$, the map $\lambda\mapsto T(\pi)+\lambda R(\pi)$ is affine. Hence
\[
J_\lambda^\star=\inf_\pi\{T(\pi)+\lambda R(\pi)\}
\]
is the infimum of affine functions and is concave and locally Lipschitz on intervals where it is finite. By Danskin's theorem for minima \cite{Danskin1966}, at differentiability points its derivative equals the common risk value of active optimal profiles; more generally the superdifferential is the convex hull of active risk values. Thus $dJ_\lambda^\star/d\lambda=R_\lambda$ a.e. for any measurable optimal selection at differentiability points. Concavity implies this derivative is nonincreasing, hence $R_\lambda$ is weakly nonincreasing.

For $\lambda_1<\lambda_2$, optimality gives
\[
T_1+\lambda_1R_1\le T_2+\lambda_1R_2,
\qquad
T_2+\lambda_2R_2\le T_1+\lambda_2R_1.
\]
Adding yields $(\lambda_2-\lambda_1)(R_2-R_1)\le0$, so $R_2\le R_1$; substituting back gives $T_2\ge T_1$. At differentiability points, differentiating $J_\lambda^\star=T_\lambda+\lambda R_\lambda$ gives $dT_\lambda/d\lambda=-\lambda\,dR_\lambda/d\lambda$, hence $dT_\lambda/dR_\lambda=-\lambda$ when $dR_\lambda/d\lambda\ne0$.
\end{proof}

\section{Robust saturation decompression under interval uncertainty}
\label{sec:robust}

The result in this section is an endpoint theorem for saturation decompression, not a generic robustness theorem for bounce diving. Its defining ordering is that every compartment begins at or above the largest inspired inert input subsequently encountered and that this input is nonincreasing thereafter. Exact bottom equilibrium, $P_i^0=q^\pi(0)$ for every $i$, is the canonical saturation case, the weak inequality also permits a conservative supersaturated initial bound \cite{Imbert2024Saturation}. A finite square exposure begun from a lower equilibrium has the opposite ordering. If $q_s<q_b$ and the bottom time is $t_b<\infty$, then
\[
P_i^0=q_b+(q_s-q_b)e^{-k_i t_b}<q_b.
\]
Such a bounce profile starts with on gassing compartments. If rate uncertainty is imposed, it need not reduce to one endpoint scenario. The exact two level calculation in \cref{prop:bounce-rate} below shows that the worst rate can be strictly interior to its uncertainty interval. Fix an open loop segmented profile $\pi$. For an uncertainty realisation $\theta$, write $q_\theta^\pi(t)$ for its inspired inert input and $M_{i,\theta}^\pi(t)$ for its ceiling along the profile. Assume $q_\theta^\pi$ is nonincreasing and $P_{i,\theta}^0\ge q_\theta^\pi(0)$ for every $i$ and $\theta$. Also assume
\[
P_{i,\theta}^0\le\overline P_i^0,\qquad
k_{i,\theta}\in[\underline k_i,\overline k_i],\qquad
q_\theta^\pi(t)\le\overline q_\pi(t),
\]
and
\[
M_{i,\theta}^\pi(t)\ge\underline M_i^\pi(t)>0.
\]
Assume $\overline q_\pi$ is right continuous, of bounded variation, and nonincreasing, with $\overline P_i^0\ge\overline q_\pi(0)$. The uncertainty set is called rectangular when the simultaneous endpoint choice
\[
P_i^0=\overline P_i^0,\quad
k_i=\underline k_i,\quad
q^\pi=\overline q_\pi,\quad
M_i^\pi=\underline M_i^\pi
\]
is admissible.

\begin{theorem}[Saturation decompression endpoint principle]
\label{thm:robust-endpoint}
Under the assumptions above,
\[
P_{i,\theta}(t)\le P_i^{\mathrm{wc}}(t)
\quad\text{for every }i,t,
\]
where
\[
\dot P_i^{\mathrm{wc}}
=\underline k_i\bigl(\overline q_\pi(t)-P_i^{\mathrm{wc}}\bigr),
\qquad
P_i^{\mathrm{wc}}(0)=\overline P_i^0.
\]
Consequently $R_\theta(\pi)\le R_{\mathrm{wc}}(\pi)$, where running risk uses $P^{\mathrm{wc}}$ and $\underline M^\pi$, and the common terminal functional $\Psi$ is evaluated at $P^{\mathrm{wc}}(T)$.

If the uncertainty is rectangular, the bound is attained:
\[
\sup_\theta R_\theta(\pi)=R_{\mathrm{wc}}(\pi).
\]
Hence, over any saturation/off gassing policy class satisfying these hypotheses and having a common rectangular endpoint scenario, the robust scalarised and capped problems reduce exactly to that one deterministic endpoint instance.
\end{theorem}

\begin{proof}
For fixed $k$, the scalar comparison principle for $\dot P=k(q-P)$ shows that increasing the initial state or input increases $P(t)$ at every time. It remains to compare rate constants. For nonincreasing $q$, integration by parts gives
\[
P_k(t)-q(t)
=e^{-kt}\bigl(P^0-q(0)\bigr)
+\int_{(0,t]}e^{-k(t-s)}\,d(-q)(s).
\]
Both terms are nonnegative when $P^0\ge q(0)$ and nonincreasing in $k$. At exact saturation the first term vanishes. The rate ordering is then generated entirely by the subsequent downward variation of inspired inert pressure, slower compartments retain more of every earlier, larger input. Thus, after first replacing $(P^0,q_\theta^\pi)$ by $(\overline P^0,\overline q_\pi)$, replacing $k_{i,\theta}$ by $\underline k_i$ can only increase tissue pressure. This proves the state ordering.

The map $(P,M)\mapsto(P-M)_+/M$ is nondecreasing in $P$ and nonincreasing in $M$. The running penalties and $\Psi$ are nondecreasing, so the risk ordering follows. Rectangularity makes every endpoint choice simultaneously attainable, proving equality profile by profile and hence the two robust optimisation reductions.
\end{proof}

\begin{example}[Numerical saturation endpoint check]
\label{ex:saturation-endpoint-check}
We exercise \cref{thm:robust-endpoint} on a separate saturation instance using the gases and compartments of \cref{sec:worked-example}, not its bounce initial condition. Begin at $22.627$ m on EAN50 at exact tissue equilibrium, ascend at $9$ m/min, switch to oxygen at $6.627$ m, and continue directly to the surface. With
\[
T=\frac{22.627}{9}=2.514111,
\qquad t_s=\frac{16}{9}=1.777778,
\]
the upper inspired inert pressure profile is
\[
\overline q(t)=
\begin{cases}
1.6-0.45t, & 0\le t<t_s,\\
0, & t_s\le t\le T.
\end{cases}
\]
It decreases from $1.6$ to $0.8$ bar on EAN50 and then jumps to zero.

Model a common inspired inert pressure calibration factor by
\[
q_\beta(t)=\beta\overline q(t),\qquad
P_{i,\beta}^0=q_\beta(0)=1.6\beta,
\qquad \beta\in[0.98,1],
\]
and independent uncertain half times
\[
H_i\in[h_i/1.2,1.2h_i],
\qquad (h_1,h_2,h_3)=(5,20,80)\ \mathrm{min}.
\]
Thus every scenario starts at saturation. Moreover,
\[
k_i=\frac{\log 2}{H_i},\qquad
[\underline k_i,\overline k_i]
=\left[\frac{\log 2}{1.2h_i},\frac{1.2\log 2}{h_i}\right],
\]
with
\[
\underline k=(0.115524530,0.028881133,0.007220283).
\]
The simultaneous choice $\beta=1$ and $H_i=1.2h_i$ is admissible, so it is the rectangular endpoint in \cref{thm:robust-endpoint}.

Keep $a_i,b_i,c_i$ from \cref{tab:worked-compartments}, take $\Psi\equiv0$, and evaluate
\begin{equation}\label{eq:saturation-check-risk}
R_{\beta,H}=\sum_{i=1}^3\int_0^T
c_i\left(
\frac{\bigl(P_i^{\beta,H}(t)-M_i(t)\bigr)_+}{M_i(t)}
\right)^2dt,
\qquad
M_i(t)=a_i+b_i(3.2627-0.9t).
\end{equation}
The tissue trajectory is available in closed form:
\[
P_i^{\beta,H}(t)=
\begin{cases}
\displaystyle
\beta\left(1.6-0.45t+
\frac{0.45}{k_i}(1-e^{-k_it})\right),
&0\le t\le t_s,\\[7pt]
\displaystyle
P_i^{\beta,H}(t_s)e^{-k_i(t-t_s)},
&t_s<t\le T.
\end{cases}
\]
At the endpoint this gives
\[
P_i^{\mathrm{wc}}(T)=
\left[0.8+\frac{0.45}{\underline k_i}
\left(1-e^{-\underline k_i16/9}\right)\right]
e^{-\underline k_i6.627/9}
\]
and, numerically,
\[
P^{\mathrm{wc}}(T)
=(1.398983812,1.546567604,1.586430700).
\]
Adaptive quadrature of \eqref{eq:saturation-check-risk} gives
\[
(R_1^{\mathrm{wc}},R_2^{\mathrm{wc}},R_3^{\mathrm{wc}})
=(0,0.005720183,0.160800538),
\]
so the theorem gives the continuum equality
\[
\boxed{
\sup_{\substack{\beta\in[0.98,1]\\
H_i\in[h_i/1.2,1.2h_i]}}
R_{\beta,H}
=R_{\beta=1,\,k=\underline k}
=0.166520721.}
\]
\Cref{tab:saturation-endpoint-corners} independently audits all 16 corners; each displayed row represents both choices of $H_1$, because compartment 1 remains below its ceiling throughout.
\end{example}

\begin{table}[H]
\centering
\caption{Corner audit for \cref{ex:saturation-endpoint-check}. ``Slow'' means $H_i=1.2h_i$ and ``fast'' means $H_i=h_i/1.2$.}
\label{tab:saturation-endpoint-corners}
\begin{tabular}{@{}cccc@{}}
\toprule
$\beta$ & $H_2$ & $H_3$ & $R_{\beta,H}$ \\
\midrule
$0.98$ & slow & slow & $0.133079469$ \\
$0.98$ & slow & fast & $0.129035555$ \\
$0.98$ & fast & slow & $0.131970267$ \\
$0.98$ & fast & fast & $0.127926353$ \\
$1.00$ & slow & slow & $0.166520721$ \\
$1.00$ & slow & fast & $0.161881908$ \\
$1.00$ & fast & slow & $0.164947100$ \\
$1.00$ & fast & fast & $0.160308287$ \\
\bottomrule
\end{tabular}
\end{table}

For the canonical surface tail functional in \cref{prop:surface-tail}, the ``common $\Psi$'' hypothesis means that surface gas input, surface ceilings, and tissue rate constants are held fixed across scenarios. If those tail parameters are uncertain as well, the endpoint comparison must be extended through the surface phase or the tail scenarios retained explicitly.

\begin{remark}[Saturation scope and bounce dives]
\label{rem:robust-bounce}
The monotonicity in $k$ reverses on a pure on gassing segment. If $q>P^0$ is constant, then
\[
P_k(t)=q+(P^0-q)e^{-kt}
\]
is increasing in $k$. Mixed on gassing and off gassing profiles can therefore require a genuine multi-scenario robust optimisation, no single endpoint choice of $k_i$ is worst throughout the trajectory. The worked bounce exposure in \cref{sec:worked-example} is explicitly outside the theorem. There
\[
q_{30}=0.79(4-0.0627)=3.110467,\qquad
P^0=(3.036405,\,2.114005,\,1.202036),
\]
so $P_i^0<q_{30}$ for every compartment. At the 22.627 m switch to EAN50, its inspired inert pressure is still 1.60 bar and the slow compartment is still on gassing; it crosses to off gassing later, near 15 m. Thus \cref{thm:robust-endpoint} is a strong exact result for saturation decompression, including the equilibrium case $P_i^0=q(0)$, and is not used to justify robustness of the worked bounce instance.
\end{remark}

\begin{proposition}[Rate dependence after a two-level bounce]
\label{prop:bounce-rate}
Fix $q_b>q_s\ge0$, $t_b,t_a>0$, and an initial tissue pressure $p_0\ge0$. For each $k\ge0$, let the compartment breathe the bottom input $q_b$ for time $t_b$, followed by the lower input $q_s$ for time $t_a$. Write $P_T(k)$ for its pressure at the end and define
\[
p_{\mathrm{crit}}:=\frac{t_bq_b+t_aq_s}{t_a+t_b}.
\]
Then
\begin{equation}\label{eq:bounce-terminal}
P_T(k)
=q_s+(q_b-q_s)e^{-kt_a}
+(p_0-q_b)e^{-k(t_a+t_b)}.
\end{equation}

If $p_0<p_{\mathrm{crit}}$, the terminal pressure is strictly increasing on $[0,k^\star]$ and strictly decreasing on $[k^\star,\infty)$, where
\begin{equation}\label{eq:bounce-kstar}
k^\star
=\frac1{t_b}
\log\!\left(
\frac{(t_a+t_b)(q_b-p_0)}
     {t_a(q_b-q_s)}
\right)>0.
\end{equation}
Its maximum is
\begin{equation}\label{eq:bounce-maximum}
P_T(k^\star)
=q_s+\frac{t_b}{t_a+t_b}(q_b-q_s)
\left(
\frac{t_a(q_b-q_s)}
     {(t_a+t_b)(q_b-p_0)}
\right)^{t_a/t_b}.
\end{equation}
If $p_0\ge p_{\mathrm{crit}}$, $P_T$ is strictly decreasing on $(0,\infty)$. Consequently, over $[\underline k,\overline k]$ the worst terminal pressure is attained at
\[
k_{\mathrm{wc}}=
\begin{cases}
\min\{\overline k,\max\{\underline k,k^\star\}\},
&p_0<p_{\mathrm{crit}},\\
\underline k,&p_0\ge p_{\mathrm{crit}}.
\end{cases}
\]
When $p_0<p_{\mathrm{crit}}$, checking only the two rate endpoints fails exactly when
\[
\underline k<k^\star<\overline k.
\]
If $k^\star$ coincides with an interval endpoint, the maximum remains an endpoint maximum. Any $k$ independent nondecreasing terminal penalty is maximised at the same rate, uniquely when it is strictly increasing on the attained pressure range.
\end{proposition}

\begin{proof}
Solving the two affine compartment equations successively gives \eqref{eq:bounce-terminal}. Differentiation yields
\[
P_T'(k)
=e^{-kt_a}\left[
(t_a+t_b)(q_b-p_0)e^{-kt_b}
-t_a(q_b-q_s)
\right].
\]
The bracket at $k=0$ equals
\[
t_b(q_b-p_0)-t_a(p_0-q_s),
\]
which is positive exactly when $p_0<p_{\mathrm{crit}}$. In that case the bracket is strictly decreasing and has the unique zero \eqref{eq:bounce-kstar}, proving strict unimodality. At the zero,
\[
e^{-k^\star t_b}
=\frac{t_a(q_b-q_s)}
       {(t_a+t_b)(q_b-p_0)}.
\]
Substitution in \eqref{eq:bounce-terminal} gives \eqref{eq:bounce-maximum}.

If $p_0\ge p_{\mathrm{crit}}$ but $p_0<q_b$, the bracket is nonpositive at zero and strictly decreases thereafter. If $p_0\ge q_b$, both terms in $P_T'(k)$ are negative. The interval and terminal penalty conclusions follow from these sign patterns.
\end{proof}

\begin{remark}[Bounce versus saturation]
\label{rem:bounce-versus-saturation}
If the compartment begins in shallow equilibrium, $p_0=q_s$, then
\[
P_T(k)=q_s+(q_b-q_s)e^{-kt_a}(1-e^{-kt_b}),
\qquad
k^\star=\frac1{t_b}\log\!\left(1+\frac{t_b}{t_a}\right).
\]
In half time coordinates $h=\log 2/k$, the worst compartment has
\[
h^\star=\frac{t_b\log 2}{\log(1+t_b/t_a)}.
\]
By contrast, at bottom saturation $p_0=q_b$,
\[
P_T(k)=q_s+(q_b-q_s)e^{-kt_a},
\]
which is strictly decreasing in $k$. Lower endpoint rate uncertainty is therefore correct in the saturation regime, while a genuine bounce can have an interior worst rate.

The same uncertain $k$ must be used to generate the loading accumulated during the bottom exposure and to propagate the decompression phase. Freezing a nominal $P^0$ while varying only the ascent rate removes this coupling and can miss the interior maximum. The proposition concerns terminal loading under a two level input. It does not identify the worst rate for the full running plus terminal risk of the ramped, multigas profile in \cref{sec:worked-example}. Nor is the proposition a robustness result for the worked instance as stated, its three rates, corresponding to $(h_1,h_2,h_3)=(5,20,80)$ min, are fixed. To quantify the enlargement needed before $h^\star=8.097263$ min matters, consider the log symmetric multiplicative bands
\[
\mathcal H_i(\rho)=[h_i/\rho,\rho h_i],
\qquad
\mathcal K_i(\rho)=[k_i/\rho,\rho k_i],
\qquad \rho\ge1,
\]
which are equivalent under $k_i=\log 2/h_i$. The critical half time belongs to the $i$th band exactly when
\[
\rho\ge\rho_i^\star
:=\max\left\{\frac{h^\star}{h_i},\frac{h_i}{h^\star}\right\},
\qquad
(\rho_1^\star,\rho_2^\star,\rho_3^\star)
=(1.619453,\,2.469970,\,9.879881).
\]
At equality $k^\star$ is still a band endpoint, it becomes a genuine interior uncertainty maximiser only for $\rho>\rho_i^\star$. Equivalently, $k^\star=0.085603$ min$^{-1}$ is to be compared with the fixed nominal rates
\[
(k_1,k_2,k_3)=(0.138629,\,0.034657,\,0.008664)
\ \text{min}^{-1}.
\]
Reaching $k^\star$ requires the 5 minute rate band to extend $38.25\%$ below its nominal rate, or the 20 and 80 minute bands to extend $147.00\%$ and $887.99\%$ above theirs. Thus the smallest common band that can expose the interior phenomenon must exceed the factor $1.619453$, and the fixed rate worked calculation has no such effect.
\end{remark}

\begin{corollary}[Finite critical rate reduction for segmented terminal loading]
\label{cor:critical-rates}
Let $0=t_0<t_1<\cdots<t_m=T$, and let the input be $q_j$ on $[t_{j-1},t_j)$. For one compartment with initial pressure $p_0$,
\begin{equation}\label{eq:segmented-rate}
P_T(k)
=q_m+\sum_{j=1}^{m-1}(q_j-q_{j+1})e^{-k(T-t_j)}
+(p_0-q_1)e^{-kT}.
\end{equation}
Unless $P_T$ is constant in $k$, its derivative has at most $m-1$ real zeros. Hence its maximum on a compact rate interval is found by checking at most $m+1$ candidates, the two endpoints and the interior stationary rates.

For $n$ compartments with rectangular, independent rate intervals, a componentwise nondecreasing terminal functional is maximised by choosing for each compartment one of its finite critical candidates that maximises its own terminal pressure. This is a conditional finite reduction once the compartmentwise uncertainty intervals have been specified. Distinct fixed compartment rates are not endpoints or candidates of a common uncertainty interval.
\end{corollary}

\begin{proof}
Variation of constants on each segment gives
\[
P_T(k)
=e^{-kT}p_0
+\sum_{j=1}^m q_j
\left(e^{-k(T-t_j)}-e^{-k(T-t_{j-1})}\right),
\]
which telescopes to \eqref{eq:segmented-rate}. Its derivative is a linear combination of at most $m$ exponentials with distinct real exponents. Such exponentials form a Chebyshev system, after factoring the exponential with smallest exponent, Rolle's theorem and induction show that a nonzero combination of $m$ terms has at most $m-1$ real zeros. The compact interval claim follows. Rectangularity permits the componentwise maximising rate choices to be combined in one terminal scenario.
\end{proof}

\begin{figure}[H]
\centering
\begin{tikzpicture}
\begin{axis}[
 width=0.76\linewidth,height=6.1cm,
 xmode=log,
 xlabel={compartment half time $h$ (min)},
 ylabel={terminal tissue pressure $P_T$ (bar)},
 xmin=1,xmax=320,ymin=0.7,ymax=3.2,
 xtick={1,2,5,10,20,50,100,200},
 grid=major,
 log ticks with fixed point,
 legend style={font=\small,at={(0.02,0.98)},anchor=north west},
 tick label style={font=\small},
 label style={font=\small}]
\addplot[very thick,blue,domain=1:320,samples=220]
 {0.740467+2.37*(1-2^(-25/x))*2^(-(10/3)/x)};
\addlegendentry{bounce: $p_0=q_s$}
\addplot[very thick,orange!85!black,domain=1:320,samples=220]
 {0.740467+2.37*2^(-(10/3)/x)};
\addlegendentry{saturation: $p_0=q_b$}
\addplot[only marks,mark=*,mark size=2.1pt,blue] coordinates
 {(5,2.186817) (20,1.964150) (80,1.188896)};
\addlegendentry{fixed Section~\ref{sec:worked-example} half times}
\addplot[dashed,red] coordinates {(8.097263,0.7) (8.097263,2.312524)};
\addplot[only marks,mark=diamond*,mark size=3pt,red]
 coordinates {(8.097263,2.312524)};
\node[anchor=south,font=\scriptsize,red]
 at (axis cs:8.097263,0.83) {critical $h^\star=8.097$};
\end{axis}
\end{tikzpicture}
\caption{Rate sensitivity after a finite bounce exposure. The two level comparison uses the surface and bottom air inputs of \cref{sec:worked-example}, a 25 minute bottom exposure, and a $30/9$ minute lower input phase. The bounce curve has a strict interior maximum, whereas the saturated curve is worst at the slow rate endpoint. The blue markers are the worked instance's fixed half times, not an uncertainty band. The dashed maximiser becomes an interior robust candidate only when a band strictly contains it, with the required widths quantified in \cref{rem:bounce-versus-saturation}. This is a terminal loading illustration of \cref{prop:bounce-rate}, not a robustness recomputation of the worked profile's ramped multigas integrated risk.}
\label{fig:bounce-rate}
\end{figure}

\section{Distinct inert species}\label{sec:multi-species}

Let $S=\{\Ntwo,\He\}$. For each compartment $i$ and species $s\in S$,
\[
\dot P_{i,s}(t)=k_{i,s}\bigl(F_s(g(t))(\Pa(z(t))-w)-P_{i,s}(t)\bigr),\qquad k_{i,s}>0,
\]
and define total inert pressure $P_i=\sum_{s\in S}P_{i,s}$. Ceilings $M_i(z)$ and penalties $\phi_i(S_i)$ are unchanged, with $S_i=(P_i-M_i(z))_+/M_i(z)$.

\begin{proposition}[Structure preserved]\label{prop:multi-structure}
Under the multi-species dynamics above, the compactness and relaxed existence part of \cref{lem:existence}, together with \cref{lem:grid,lem:transit,prop:polish}, remains valid after enlarging the state. The staged density conclusion of \cref{lem:bangbang} also remains valid. The exact gas envelope reduction does not extend in general, two gases can trade lower nitrogen input against lower helium input, and species specific rates prevent a scalar pointwise ordering. Pure gas controls remain dense in the relaxed class by chattering, but exact pure attainment and the hard cap purification conclusion require additional structure or positive cap slack.
\end{proposition}

Per-transition update cost becomes $c_{\mathrm{upd}}(n,|S|)=\Theta(n|S|)$; the constants in the a-priori error bounds change only through the enlarged state dimension and species specific rates.

\section{Online value functions and dynamic programming}\label{sec:online-dpp}

This section supplies the value function justification behind the finite recursions; it is complementary to prior receding horizon decompression implementations \cite{Feng2012RecedingHorizon,DiMuro2023Adaptive}.

Fix $\lambda>0$. Throughout this section we work with the bang-bang restriction on the vertical rate $u\in\{0,-\dot z_{\max}\}$ and monotone $z(\cdot)$, consistent with \cref{ass:kin} and justified for staged $\varepsilon$-optimal profiles by \cref{lem:bangbang}. Every infimum below inherits the first surface hit and terminal-transit rules of \cref{ass:kin}.

\subsection{State value and basic bounds}

Define the running cost
\[
\ell(z,P,g):=1+\lambda\sum_{i=1}^n\phi_i\!\left(\frac{(P_i-M_i(z))_+}{M_i(z)}\right),
\]
and the online value from $x=(z,P)\in[0,Z_{\max}]\times\R_+^n$ by
\begin{equation}\label{eq:V-def}
\begin{aligned}
V_\lambda(z,P):=\inf\Bigg\{&
\int_0^T\ell(z(t),P(t),g(t))\,dt+\lambda\Psi(P(T)):\\
&\dot z=u(t)\in\{0,-\dot z_{\max}\}\ \text{a.e.},\quad
\dot P_i=k_i(P_\infty(g,z)-P_i),\\
&g(t)\in\mathcal G(z(t))\ \text{a.e.},\quad
z(0)=z,\ P(0)=P,\ T=\inf\{t\ge0:z(t)=0\}\Bigg\}.
\end{aligned}
\end{equation}
Feasibility from any $x$ holds by following $u=-\dot z_{\max}$ and applying \cref{lem:measurable-selection} for $g$. The regularity estimates below are local, so fix a compact pressure bound $P_{\mathrm{bd}}<\infty$ and work on
\[
\mathcal K:=[0,Z_{\max}]\times[0,P_{\mathrm{bd}}]^n.
\]
Let
\begin{equation}\label{eq:defs}
\begin{aligned}
F_I^{\max}&:=\max_{g\in\mathcal G}F_I(g),
&k_{\min}&:=\min_i k_i,
&k_{\max}&:=\max_i k_i,\\
M_{\min}&:=\min_i\inf_{z\in[0,Z_{\max}]}M_i(z)
=\min_i(a_i+b_i\Po)>0.
\end{aligned}
\end{equation}
On trajectories starting in $\mathcal K$, pressures remain bounded by
\[
\bar P:=\max\{P_{\mathrm{bd}},F_I^{\max}(\Po+\gamma Z_{\max})\}.
\]
Consequently $S_i\in[0,\bar S]$ with $\bar S:=\max\{0,\bar P/M_{\min}-1\}$, and each $\phi_i$ is Lipschitz on $[0,\bar S]$ with constant $L_{\phi,i}$. Set
\begin{equation}\label{eq:ell-bar}
\bar\ell:=1+\lambda\sum_{i=1}^n\phi_i(\bar S),
\qquad
\bar\Psi:=\max_{P\in[0,\bar P]^n}\Psi(P),
\qquad
T_{\mathrm{nd}}(z):=\frac{z}{\dot z_{\max}},
\qquad
T_{\mathrm{hor}}:=\bar\ell\frac{Z_{\max}}{\dot z_{\max}}
+\lambda\bar\Psi.
\end{equation}
Then
$V_\lambda(z,P)\le\bar\ell T_{\mathrm{nd}}(z)+\lambda\bar\Psi
\le T_{\mathrm{hor}}$
for the no hold ascent comparison policy, and every optimal horizon $T^\star$ satisfies $T^\star\le V_\lambda(z,P)\le T_{\mathrm{hor}}$ since $\ell\ge1$.

\subsection{Small horizon DPP and measurable selectors}

Let $h_{\max}(z):=z/\dot z_{\max}$ and consider any $h\in(0,h_{\max}(z)]$ when $z>0$. At the boundary, $V_\lambda(0,P)=\lambda\Psi(P)$. Concatenation of feasible arcs is permitted \cite{yong2022stochastic} by \cref{ass:kin,ass:comp}.

\begin{theorem}[Dynamic programming principle]\label{thm:dpp}
For every state $x=(z,P)$ with $z>0$ and every $h\in(0,h_{\max}(z)]$,
\begin{equation}\label{eq:dpp}
V_\lambda(z,P)=\inf_{\substack{u(\cdot)\in\{0,-\dot z_{\max}\}\ g(\cdot)\in\calG(z(\cdot))}}
\left\{\int_0^h\ell(z(t),P(t),g(t))\,dt+V_\lambda\bigl(z(h),P(h)\bigr)\right\},
\end{equation}
where $(z(\cdot),P(\cdot))$ solves the dynamics from $(z,P)$ under $(u,g)$ on $[0,h]$. On the finite state and action discretisation used in \cref{sec:algorithms}, an $\varepsilon$-minimising selector exists. More generally, a Borel selector follows when the one step costs and transitions and the value function are Borel.
\end{theorem}

\begin{proof}
The upper bound follows by concatenating any admissible $(u,g)$ on $[0,h]$ with an $\varepsilon$-optimal tail from $(z(h),P(h))$ and then letting $\varepsilon\downarrow0$. The lower bound follows by truncating an $\varepsilon$-optimal complete control at $h$ and using the definition of the tail value. On a finite discretisation the selector is obtained by tie breaking among finitely many actions; the stated Borel extension is the standard finite valued measurable selection argument.
\end{proof}

Fix a uniform nonterminal step $h>0$ on a finite state/action discretisation and a one step tolerance $\varepsilon_s\in[0,h)$. Repeatedly apply an $\varepsilon_s$-minimising selector. Since every nonterminal one step cost is at least $h$,
\[
V_\lambda(x^+)\le V_\lambda(x)-(h-\varepsilon_s).
\]
Thus the feedback cannot hold forever and reaches the terminal state after at most
\[
N\le
\left\lceil\frac{V_\lambda(z_0,P_0)}{h-\varepsilon_s}\right\rceil
\le
\left\lceil\frac{T_{\mathrm{hor}}}{h-\varepsilon_s}\right\rceil
\]
steps. Its cost $J$ satisfies
\[
J\le V_\lambda(z_0,P_0)+N\varepsilon_s,
\]
by summing the one step inequalities and telescoping the value terms. Regularity estimates for the value function, including flow sensitivity, an explicit integrand Lipschitz constant, and the risk capped DPP, are deferred
to \cref{app:value-regularity}.

\section{Conclusion}

The unbudgeted aggregate model is more rigid than its original finite menu description suggested. Minimum inert feasible gas dominates every pure and relaxed gas policy along a fixed depth path, so relaxed existence purifies exactly and the gas branching factor becomes one (\cref{thm:gas-envelope,cor:gas-pruning}). This does not say that practical gas planning is trivial, it says that gas planning is degenerate in an aggregate model that places no cumulative price on oxygen. The dose state in \eqref{eq:oxygen-budget} restores the central tissue loading/oxygen exposure trade off. On a fixed depth path its value is convex in the available dose, its multiplier is an exact oxygen shadow price, and \cref{cor:oxygen-switching} gives the marginal inequality deciding when lower inert loading is worth the additional exposure. Likewise, zero inert consolidation at the deepest feasible stop is a diagnostic of the omitted depth sensitive resource (\cref{thm:zero-inert,rem:oxygen-diagnostic}). Maximal ascents and holds form a cost dense normal form, and the explicit sufficient bound in \cref{thm:no-stop} identifies a genuine no stop phase at small risk prices. Its evaluation in the worked instance places $\lambda=100$ thousands of times above the certified phase, making clear that the global constant is qualitative rather than predictive there. Monotone ascent now has a positive structural theorem rather than a disclaimer. The continuous rearrangement in \cref{thm:endpoint-monotone} preserves time, depth occupation, the rate cap, terminal transit, and state independent exposure while lowering every terminal tissue pressure, together with the gas envelope it derives a monotone optimum for the aggregate terminal model (\cref{cor:aggregate-endpoint-monotone}).  The safe off gassing exchange in \cref{prop:safe-offgas-exchange} extends the order to a nontrivial fixed-block running risk regime, with transit effects explicitly outside that comparison. Outside the rearrangement theorem's hypotheses, \cref{prop:monotonicity-impossible} permits a deeper than start excursion and constructs a one compartment integrated-stress value gap in which re-descent strictly beats every monotone profile.  Thus the unrestricted bidirectional class cannot generally be reduced to monotone ascent; the full model retains the operational constraint.

The main local result is not the generic Clarke condition. It is the exact dwell switching identity in \cref{thm:dwell-switching}, an added minute is valuable only when its reduction in suffix risk pays both its instantaneous risk and the time price. The identity yields a one pass gradient algorithm, an exact stop screening rule, and dominance of purely on gassing holds. The closed form one compartment problem and the worked multi-compartment instance show these mechanisms directly. In the latter, terminal closure lengthens the consolidated 6m oxygen hold from 2.03 to 6.20 minutes; it does not restore a 3m hold, which remains dominated in the unbudgeted zero inert model.

The algorithmic theory has matching exact and approximate boundaries. \cref{thm:state-obstruction} proves that clipped oversaturation is not a sufficient Markov state, even when two labels both have zero clipped load, whereas full tissue pressure supports safe dominance. The one sided construction in \cref{thm:cap-enclosure} then preserves a hard cap exactly; nearest state or downward rounded schemes incur the complete error $O(\Delta z+m\delta+E\delta_P+E\Delta r)$ (\cref{cor:product-grid-dp,thm:apriori}).  Exact Pareto label enumeration can remain superpolynomial, \cref{thm:compartment-hardness} proves NP-hardness using a single actual compartment flow, and \cref{prop:exponential-labels} gives $2^m$ exact nondominated labels with one tissue and bounded horizon. Approximation is nevertheless tractable in the fixed physiology regime, \cref{thm:fixed-n-fptas} gives a scalarised FPTAS under explicit polynomial conditioning, and \cref{cor:cap-fptas} gives the exact cap counterpart under risk repair.  The $K^n$ product grid keeps the variable dimension exponential dependence explicit.  The frontier duality theorem separately identifies which capped points scalarisation recovers (\cref{thm:frontier-duality}).

The remaining limitations are substantive. The aggregate treatment of nitrogen and helium creates the strong gas dominance and zero inert consolidation effects; species specific kinetics restore genuine gas trade offs (\cref{prop:multi-structure}). The base computations do not enforce the oxygen exposure extension, bubble dynamics, CO$_2$, workload, gas supply, or a calibrated injury probability. The uncertainty endpoint theorem is an exact saturation decompression result (and conservative supersaturated extension), including the physically important equality $P_i^0=q(0)$. In \cref{ex:saturation-endpoint-check}, the upper input/lower rate endpoint gives $\sup_\theta R_\theta=0.166520721$, matching the independent corner audit. It does not cover the mixed on/off gassing bounce exposure in \cref{sec:worked-example}, for which multiple scenarios are required (\cref{thm:robust-endpoint}). The sharp contrast is now explicit, even a two level square bounce can have a unique interior worst rate (\cref{prop:bounce-rate}), while an arbitrary segmented terminal input has a finite critical rate set (\cref{cor:critical-rates}). These results concern terminal loading and do not turn the full ramped integrated risk problem back into an endpoint calculation. The worked half times $(5,20,80)$ min are fixed, their rate plot is not a robust recomputation. The interior maximiser enters the log symmetric band only beyond width $1.619453$ (\cref{rem:bounce-versus-saturation}).

\bibliographystyle{alpha}
\bibliography{biblio}

\appendix
\section{Value function regularity and capped dynamic programming}\label{app:value-regularity}
The main text uses only the one step DP principle and finite action selectors. This appendix records local regularity estimates for the scalarised value, the analogous DPP for the risk capped value, and the additional repair condition needed before capped Lipschitz continuity can be claimed.

\subsection{Flow and integrand sensitivity}

We work on the compact set $\mathcal K$ fixed above, with pressure bound enlarged to $\bar P$ when necessary.

\begin{lemma}[Flow sensitivity]\label{lem:flow}
Fix a measurable control pair $(u,g)$ on $[0,T]$ and two initial states $(z,P),(\wt z,\wt P)\in\mathcal K$ for which the same open loop pair is feasible. Let $(z_t,P_t)$ and $(\wt z_t,\wt P_t)$ be the corresponding trajectories. Then, before either trajectory is stopped at the surface,
\[
|z_t-\wt z_t|=|z-\wt z|,
\]
and for all $t\in[0,T]$,
\[
\|P_t-\wt P_t\|_1\le e^{-k_{\min}t}\|P-\wt P\|_1+C_\infty(1-e^{-k_{\min}t})|z-\wt z|,
\]
with $C_\infty:=n(k_{\max}/k_{\min})F_I^{\max}\gamma$.
\end{lemma}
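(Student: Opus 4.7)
The plan is to treat the depth and tissue components separately, since the two trajectories share the same measurable control pair $(u,g)$ on $[0,T]$. For the depth component $\dot z_t = u(t) = \dot{\wt z}_t$ almost everywhere, so $z_t - \wt z_t \equiv z - \wt z$ as a function of $t$ and the first identity $|z_t - \wt z_t| = |z - \wt z|$ is immediate.

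For the tissue component I would subtract the two compartment ODEs to obtain, for each $i\in\nn$,
\[
\frac{d}{dt}\bigl(P_i(t) - \wt P_i(t)\bigr) = -k_i\bigl(P_i(t) - \wt P_i(t)\bigr) + k_i\bigl(P_\infty(g(t),z_t) - P_\infty(g(t),\wt z_t)\bigr).
\]
Since $P_\infty(g,z) = F_I(g)(\Po + \gamma z - w)$ is affine in $z$ with slope $F_I(g)\gamma \le F_I^{\max}\gamma$, and $|z_t - \wt z_t| \equiv |z - \wt z|$ by the depth step, the forcing term is dominated pointwise by $k_i F_I^{\max}\gamma\,|z - \wt z|$. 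Applying the variation-of-constants formula to this scalar linear ODE and taking absolute values yields
\[
|P_i(t) - \wt P_i(t)| \le e^{-k_i t}|P_i(0) - \wt P_i(0)| + F_I^{\max}\gamma\,|z-\wt z|\,(1 - e^{-k_i t}).
\]

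Summing over $i\in\nn$, the homogeneous piece is bounded by $e^{-k_{\min} t}\|P-\wt P\|_1$ via $e^{-k_i t} \le e^{-k_{\min} t}$. For the inhomogeneous piece I need to replace $1 - e^{-k_i t}$ by a common decay profile anchored at $k_{\min}$; the key elementary inequality is that for any $k \ge k_{\min} > 0$ and $t \ge 0$,
\[
1 - e^{-k t} \le \frac{k}{k_{\min}}\bigl(1 - e^{-k_{\min} t}\bigr),
\]
which I would verify by noting that the right-minus-left side vanishes at $t=0$ and has derivative $k(e^{-k_{\min} t} - e^{-k t}) \ge 0$ since $k \ge k_{\min}$. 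Applied with $k = k_i$ and summed, one gets $\sum_i (1 - e^{-k_i t}) \le (n k_{\max}/k_{\min})(1 - e^{-k_{\min} t})$, which produces exactly the constant $C_\infty = n(k_{\max}/k_{\min})F_I^{\max}\gamma$ in the stated bound.

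The only real obstacle is the choice of that last inequality: the naive bound $1 - e^{-k_i t} \le 1$ would suffice for the norm estimate in isolation but would leave the $|z - \wt z|$ term failing to vanish at $t = 0$, which breaks the short-horizon regime needed by the dynamic programming principle in \cref{thm:dpp} and by any subsequent Lipschitz-in-state analysis of $V_\lambda$. Absorbing the extra $k_i$ into the factor $k_{\max}/k_{\min}$ preserves the $O(t)$ behaviour of the forcing contribution as $t \downarrow 0$ while matching the exponential decay of the homogeneous term, which is precisely the form required downstream.
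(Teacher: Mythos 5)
Your proof is correct and follows the same core strategy the paper sketches: subtract the linear compartment ODEs, bound the depth-coupling term by $F_I^{\max}\gamma\,|z-\wt z|$ using affineness of $P_\infty$ in $z$, and combine across compartments. The only difference is the order of operations. The paper sums the differential inequalities over $i$ first, uses $\sum_i k_i|d_i|\ge k_{\min}\sum_i|d_i|$ and $\sum_i k_i\le nk_{\max}$, and then applies Gr\"onwall once to $\|P_t-\wt P_t\|_1$, which delivers the constant $C_\infty=n(k_{\max}/k_{\min})F_I^{\max}\gamma$ directly. You instead solve each scalar inequality exactly via variation of constants and then sum, which forces you to prove the auxiliary inequality $1-e^{-kt}\le (k/k_{\min})(1-e^{-k_{\min}t})$ for $k\ge k_{\min}$ to align the different decay profiles; your verification of that inequality is correct. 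Both routes land on the same bound, and your remark at the end about why the crude bound $1-e^{-k_i t}\le 1$ would ruin the $O(t)$ behaviour at $t=0$ (which \cref{thm:Vlip} needs) is exactly the right diagnostic --- but note the paper's sum-first route sidesteps the issue automatically, so if you want the shortest proof it is the more economical path.
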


\begin{proof}
The depth identity is immediate from using the same rate control. For $P$, subtract the linear ODEs, use
\[
|P_\infty(g,z)-P_\infty(g,\wt z)|\le F_I^{\max}\gamma|z-\wt z|,
\]
sum over $i$, and apply Gr\"onwall's inequality.
\end{proof}

\begin{lemma}[Integrand Lipschitz bound]\label{lem:ell}
Let
\[
M_{\max}:=\max_i\sup_{z\in[0,Z_{\max}]}M_i(z)=\max_i(a_i+b_i(\Po+\gamma Z_{\max})),
\qquad L_{M,i}:=b_i\gamma.
\]
Then for all $(z,P),(\wt z,\wt P)\in\mathcal K$ and $g\in\calG$,
\[
|\ell(z,P,g)-\ell(\wt z,\wt P,g)|\le L_\ell\bigl(|z-\wt z|+\|P-\wt P\|_1\bigr),
\]
where one may take
\[
L_\ell:=\lambda\max\!\left\{
\frac1{M_{\min}}\sum_{i=1}^nL_{\phi,i},\quad
\sum_{i=1}^nL_{\phi,i}\left(\frac{L_{M,i}}{M_{\min}}+\frac{(\bar P+M_{\max})L_{M,i}}{M_{\min}^2}\right)
\right\}.
\]
\end{lemma}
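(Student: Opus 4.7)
The plan is to reduce the claim to a coordinate-wise Lipschitz estimate on the normalised oversaturation $S_i(z,P_i):=(P_i-M_i(z))_+/M_i(z)$, which on the reachable compact set $\mathcal K$ takes values in $[0,\bar S]$; there $\phi_i$ is Lipschitz with constant $L_{\phi,i}$ by Assumption \ref{ass:phi}. Since the constant $1$ in $\ell$ cancels and $g$ does not appear outside that constant, this immediately gives
\[
|\ell(z,P,g)-\ell(\wt z,\wt P,g)|\ \le\ \lambda\sum_{i=1}^n L_{\phi,i}\,\bigl|S_i(z,P_i)-S_i(\wt z,\wt P_i)\bigr|.
\]
I would then split each term via triangle inequality into a pure-$P$ piece $|S_i(z,P_i)-S_i(z,\wt P_i)|$ at fixed $z$ and a pure-$z$ piece $|S_i(z,\wt P_i)-S_i(\wt z,\wt P_i)|$ at fixed tissue pressure, and bound each separately.

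For the pure-$P$ piece, the map $P_i\mapsto (P_i-M_i(z))_+/M_i(z)$ is $1/M_i(z)\le 1/M_{\min}$-Lipschitz because $a\mapsto a_+$ is $1$-Lipschitz and the denominator is uniformly bounded below by $M_{\min}$. Summing the $L_{\phi,i}$-weighted bounds over $i$ produces $\bigl(\sum_i L_{\phi,i}\bigr)\|P-\wt P\|_1/M_{\min}$, i.e.\ exactly the first entry of the max in $L_\ell$ paired with $\|P-\wt P\|_1$.

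For the pure-$z$ piece, I would work with the smooth quotient $(\wt P_i-M_i(z))/M_i(z)$ and apply the quotient rule \emph{without} simplifying, which gives $\partial_z = -M_i'(z)/M_i(z) - (\wt P_i-M_i(z))M_i'(z)/M_i(z)^2$. Using $|M_i'(z)|=L_{M,i}$, $M_i(z)\ge M_{\min}$, and the crude bound $|\wt P_i-M_i(z)|\le \bar P+M_{\max}$ term by term yields the second entry of the max. The $1$-Lipschitz property of $a\mapsto a_+$ then lets this smooth-interior derivative bound pass to $S_i(\cdot,\wt P_i)$ globally. Taking the maximum of the two coefficients packages the $z$- and $P$-contributions into a single $L_\ell$ multiplying $|z-\wt z|+\|P-\wt P\|_1$, as claimed.

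The only delicate step is the non-smoothness of $(\cdot)_+$ at the kink $\wt P_i=M_i(z)$, which forbids a naive mean-value argument on the whole of $[z,\wt z]$. Because $M_i$ is affine in $z$, the equation $\wt P_i=M_i(z^\ast)$ has at most one solution on $[z,\wt z]$, so I would split the interval at $z^\ast$ and apply the interior derivative bound on each smooth side, using the identity $(a)_+=\int_0^a \mathbf 1_{\{s>0\}}\,ds$ to see that no extra contribution appears at the kink. Everything else is bookkeeping to collect constants.
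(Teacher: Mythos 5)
Your proof is correct and follows essentially the same route as the paper: split the variation of the smooth quotient $(P_i-M_i(z))/M_i(z)$ into a $P$-piece and a $z$-piece, bound each (the $P$-derivative by $1/M_{\min}$, the $z$-derivative by the unsimplified quotient rule plus $|\wt P_i-M_i(z)|\le\bar P+M_{\max}$), then pass through the $1$-Lipschitzness of $(\cdot)_+$ and of $\phi_i$ on $[0,\bar S]$ and take the max of the two coefficients. The closing paragraph on splitting the $z$-interval at the kink of $(\cdot)_+$ is superfluous---applying $|a_+-b_+|\le|a-b|$ to the unclipped quotient, which you already invoke, removes the issue entirely---but it is not incorrect.
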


\begin{proof}
For $S_i=(P_i-M_i(z))_+/M_i(z)$, the map $(z,P_i)\mapsto(P_i-M_i(z))/M_i(z)$ is Lipschitz on $\mathcal K$ because $M_i\ge M_{\min}>0$. The positive part map is $1$ Lipschitz and each $\phi_i$ is $L_{\phi,i}$ Lipschitz on $[0,\bar S]$. Summing the coordinate bounds gives the stated constant. The term $+1$ in $\ell$ cancels in the difference.
\end{proof}

\subsection{Lipschitz regularity of the value function}

\begin{theorem}[Local Lipschitz continuity of $V_\lambda$]\label{thm:Vlip}
Let $\mathcal K'\subset\mathcal K$ be compact and assume robust feasibility along the complete comparison trajectories, every open loop gas used by an $\varepsilon$-optimal comparison policy from $\mathcal K'$ remains feasible along the full perturbed trajectory, up to the surface, for all sufficiently small initial depth perturbations. Then $V_\lambda$ is locally Lipschitz in $(z,P)$ on $\mathcal K'$. Let $L_\Psi$ be a Lipschitz constant of $\Psi$ on $[0,\bar P]^n$ and put
\[
F_{\max}:=\sum_{i=1}^n k_i(\bar P+F_I^{\max}(\Po+\gamma Z_{\max})).
\]
For nearby $(z,P),(\wt z,\wt P)\in\mathcal K'$ one may take
\[
|V_\lambda(z,P)-V_\lambda(\wt z,\wt P)|\le C_1|z-\wt z|+C_2\|P-\wt P\|_1,
\]
with
\[
C_2:=\frac{L_\ell}{k_{\min}}+\lambda L_\Psi,
\qquad
C_1:=L_\ell(1+C_\infty)T_{\mathrm{hor}}
+\frac{\bar\ell}{\dot z_{\max}}
+\lambda L_\Psi\left(C_\infty+\frac{F_{\max}}{\dot z_{\max}}\right).
\]
Across gas feasibility boundaries one should read the conclusion piecewise, global Lipschitz continuity need not follow without an additional slack or regularity assumption.
\end{theorem}

\begin{proof}
Suppose first that $\wt z\ge z$. Fix $\varepsilon>0$ and choose an $\varepsilon$-optimal admissible pair $(u^\star,g^\star)$ for $(z,P)$ with horizon $T\le T_{\mathrm{hor}}$ that reaches the surface at time $T$. By robust feasibility, the same open-loop gas choices remain feasible from $(\wt z,\wt P)$ for nearby states along the complete comparison trajectory. Running this policy leaves a residual depth at most $\wt z-z$ at time $T$, which can be cleared by a no-hold ascent with running cost at most $\bar\ell(\wt z-z)/\dot z_{\max}$. Hence
\[
V_\lambda(\wt z,\wt P)-V_\lambda(z,P)
\le\int_0^T L_\ell\bigl(|\wt z-z|+\|\wt P_t-P_t\|_1\bigr)\,dt
+\frac{\bar\ell}{\dot z_{\max}}|\wt z-z|
+\lambda L_\Psi\|\wt P_{\mathrm{term}}-P_{\mathrm{term}}\|_1
+\varepsilon.
\]
Using \cref{lem:flow},
\[
\int_0^T\|\wt P_t-P_t\|_1\,dt
\le\frac1{k_{\min}}\|\wt P-P\|_1+C_\infty T_{\mathrm{hor}}|\wt z-z|.
\]
At time $T$, the state difference is at most $\|\wt P-P\|_1+C_\infty|\wt z-z|$. Clearing the residual depth changes the terminal state by at most $F_{\max}|\wt z-z|/\dot z_{\max}$. These estimates bound the terminal term by the $\lambda L_\Psi$ contributions in $C_1$ and $C_2$. Combining and letting $\varepsilon\downarrow0$ gives the one sided bound. If $\wt z<z$, swap the roles of the two states and stop the shallower comparison trajectory when it reaches the surface; the same residual depth estimate applies. This yields the symmetric Lipschitz estimate.
\end{proof}

\subsection{Risk capped DPP and the need for a repair condition}
\label{subsec:cap-dpp}

For the capped problem in \cref{prob:def}, augment the state with the remaining budget $r\in[0,\rho]$ and define
\begin{equation}\label{eq:W-def}
\begin{aligned}
W(z,P,r):=\inf\Bigg\{&\int_0^T1\,dt:\\
&\dot z\in\{0,-\dot z_{\max}\},\quad
\dot P_i=k_i(P_\infty-P_i),\quad g(t)\in\mathcal G(z(t)),\\
&\int_0^T\sum_i\phi_i(S_i(t))\,dt+\Psi(P(T))\le r,
\quad T=\inf\{t\ge0:z(t)=0\}\Bigg\}.
\end{aligned}
\end{equation}
Let $R_h(z,P,u,g):=\int_0^h\sum_i\phi_i(S_i(t))\,dt$ denote the risk accrued over $[0,h]$ under $(u,g)$.

\begin{theorem}[DPP for the cap]\label{thm:dpp-cap}
For any $h\in(0,h_{\max}(z)]$ with $z>0$,
\begin{equation}\label{eq:dpp-cap}
W(z,P,r)=\inf_{\substack{u(\cdot)\in\{0,-\dot z_{\max}\}\ g(\cdot)\in\calG(z(\cdot))}}
\left\{h+W\bigl(z(h),P(h),r-R_h(z,P,u,g)\bigr)\right\},
\end{equation}
with the convention $W(\cdot,\cdot,\tilde r)=+\infty$ if $\tilde r<0$. At the terminal boundary,
\[
W(0,P,r)=
\begin{cases}
0,&\Psi(P)\le r,\\
+\infty,&\Psi(P)>r.
\end{cases}
\]
Moreover $W$ is nonincreasing in $r$.
\end{theorem}

\begin{proof}
The DPP follows by the same truncation/concatenation argument as
\cref{thm:dpp}, tracking the running budget $r$; the terminal functional is enforced by the displayed boundary condition. Monotonicity in $r$ is immediate.
\end{proof}

\begin{remark}[Regularity needs a risk repair condition]
\label{rem:cap-repair}
No unconditional local Lipschitz claim for $W$ follows merely by staying away from a named budget boundary. A sufficient condition is a local risk repair error bound, there are $K,\delta_0>0$ such that any nearby schedule with risk at most $r+\delta$, $0\le\delta\le\delta_0$, can be modified to meet risk $r$ at an added time of at most $K\delta$. Combined with robust gas feasibility, the flow estimates of \cref{lem:flow,lem:ell} then imply local Lipschitz continuity. Without such a condition, the valid conclusion is only bicriteria continuity with an $O(\|(z,P)-(\widetilde z,\widetilde P)\|)$ relaxation of the cap.
\end{remark}

The constants $L_\ell,L_\Psi,C_\infty,\bar\ell,T_{\mathrm{hor}}$ determined above make the a-priori discretisation bounds in \cref{thm:apriori} explicit on a chosen compact $\mathcal K'\subset\mathcal K$. In particular, $L_{\mathrm{stop}}$ and $L_{\mathrm{dwell}}$ are controlled by the Lipschitz constants of the running and terminal risk together with the uniform horizon bound in \cref{eq:ell-bar}.
\end{document}